\documentclass[11pt]{article}
\usepackage[a4paper,margin=24mm]{geometry}
\usepackage{amsmath,amssymb,amsthm,mathtools,mathrsfs,array,booktabs,longtable}
\usepackage[hidelinks]{hyperref}
\usepackage{microtype}

\newtheorem{theorem}{Theorem}[section]
\newtheorem{proposition}[theorem]{Proposition}
\newtheorem{lemma}[theorem]{Lemma}
\newtheorem{corollary}[theorem]{Corollary}
\theoremstyle{definition}

\theoremstyle{remark}
\newtheorem{remark}[theorem]{Remark}

\newcommand{\PP}{\mathbb P}
\newcommand{\QQ}{\mathbb Q}
\newcommand{\ZZ}{\mathbb Z}
\newcommand{\CC}{\mathbb C}
\newcommand{\OO}{\mathcal O}
\newcommand{\Li}{\operatorname{Li}}
\newcommand{\Gr}{\operatorname{Gr}}
\newcommand{\Sym}{\operatorname{Sym}}

\title{Mirror symmetry for gCICY threefolds (II):\\ $[3,-1]$~blocks, VGIT, and toric phases}
\author{Atsushi Kanazawa}
\date{}

\begin{document}
\maketitle

\begin{abstract}
The configuration matrices of generalized complete intersection Calabi--Yau (gCICY) threefolds contain negative entries, so their defining line bundles are not nef and the Batyrev--Borisov construction does not apply directly.
We study gCICY threefolds in $\PP^1\times B$ with a $[3,-1]$~block, that is, cut out by sections of $\OO(3)\boxtimes L$ and $\OO(-1)\boxtimes(L+R)$, where $B$ is a smooth projective fourfold and $2L+R=-K_B$.
Auxiliary variables turn the generalized section into a relative variation of GIT, whose opposite phase is the zero locus of a rank-3 bundle on $\PP^2\times B$; 
for general data the two phases are isomorphic or related by $\int_Bc_1(L)^4$ disjoint Atiyah flops.
Over products of projective spaces there are exactly 18 Calabi--Yau threefold configurations with a $[3,-1]$~block.
All of them except the non-Gorenstein case studied in the companion paper have Gorenstein toric nef phases.
Thus, for every other configuration in this classification, mirror symmetry is reduced after variation of GIT to ordinary Batyrev--Borisov duality, which gives the mirror Hodge numbers of the generalized phase itself.
We analyze four examples with a $[3,-1]$~block and one outside this class in detail.
For each we construct an explicit mirror and compute its fundamental period and low-order Picard--Fuchs system; 
the indicial algebras at the maximally unipotent boundary points are isomorphic to the rational even cohomology rings of the A-model phases.
\end{abstract}

\tableofcontents

%%%%%%%%%%%%%%%%%%%%%%%%%%%%%%%%%%%%%%%%%%%%%%%%%%%%%%%%%%%%%%%%%%%%%%%%%%%%%%%%%%%%%%%%%%%%%%
\section{Introduction}

The generalized complete intersection Calabi--Yau (gCICY) construction, introduced by Anderson, Apruzzi, Gao, Gray and Lee \cite{AAGGL}, 
extends the conventional CICY construction by allowing the later defining line bundles to have negative entries in the ambient product of projective spaces.
A negative entry does not stand for a polynomial of negative degree.
Instead, one first cuts out an intermediate variety, and a line bundle with no sections on the ambient space may acquire sections after restriction to it.
Such a section, which in general does not extend to the ambient space, is called a \emph{generalized section}
\cite{AAGGL}.  Garbagnati and van Geemen made this
mechanism explicit \cite{GG}.
The cohomological, fibration, physical and computational aspects of gCICYs have since been developed in \cite{BH,JiaLin,LPS,CuiGaoWang}.

A basic question is whether a generalized presentation describes a new kind of geometry, or instead hides a more conventional birational or toric model.
The question matters for mirror symmetry.
The Batyrev--Borisov construction applies to nef partitions of Gorenstein toric Fano varieties, that is, to complete intersections of Cartier nef divisors \cite{BB,BBH}, and the Hori--Vafa equations \cite{HV} give the corresponding torus mirrors.
A defining line bundle with a negative entry is not nef on the ambient space, so neither construction applies directly.
Berglund and H\"ubsch extended toric mirror symmetry in a different direction, to non-Fano Hirzebruch-type ambient spaces and Laurent models \cite{BHmirror,BHTyurin}.
Our approach is complementary: starting from the generalized configuration itself, we use auxiliary variables and variation of GIT to find ordinary nef or toric phases whenever they exist.

Our starting point is the gCICY threefold
\[
X_+=\left[\begin{array}{c|cc}
\PP^1&3&-1\\
\PP^4&2&3
\end{array}\right], \qquad (h^{1,1},h^{2,1})=(2,46),
\]
studied in the companion paper \cite{KanCompanion}.
There an auxiliary presentation shows that $X_+$ is related by a flop of 16 disjoint $(-1,-1)$-curves to a nef phase $X_-$ in a toric $\QQ$-Fano eightfold.
Because that eightfold is not Gorenstein, the Batyrev--Borisov construction is unavailable.
Instead, a mirror family is selected by the Hori--Vafa equations and given a smooth projective crepant model, with Hodge numbers $(46,2)$.
Its periods have two maximally unipotent boundary points, whose indicial algebras are isomorphic to the rational even cohomology rings of $X_-$ and $X_+$, respectively.

The present paper isolates the mechanism behind this exceptional case and determines how far it extends.
A \emph{$[3,-1]$~block} means a pair of columns whose degrees along a distinguished $\PP^1$-factor are $3$ and $-1$.
The Calabi--Yau threefold configurations considered in this paper have the form
\[
\left[\begin{array}{c|cc}
\PP^1&3&-1\\
B&L&L+R
\end{array}\right],\qquad 2L+R=-K_B,
\]
where $B$ is a smooth projective fourfold and $L$ and $R$ are line bundles on $B$.
More generally, a collection of $k$ columns whose degrees along a distinguished $\PP^1$-factor are $d_1,\ldots,d_k$, with $d_1\ge\cdots\ge d_{k-1}>0>d_k$, is called a
\emph{$[d_1,\ldots,d_k]$~block}.  For the Calabi--Yau threefold
configurations considered here, $\sum_{i=1}^k d_i=2$ and $\dim B=k+2$.
The paper \cite{KanIII} studies gCICYs with a $[4,-2]$~block, and Section~\ref{sec:beyond} contains configurations with a $[2,1,-1]$~block that reduce to ordinary CICYs.
Throughout, the row of the cubic $\PP^1$ is written first; when $B$ is a product of projective spaces, its factors follow in order of decreasing dimension.
Our first result (Theorem~\ref{thm:cubicblock}) is that the generalized section of such a block admits a universal syzygy resolution, which turns it into a relative variation of GIT for a $\CC^*$-action over $B$.
The opposite phase is the zero locus $X_-$ of a rank-3 bundle on $\PP^2\times B$.
If $L$ and $R$ are globally generated and the sections are general, then $X_+$ and $X_-$ are either isomorphic or related by the simultaneous Atiyah flop of
\[
N_{\rm flop}=\int_Bc_1(L)^4
\]
disjoint $(-1,-1)$-curves.
The number of flopping curves is therefore governed by the single class $c_1(L)$, not by the negative entry.
Over products of projective spaces there are exactly 18 Calabi--Yau threefold configurations with a $[3,-1]$~block.
Of these, 7 involve flops, with $N_{\rm flop}\in\{1,4,6,12,16,24,32\}$ (Proposition~\ref{prop:classification}).
When $R=0$, both phases are ordinary CICYs, namely the two small resolutions of a determinantal hypersurface (Proposition~\ref{prop:Rzero}).

For the nef phase we establish two kinds of results.
First, suppose $B$ is a product of projective spaces $\PP^{n_1}\times\cdots\times\PP^{n_k}$ with hyperplane classes $H_i$, and write $L=\sum_{i=1}^k a_iH_i$ and $R=\sum_{i=1}^k r_iH_i$.
The ambient space of $X_-$ is Gorenstein and the 5 defining classes form a Cartier nef partition exactly when $a_i\mid r_i$ whenever $a_i\ge2$ (Lemma~\ref{lem:stabilizers}).
In that case Batyrev--Borisov duality applies to $X_-$.
Since $X_+$ and $X_-$ are isomorphic or related by flops, they have the same Hodge numbers, and hence the Batyrev--Borisov mirror $Y$ of $X_-$ satisfies $h^{p,q}(Y)=h^{3-p,q}(X_+)$ for the generalized phase itself (Proposition~\ref{prop:BBduality}).
Among the 18 configurations, the condition fails only for the companion-paper example, whose nef phase lies in a toric ambient space with non-Gorenstein $\frac12(1,1,1,1,1)$ singularities.
Second, for an arbitrary base $B$ we give closed formulas for the cubic form, $c_2$, the Euler number and the Picard group of both phases, 
for the jumps across the flop, for the natural K3 and genus-one fibrations, and for the fundamental period of the nef phase (Section~\ref{sec:uniform} and Section~\ref{sec:fibperiod}).

We then analyze four examples with a $[3,-1]$~block in detail:
\begin{align}
X_{1,+}&=\left[\begin{array}{c|cc}
\PP^1&3&-1\\ \PP^3&2&2\\ \PP^1&1&1
\end{array}\right],
& (h^{1,1},h^{2,1})&=(3,55),\label{eq:X1intro}\\
X_{2,+}&=\left[\begin{array}{c|cc}
\PP^1&3&-1\\ \PP^3&2&2\\ \PP^1&0&2
\end{array}\right],
& (h^{1,1},h^{2,1})&=(3,31),\label{eq:X2intro}\\
X_{3,+}&=\left[\begin{array}{c|cc}
\PP^1&3&-1\\ \PP^4&1&4
\end{array}\right],
& (h^{1,1},h^{2,1})&=(2,86),\label{eq:X3intro}\\
X_{4,+}&=\left[\begin{array}{c|cc}
\PP^1&3&-1\\ \PP^2&1&2\\ \PP^2&1&2
\end{array}\right],
& (h^{1,1},h^{2,1})&=(3,42).\label{eq:X4intro}
\end{align}
They are chosen to exhibit different outcomes.
In the first, $R=0$, and by Proposition~\ref{prop:Rzero} both phases are ordinary CICYs of the same type, related by a $32$-curve flop.
In the second, the transition is an isomorphism.
The third reaches an anticanonical hypersurface in a smooth toric Fano fourfold after a single flop.
The fourth is symmetric in the two factors of its base and has a 6-curve flop to a nef phase with 3 genus-one fibrations, of indices 3, 1 and 1; the non-Gorenstein case, by contrast, has a single one, of index 4.

The auxiliary-variable mechanism is not restricted to $[3,-1]$~blocks, and in Section~\ref{sec:beyond} we treat configurations outside this class.
When a negative column is paired with a column that is linear in a $\PP^1$ factor, the generalized section can be eliminated altogether, and the configuration is an ordinary CICY in disguise (Lemma~\ref{lem:disguise}).
For instance,
\[
\left[\begin{array}{c|ccc}
\PP^1&1&-1&2\\ \PP^5&1&1&4
\end{array}\right]
\simeq
\left[\begin{array}{c|c}
\PP^1&2\\ \PP^3&4
\end{array}\right], \qquad
\left[\begin{array}{c|ccc}
\PP^1&1&-1&2\\ \PP^1&1&1&0\\ \PP^4&1&1&3
\end{array}\right]
\simeq
\left[\begin{array}{c|ccc}
\PP^1&0&0&2\\ \PP^1&1&1&0\\ \PP^4&1&1&3
\end{array}\right],
\]
with Hodge numbers $(2,86)$ and $(3,75)$.
When two negative columns act on two different $\PP^1$ factors, the same elimination produces a determinantal phase which is not a complete intersection.
Our fifth example,
\begin{equation}
X_{5,+}=\left[\begin{array}{c|cccc}
\PP^1&1&1&-1&1\\
\PP^1&1&1&1&-1\\
\PP^5&3&1&1&1
\end{array}\right], \qquad (h^{1,1},h^{2,1})=(3,81),
\label{eq:X5intro}
\end{equation}
is of this kind: it reduces directly, without a flop, to an anticanonical hypersurface in a smooth toric Fano fourfold.

Examples 1, 2, 3 and 5 appear in \cite{AAGGL}; Example 4 does not.
We derive the geometric descriptions and the Hodge numbers from the auxiliary presentations, except $h^{1,1}(X_2)=3$, for which we use \cite{AAGGL}.
Table~\ref{tab:mainexamples} summarizes the comparison.

\begin{table}[ht]
\centering
\small
\begin{tabular}{c|c|c|c|c}
 & $h(X)$ & transition & nef or toric phase & nef-side basis \\
\midrule
$X_1$ & $(3,55)$ & $32$-curve flop & ordinary CICY (both) & $(H,T,G)$, $G=2H{+}T{-}F$ \\
$X_2$ & $(3,31)$ & isomorphism & Gorenstein toric CI & $(H,G,T)$, $G=2H{-}F$ \\
$X_3$ & $(2,86)$ & $1$-curve flop & toric Fano hypersurface & $(H,G)$, $G=H{-}F$ \\
$X_4$ & $(3,42)$ & $6$-curve flop & smooth toric CI & $(H_1,H_2,G)$, $G=H_1{+}H_2{-}F$ \\
\midrule
$X_5$ & $(3,81)$ & direct reduction & toric Fano hypersurface & $(H,A_1,A_2)$ \\
\end{tabular}
\caption{The five detailed examples: four examples with a $[3,-1]$~block and, below the line, one outside this class.
Here $h(X)$ denotes $(h^{1,1}(X),h^{2,1}(X))$ and $F$ is the class of the cubic $\PP^1$ factor in the generalized presentation.
In Examples~1--4 the new $\PP^2$ class is $G=c_1(L)-F$, as in \eqref{eq:Gclass}.
The classes $A_1,A_2$ for Example~5 are defined in Section~\ref{sec:example5}.}
\label{tab:mainexamples}
\end{table}

For each nef or toric phase we write an explicit torus mirror and its fundamental period.
The period coefficients determine low-order Picard--Fuchs systems whose generic ranks, that is, the dimensions of their local solution spaces at general nonsingular points, are $8,8,6,8,8$.
These agree with the third Betti numbers
$b_3(Y)=2h^{2,1}(Y)+2$ of the corresponding mirrors.
We show by direct calculation that the indicial algebras at the maximally unipotent boundary points are isomorphic to the even cohomology rings of the corresponding A-model phases.
For the toric hypersurface examples we also check the Hodge numbers against Batyrev's lattice-point formulas for reflexive polytopes \cite{Batyrev}.

\paragraph{Conventions.}
We work over $\CC$.
A Calabi--Yau threefold $X$ is a smooth projective threefold with $K_X\sim0$ and $H^1(X,\OO_X)=H^2(X,\OO_X)=0$.
``General'' means on a dense Zariski-open subset of the relevant parameter space.

\paragraph{Acknowledgments.}
The author thanks Makoto Miura for valuable discussions.
Part of this work was inspired by interactions during the author's visit to National Taiwan University in spring 2026, whose hospitality is gratefully acknowledged.
This work is supported by JSPS Grants-in-Aid Kiban (C) 22K03296 and Kiban (A) 23H00083.

%%%%%%%%%%%%%%%%%%%%%%%%%%%%%%%%%%%%%%%%%%%%%%%%%%%%%%%%%%%%%%%%%%%%%%%%%%%%%%%%%%%%%%%%%%%%%%
\section{\texorpdfstring{gCICYs with a $[3,-1]$~block and the opposite GIT phase}{gCICYs with a [3,-1] block and the opposite GIT phase}}
\label{sec:cubicblock}

\subsection{The auxiliary construction}

\paragraph{Setting.}
Let $B$ be a smooth projective fourfold and let $L,R$ be line bundles on $B$ with
\begin{equation}
2L+R=-K_B
\label{eq:CYcondition}
\end{equation}
in additive notation.
When $B$ is a product of projective spaces, $X_+$ below is a gCICY in the sense of \cite{AAGGL}; for general $B$ we keep the same configuration notation, with the row of $B$ recording line bundles on $B$.
We write $F$ for the pullback to $\PP^1\times B$ of the hyperplane class of $\PP^1_{[u:v]}$ and use the same letter for its restrictions.
Take
\begin{equation}
P(u,v)=f_0u^3+f_1u^2v+f_2uv^2+f_3v^3\in H^0\bigl(\PP^1\times B,\OO(3)\boxtimes L\bigr), \qquad f_i\in H^0(B,L),
\label{eq:cubicP}
\end{equation}
and let $M=Z(P)\subset\PP^1\times B$.
The generalized phase is the zero locus
\[
X_+=Z_M(s),\qquad s\in H^0\bigl(M,\OO_{\PP^1}(-1)\boxtimes(L+R)|_M\bigr),
\]
that is, the gCICY with configuration
$\left[\begin{smallmatrix}\PP^1&3&-1\\ B&L&L+R\end{smallmatrix}\right]$.
Since $K_{\PP^1\times B}=\OO(-2)\boxtimes K_B$ and $\OO(3)\boxtimes L+\OO(-1)\boxtimes(L+R)=\OO(2)\boxtimes(2L+R)$, the condition \eqref{eq:CYcondition} is exactly the adjunction condition for $X_+$ to have trivial canonical bundle.
The section $s$ is the
\emph{generalized section} of the block: $\OO_{\PP^1}(-1)\boxtimes(L+R)$
has no nonzero sections on $\PP^1\times B$, and $s$ exists only after restriction to $M$.

\paragraph{The syzygy of the cubic.}
The cubic evaluation sequence on $\PP^1$ is
\begin{equation}
0\longrightarrow\OO(-1)^{\oplus3}
\xrightarrow{D(u,v)}\OO^{\oplus4}
\xrightarrow{(u^3,u^2v,uv^2,v^3)}\OO(3)
\longrightarrow0, \qquad
D(u,v)=\begin{pmatrix}
-v&0&0\\ u&-v&0\\ 0&u&-v\\ 0&0&u
\end{pmatrix}.
\label{eq:cubicsyzygy}
\end{equation}
It is exact because $u^3,v^3$ have no common zero and $D(u,v)$ has rank 3 everywhere.
Tensoring by $L$ and restricting to $M$, the condition $P=0$ says exactly that $\mathbf f=(f_0,f_1,f_2,f_3)^t$ lies in the image of $D(u,v)$.
Thus on $M$ there is a unique vector $y=(y_1,y_2,y_3)^t$ of sections of $\OO_{\PP^1}(-1)\boxtimes L$ with
\begin{equation}
D(u,v)y=\mathbf f.
\label{eq:auxgeneral}
\end{equation}
Since $D(u,v)$ is injective on every fiber, $y$ vanishes exactly on $\PP^1\times Z_B(f_0,\ldots,f_3)$.

\begin{lemma}\label{lem:allsections}
Assume that $f_0,\ldots,f_3$ are linearly independent and that $Z_B(f_0)$ is a reduced divisor; both hold for general $f_i$ if $h^0(B,L)\ge4$ and $L$ is globally generated.
Then $y_1,y_2,y_3$ form a basis of $H^0(M,\OO_{\PP^1}(-1)\boxtimes L|_M)$, and every generalized section has the form
\begin{equation}
s=\ell_1y_1+\ell_2y_2+\ell_3y_3, \qquad \ell_j\in H^0(B,R).
\label{eq:ellgeneral}
\end{equation}
\end{lemma}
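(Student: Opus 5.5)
The plan is to compute $H^0(M,\OO_{\PP^1}(-1)\boxtimes N|_M)$ for $N=L$ and $N=L+R$ from the Koszul resolution of $\OO_M$, and then check that the explicit sections $y_j$ from \eqref{eq:auxgeneral} realize it. Since $P$ is a section of $\OO(3)\boxtimes L$, the ideal sheaf of $M$ is $\OO(-3)\boxtimes L^{-1}$. Tensoring with $\OO(-1)\boxtimes N$ gives
\[
0\to \OO(-4)\boxtimes(N-L)\to \OO(-1)\boxtimes N\to \OO(-1)\boxtimes N|_M\to 0 .
\]
By K\"unneth all cohomology of $\OO(-1)\boxtimes N$ vanishes, because $H^\bullet(\PP^1,\OO(-1))=0$. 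So the connecting map gives an isomorphism
\[
H^0(M,\OO(-1)\boxtimes N|_M)\cong H^1(\PP^1,\OO(-4))\otimes H^0(B,N-L)\cong \CC^3\otimes H^0(B,N-L).
\]
For $N=L$ this space is $3$-dimensional, using that $B$ is connected. For $N=L+R$ it is $\CC^3\otimes H^0(B,R)$.

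Next I show that $y_1,y_2,y_3$ are linearly independent, which makes them a basis. Suppose $\sum c_jy_j=0$ on $M$ with constants $c_j$. Restrict to the locus $v=0$, $u=1$. There $P=f_0$, so this part of $M$ is $\{[1:0]\}\times Z_B(f_0)$, and \eqref{eq:auxgeneral} reads $y_1=f_1$, $y_2=f_2$, $y_3=f_3$ after trivializing $\OO(-1)$ at $[1:0]$. Hence $\sum_j c_jf_j$ vanishes on $Z_B(f_0)$. Because $Z_B(f_0)$ is a reduced divisor, $\sum_j c_jf_j$ is divisible by $f_0$, so $\sum_j c_jf_j=\lambda f_0$ with $\lambda\in H^0(B,\OO_B)=\CC$. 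Linear independence of $f_0,\dots,f_3$ then forces all $c_j=0$.

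For the second claim I use naturality rather than a direct injectivity argument. A direct argument stalls, since with $\ell_j\in H^0(B,R)$ one only gets $\sum_j\ell_jf_j=\lambda f_0$ with $\lambda\in H^0(B,R)$. Consider the multiplication map
\[
H^0(M,\OO(-1)\boxtimes L|_M)\otimes H^0(B,R)\to H^0(M,\OO(-1)\boxtimes(L+R)|_M).
\]
Multiplying the Koszul sequence for $N=L$ by a pulled-back section $\ell\in H^0(B,R)$ maps it to the sequence for $N=L+R$. The connecting maps are compatible with this, so under the isomorphisms above the multiplication map becomes
\[
H^1(\PP^1,\OO(-4))\otimes H^0(B,\OO_B)\otimes H^0(B,R)\to H^1(\PP^1,\OO(-4))\otimes H^0(B,R),
\]
which is an isomorphism. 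Combined with the basis property of the $y_j$, this shows that every $s$ is uniquely of the form \eqref{eq:ellgeneral}. Verifying this compatibility carefully, with the K\"unneth identifications, is the main point to check.

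Finally, the genericity claim. If $L$ is globally generated and $h^0(B,L)\ge4$, then general $f_0,\dots,f_3$ are linearly independent. By Bertini, a general $f_0$ has smooth, hence reduced, zero divisor. This divisor is nonempty unless $L$ is trivial, which is excluded by $h^0\ge4$.
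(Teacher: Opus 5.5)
Your proposal is correct and follows the paper's proof. The twisted Koszul sequence, together with the K\"unneth acyclicity of $\OO(-1)\boxtimes N$, identifies both spaces of sections through the connecting map. Naturality under multiplication by $\ell\in H^0(B,R)$ gives the isomorphism onto the generalized sections. Restricting to $\{v=0\}$ and using that $Z_B(f_0)$ is a reduced divisor gives the linear independence of the $y_j$. Your Bertini argument for the genericity clause, which the paper only asserts, is also fine.
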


\begin{proof}
For a line bundle $N$ on $B$, the Koszul sequence of $M$ twisted by $\OO(-1)\boxtimes N$ is
\[
0\to\OO(-4)\boxtimes(N-L)\xrightarrow{\ \times P\ }\OO(-1)\boxtimes N\to
\OO(-1)\boxtimes N|_M\to0 .
\]
The middle term has no cohomology by the K\"unneth formula, since $\OO_{\PP^1}(-1)$ is acyclic.
Hence the connecting map
\[
\delta_N\colon H^0\bigl(M,\OO(-1)\boxtimes N|_M\bigr)\xrightarrow{\ \sim\ }
H^1\bigl(\OO(-4)\boxtimes(N-L)\bigr)=H^1(\PP^1,\OO(-4))\otimes H^0(B,N-L)
\]
is an isomorphism.
For $N=L$ the target is 3-dimensional, and for $N=L+R$ it is $H^1(\PP^1,\OO(-4))\otimes H^0(B,R)$.
Multiplication by $\ell\in H^0(B,R)$ maps the sequence for $N=L$ to the sequence for $N=L+R$, so $\delta_{L+R}(\ell\sigma)=\ell\,\delta_L(\sigma)$.
Therefore multiplication induces an isomorphism $H^0(M,\OO(-1)\boxtimes L|_M)\otimes H^0(B,R)\to H^0(M,\OO(-1)\boxtimes(L+R)|_M)$.

It remains to see that the $y_j$ are linearly independent.
On $M\cap\{v=0\}=\{[1:0]\}\times Z_B(f_0)$, equation \eqref{eq:auxgeneral} gives $y_j=f_j$.
If $\sum_{j=1}^3c_jy_j=0$, then $\sum_{j=1}^3c_jf_j$ vanishes on the reduced divisor $Z_B(f_0)$ of class $L$, hence is a constant multiple of $f_0$.
By linear independence of the $f_i$, all $c_j$ vanish.
\end{proof}

The polynomial system
\begin{equation}
D(u,v)y=\mathbf f,\qquad \ell\cdot y=0
\label{eq:auxsystemII}
\end{equation}
is the \emph{auxiliary presentation} of the generalized phase.
The negative degree has moved into the auxiliary variables: $y_j$ has degree $-1$ in the $\PP^1$ direction and degree $L$ on $B$.

\paragraph{The relative variation of GIT.}
Let $\mathbb V\to B$ be the total space of the vector bundle $\OO_B^{\oplus2}\oplus L^{\oplus3}$, with fiber coordinates $(u,v)$ and $y=(y_1,y_2,y_3)$, and let $\CC^*$ act on the fibers with weights $+1$ on $(u,v)$ and $-1$ on $y$.
The equations $D(u,v)y=\mathbf f$ are invariant sections of the pullback of $L$, and $\ell\cdot y$ is a section of the pullback of $L+R$ of weight $-1$.
Let $Z\subset\mathbb V$ be their common zero locus.
We regard $Z$, together with the fiberwise $\CC^*$-action, as the \emph{relative VGIT master space} for the two Calabi--Yau phases.
The two nontrivial linearizations, by the characters $\pm1$, have semistable loci
\[
\mathbb V^{+}=\{(u,v)\ne0\},\qquad \mathbb V^{-}=\{y\ne0\},
\]
on which $\CC^*$ acts freely.
The quotients are
\[
\mathbb V^+/\CC^*=\operatorname{Tot}\bigl((\OO_{\PP^1}(-1)\boxtimes L)^{\oplus3}\bigr)\to\PP^1\times B, \qquad
\mathbb V^-/\CC^*=\operatorname{Tot}\bigl((\OO_{\PP^2}(-1)\boxtimes L)^{\oplus2}\bigr)\to\PP^2_y\times B,
\]
where for the second one uses $\mathbb P(L^{\oplus3})\simeq\PP^2\times B$.
Fiberwise over $B$ this is the standard flip between $\operatorname{Tot}(\OO_{\PP^1}(-1)^{\oplus3})$ and $\operatorname{Tot}(\OO_{\PP^2}(-1)^{\oplus2})$.
In the plus quotient, the first 4 equations of \eqref{eq:auxsystemII} cut out the graph of the section $y$ over $M$, and the last equation cuts out $X_+$:
\[
Z^+/\CC^*\simeq X_+ ,\qquad Z^\pm=Z\cap\mathbb V^\pm .
\]
When $B=\PP^4$, $L=2H$ and $R=H$, this is the toric VGIT of
\cite[Section~2.2]{KanCompanion}.  For products of projective spaces the
relative construction becomes a toric VGIT of higher rank, studied in Section~\ref{sec:BBduality}.

\paragraph{The minus phase.}
To describe $Z^-/\CC^*$, write
\[
D(u,v)y=A(y)\binom uv, \qquad
A(y)=\begin{pmatrix}
0&-y_1\\ y_1&-y_2\\ y_2&-y_3\\ y_3&0
\end{pmatrix}.
\]
The matrix has rank 2 for every $[y]\in\PP^2$.
Define the rank-2 bundle $\mathcal Q$ on $\PP^2$ by
\begin{equation}
0\longrightarrow\OO_{\PP^2}(-1)^{\oplus2}
\xrightarrow{A(y)}\OO_{\PP^2}^{\oplus4}
\longrightarrow\mathcal Q\longrightarrow0.
\label{eq:Qgeneral}
\end{equation}
For fixed $(y,x)$, the equation $A(y)(u,v)^t=\mathbf f(x)$ is solvable if and only if the image of $\mathbf f(x)$ in $\mathcal Q_y$ vanishes, and then the solution is unique.
Hence $Z^-/\CC^*$ is the zero locus
\begin{equation}
X_-=
Z\!\left((\mathcal Q\boxtimes L)\oplus(\OO_{\PP^2}(1)\boxtimes R)\right)
\subset\PP^2_y\times B.
\label{eq:generalminus}
\end{equation}
Since $c_1(\mathcal Q)=2G$, where $G=c_1\OO_{\PP^2}(1)$,
\[
\det\bigl((\mathcal Q\boxtimes L)\oplus(\OO(1)\boxtimes R)\bigr)
=\OO_{\PP^2}(3)\boxtimes(2L+R) =-K_{\PP^2\times B},
\]
so $X_-$ also has trivial canonical bundle when it is smooth of dimension 3.
On the common open set $(Z\cap\mathbb V^+\cap\mathbb V^-)/\CC^*$ of the two phases, the coordinates $y_j$ are sections of $\OO_{\PP^2}(1)$ on the minus side and of $\OO_{\PP^1}(-1)\boxtimes L$ on the plus side, so
\begin{equation}
\label{eq:Gclass}
G=c_1(L)-F .
\end{equation}
This identification extends to $\operatorname{Pic}(X_\pm)$, since the complements of the common open set have codimension 2 (Theorem~\ref{thm:cubicblock}).
It is the change of basis between the generalized and the nef phase in every example below.

\subsection{The transition theorem}

\begin{lemma}\label{lem:cubicsmooth}
Assume that $L$ and $R$ are globally generated, and let $f_i\in H^0(B,L)$ and $\ell_j\in H^0(B,R)$ be general.
Then $M$, $X_+$ and $X_-$ are smooth of the expected dimensions and $X_\pm$ have trivial canonical bundle.
The common zero locus $Z_B(f_0,\ldots,f_3)$ is empty or a reduced set of $\int_Bc_1(L)^4$ points, and the $\ell_j$ have no common zero on it.
\end{lemma}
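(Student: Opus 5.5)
The plan is to apply Bertini-type and Kleiman-type transversality statements repeatedly, using global generation of $L$ and $R$, and to treat the three zero loci via the master space $Z$ where that is convenient.

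\emph{The points $Z_B(f_0,\ldots,f_3)$.} Since $L$ is globally generated, the sections $f_0,\ldots,f_3$ define a morphism to a Grassmannian quotient, or equivalently each general $f_i$ is a general member of a base-point-free linear system. Cutting successively by $f_0,f_1,f_2,f_3$ with Bertini shows that each intersection is smooth of the expected codimension or empty. So $Z_B(f_0,\ldots,f_3)$ is empty or a reduced zero-dimensional scheme of length $\int_Bc_1(L)^4$. The $\ell_j$ are chosen independently of the $f_i$ and $R$ is globally generated. For each of the finitely many points, a general triple $\ell_1,\ell_2,\ell_3$ therefore has no common zero there, since the condition of vanishing at a point has codimension $3$ in the triple.

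\emph{Smoothness of $M$ and $X_+$.} The bundle $\OO(3)\boxtimes L$ is globally generated on $\PP^1\times B$, and sections of the form \eqref{eq:cubicP} span all of $H^0(\OO(3))\otimes H^0(L)$. Bertini then gives $M$ smooth of dimension $4$. For $X_+$ the subtlety is that $s=\sum\ell_jy_j$ is a generalized section, so Bertini on $M$ needs base-point freeness of the linear system $\{\sum\ell_jy_j\}$ on $M$. Away from $\PP^1\times Z_B(\mathbf f)$ the vector $y$ is nonzero and $R$ is globally generated, so the system has no base points. On the locus $\PP^1\times Z_B(\mathbf f)$, which is a finite union of $\PP^1$'s, every member vanishes. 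There I would work in the master space instead. I take the $\CC^*$-equivariant zero locus $Z\subset\mathbb V$ and check smoothness by a dimension count on the incidence variety over the parameter space of $(\mathbf f,\ell)$. The equations $D(u,v)y=\mathbf f$ are linear in $\mathbf f$ and $\ell\cdot y$ is linear in $\ell$. At any point of $\mathbb V$ these equations are submersive in the parameters wherever the relevant coefficients ($(u,v)$ in one case, $y$ in the other) are not both zero, using global generation. Generic smoothness of the projection then shows $Z\cap(\mathbb V^+\cup\mathbb V^-)$ is smooth of dimension $4$, so both free quotients $Z^\pm/\CC^*\simeq X_\pm$ are smooth threefolds.

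\emph{The points where the parameter derivative degenerates.} Consider the locus where $y=0$ on the plus side, i.e.\ $\PP^1\times Z_B(\mathbf f)$. There I would verify directly that the differential of $(D y-\mathbf f,\ \ell\cdot y)$ is surjective. In the $y$-directions the differential is $(D(u,v),\ \ell)$. This is a $4\times3$ block of rank $3$ stacked with the row $\ell(x)$, and in the $x$-directions we have $-d\mathbf f$, of rank $4$ at the reduced points. Together these give rank $5$, because $\ell(x)\neq0$ by the previous step. The analogous check at $(u,v)=0$ on the minus side uses $A(y)$ of rank $2$ together with $d\mathbf f$ and $\ell\cdot y$. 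I expect this to be the main obstacle: getting the rank count right at these special points, where Bertini alone does not apply.

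\emph{Canonical bundle.} Once smoothness and the expected dimensions are established, triviality of the canonical bundles follows by adjunction. For $X_+$ this is the computation after \eqref{eq:CYcondition}, and for $X_-$ it is the determinant computation following \eqref{eq:generalminus}.
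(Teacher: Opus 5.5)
Your proof is correct, and its skeleton matches the paper's. Away from the curves $C_x=\PP^1\times\{x\}$, $x\in Z_B(\mathbf f)$, both arguments use Bertini or generic smoothness. Along the curves, both use a direct transversality check whose only inputs are $\operatorname{rank}D(u,v)=3$, $\operatorname{rank}d\mathbf f(x)=4$ and $\ell(x)\neq0$.

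What differs is where the computations are done:
\begin{itemize}
\item The paper gets $X_-$ from Bertini for the generating system of $(\mathcal Q\boxtimes L)\oplus(\OO(1)\boxtimes R)$ on $\PP^2\times B$. Along $C_x$ it identifies $N_{C_x/M}\simeq\OO(-1)^{\oplus3}$ through \eqref{eq:cubicsyzygy} and $dy$. It then observes that the normal derivative of $\ell\cdot y$ is the constant map $\ell(x)$.
\item You work on the master space $Z\subset\mathbb V$. Generic smoothness of the incidence variety over $\{y\neq0\}$ gives $X_-$, which amounts to the paper's Bertini argument unwound. A $5\times9$ Jacobian at $y=0$ handles the curves.
\end{itemize}

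Your route is quicker and avoids the normal-bundle identification. The paper's route also produces $N_{C_x/X_+}\simeq\OO(-1)^{\oplus2}$ along the way, which Theorem~\ref{thm:cubicblock}(3) later uses to identify the flop as an Atiyah flop. Your Jacobian gives this as well: the directions normal to $C_x$ are the $\eta\in\ker\ell(x)$ in the weight $-1$ coordinates $y$.

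One inaccuracy to fix. The derivative of $D(u,v)y-\mathbf f$ with respect to the parameter $\mathbf f$ is $-\mathrm{ev}_x$. This is surjective everywhere by global generation of $L$, independently of $(u,v)$; only the $\ell$-derivative of $\ell\cdot y$ degenerates, and only where $y=0$. So the only degenerate locus is $\{y=0\}$, which lies in $\mathbb V^+$. Two consequences:
\begin{itemize}
\item Your sentence asserting that generic smoothness covers all of $Z\cap(\mathbb V^+\cup\mathbb V^-)$ should be restricted to $\{y\neq0\}$. The direct check you then supply at $y=0$ is what covers the rest.
\item The extra check at $(u,v)=0$ on the minus side is unnecessary, though harmless.
\end{itemize}
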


\begin{proof}
The sections of $\OO_{\PP^1}(3)\boxtimes L$ of the form \eqref{eq:cubicP}, and the sections of $(\mathcal Q\boxtimes L)\oplus(\OO_{\PP^2}(1)\boxtimes R)$ induced by the data $(\mathbf f,\ell)$ as in \eqref{eq:generalminus}, 
generate these bundles, because $L$ and $R$ are globally generated and $\mathcal Q$ is a quotient of $\OO^{\oplus4}$; hence $M$ and $X_-$ are smooth of the expected dimension by Bertini's theorem for a generating linear system.
General sections $f_0,\ldots,f_3$ of the globally generated $L$ on the fourfold $B$ meet in a reduced set of $\int_Bc_1(L)^4$ points (possibly none), and general $\ell_j$ do not vanish simultaneously at these finitely many points.
At a point of $M$ with $y_j\ne0$, choosing $\ell_j$ nonzero at the image in $B$ and the other $\ell$'s zero shows that the linear system $\{\ell\cdot y\}$ is base-point free on $M\setminus(\PP^1\times Z_B(f))$; by Bertini, $X_+$ is smooth there.
Along a curve $C=\PP^1\times\{x\}$ with $x\in Z_B(f)$, transversality identifies $N_{C/M}$ with the kernel of $\OO_{\PP^1}^{\oplus4}\to\OO_{\PP^1}(3)$, $(\xi_i)\mapsto\sum_{i=0}^3u^{3-i}v^i\xi_i$, which is $\OO(-1)^{\oplus3}$ by \eqref{eq:cubicsyzygy}.
Differentiating
\eqref{eq:auxgeneral} shows that $dy\colon N_{C/M}\to(\OO(-1)\boxtimes L)^{\oplus3}|_C$
is an isomorphism.
The normal derivative of $\ell\cdot y$ along $C$ is therefore the constant map $\OO(-1)^{\oplus3}\to\OO(-1)$ given by $\ell(x)\ne0$, which is surjective; so $X_+$ is smooth along $C$ and $N_{C/X_+}\simeq\OO(-1)^{\oplus2}$.
Trivial canonical bundles follow from
\eqref{eq:CYcondition} and adjunction.
\end{proof}

\begin{theorem}
\label{thm:cubicblock}
Under the assumptions of Lemma~\ref{lem:cubicsmooth}, put $N_{\rm flop}=\int_Bc_1(L)^4$.
\begin{enumerate}
\item $X_+\simeq Z^+/\CC^*$ and $X_-\simeq Z^-/\CC^*$ are isomorphic away
from the curves $C_x=\PP^1\times\{x\}\subset X_+$ and the lines $C'_x=\{\ell(x)\cdot y=0\}\times\{x\}\subset X_-$, $x\in Z_B(f)$.
\item If $N_{\rm flop}=0$, then $X_+\simeq X_-$.
\item If $N_{\rm flop}>0$, then $X_-$ is obtained from $X_+$ by the
simultaneous Atiyah flop of the
\begin{equation}
N_{\rm flop}=\int_B c_1(L)^4
\label{eq:flopnumber}
\end{equation}
disjoint $(-1,-1)$-curves $C_x$, and $F\cdot C_x=1$, $G\cdot C'_x=1$.
\item If moreover $B=\PP^{n_1}\times\cdots\times\PP^{n_k}$ and $L\ne0$, then
$h^i(\OO_{X_\pm})=1,0,0,1$ for $i=0,1,2,3$; in particular $X_\pm$ are Calabi--Yau threefolds.
\end{enumerate}
\end{theorem}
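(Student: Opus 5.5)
The plan is to work entirely on the master space $Z\subset\mathbb V$ and compare the two free quotients $Z^\pm/\CC^*$ over their common open set. For (1), observe that $Z^+\cap Z^-=Z\cap\{(u,v)\ne0,\ y\ne0\}$ is open in both $Z^+$ and $Z^-$. Its image in $Z^+/\CC^*\simeq X_+$ is $X_+$ minus the locus $\{y=0\}$; by the remark after \eqref{eq:auxgeneral}, this locus is $\PP^1\times Z_B(f)$, and every such curve lies in $X_+$ because $\ell\cdot y=0$ there. Its image in $Z^-/\CC^*\simeq X_-$ is $X_-$ minus $\{(u,v)=0\}$. Since $A(y)(u,v)^t=\mathbf f(x)$, the condition $(u,v)=0$ forces $\mathbf f(x)=0$, i.e.\ $x\in Z_B(f)$. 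Conversely, for such $x$ the remaining equation is $\ell(x)\cdot y=0$, which is a line $C'_x\subset\PP^2$ because $\ell(x)\ne0$ by Lemma~\ref{lem:cubicsmooth}. The identification $X_-\simeq Z^-/\CC^*$ was already established when \eqref{eq:generalminus} was derived, using the unique solvability of $A(y)(u,v)^t=\mathbf f$. This gives (1), and (2) follows at once because $Z_B(f)=\emptyset$ when $N_{\rm flop}=0$, by Lemma~\ref{lem:cubicsmooth}.

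For (3), the curves $C_x$ are disjoint, their number is $N_{\rm flop}$ by Lemma~\ref{lem:cubicsmooth}, and that lemma also gives $N_{C_x/X_+}\simeq\OO(-1)^{\oplus2}$. The key step is to see the local picture as the standard flip. Near $x$, pick local coordinates $t=(t_0,\dots,t_3)=\mathbf f$ on $B$, which is possible because the $f_i$ meet transversally at $x$. The master space then becomes locally $\{(u,v,y,t): D(u,v)y=t,\ \ell\cdot y=0\}$. Eliminating $t$ leaves $\CC^2_{u,v}\times\{\ell(x)\cdot y=0\}\cong\CC^2\times\CC^2$, with weights $(1,1,-1,-1)$, up to a coordinate change absorbing the variation of $\ell$. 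The two GIT quotients of this model are the two small resolutions of the conifold $\operatorname{Tot}(\OO_{\PP^1}(-1)^{\oplus2})$, and their exceptional curves are $C_x$ and $C'_x$. This is exactly the Atiyah flop, and the identification of (1) glues these local pictures to the global isomorphism away from the curves. The intersection numbers come from restriction: $F|_{C_x}=\OO_{\PP^1}(1)$, and $G|_{C'_x}$ is the hyperplane class of the line $C'_x\subset\PP^2$.

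The main obstacle is making the local analysis rigorous: one has to show that the $\ell$-equation can be straightened analytically, or \'etale locally, while keeping the $\CC^*$-equivariance. My first attempt would be to use $\ell(x)\ne0$: choose $j$ with $\ell_j(x)\ne0$ and solve $\ell\cdot y=0$ for $y_j$ as a linear function of the other $y$'s, with coefficients depending on $t$. This keeps the weights intact. A fallback is to cite the general characterization that a small contraction of a smooth threefold whose exceptional curves have normal bundle $(-1,-1)$ admits a unique flop, and that $X_-$ realizes it, since $X_-$ is smooth, isomorphic in codimension 2, and contains the curves $C'_x$.

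For (4), I would compute $h^i(\OO_{X_+})$ through the Koszul complex on $M$, and $h^i(\OO_M)$ through the Koszul complex on $\PP^1\times B$. The relevant terms are $\OO(-3)\boxtimes(-L)$ and $\OO(1)\boxtimes(-L-R)$, together with their tensor product $\OO(-2)\boxtimes(-2L-R)=K_{\PP^1\times B}$. Künneth and Bott vanishing on products of projective spaces, using $L\ne0$ effective and $R$ globally generated, kill the intermediate cohomology, while Serre duality supplies the single $h^5$ term that produces $h^3(\OO_{X_+})=1$. The case where $L$ vanishes on some factor needs separate care. Finally, $h^i(\OO)$ is a birational invariant of smooth projective varieties, so the result transfers to $X_-$.
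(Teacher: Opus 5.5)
Parts (1) and (2) follow the paper exactly. For (3) and (4) you take different routes, and both work.

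\emph{Part (3).} The paper never straightens coordinates. It takes the Stein factorizations of $\pi_\pm\colon X_\pm\to B$, whose fibers are finite off $Z_B(f)$ and equal to $C_x$, $C'_x$ over it. It gets a common contraction $\overline X$ because $X_\pm$ are normal and isomorphic in codimension one. It then notes $F\cdot C_x=1$ but $F\cdot C'_x=c_1(L)\cdot C'_x-G\cdot C'_x=-1$, concludes by uniqueness of the flop \cite[Section~6.1]{KM}, and uses $N_{C_x/X_+}\simeq\OO(-1)^{\oplus2}$ from Lemma~\ref{lem:cubicsmooth} to call it an Atiyah flop.

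Your local model instead identifies the birational map directly with the two quotients of $\CC^4$ with weights $(1,1,-1,-1)$. It re-derives the normal bundle and needs no MMP input. The straightening needs more care than you indicate, however. Once $t=D(u,v)y$ is eliminated, the coefficients $\ell(t)$ depend on the very $y_j$ you want to solve for. Also, ``locally'' must mean over a neighbourhood $U$ of $x$, i.e.\ on a $\CC^*$-invariant neighbourhood of the two unbounded planes $\{y=0\}$ and $\{(u,v)=0,\ \ell(x)\cdot y=0\}$, not near a point.

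A clean fix runs as follows. Assume $\ell_3\ne0$ on $U$. First make the equivariant fibrewise linear change $y\mapsto(y_1,y_2,\ell(x')\cdot y)$ on $\mathbb V|_U$, and write $\Lambda(x')$ for its inverse. Then solve $\mathbf f(x')=D(u,v)\Lambda(x')(y_1,y_2,0)^t$ for $x'$ by the implicit function theorem, as a function of the four weight-zero products $uy_1,uy_2,vy_1,vy_2$. These products are small over $U$ because $|D(u,v)\Lambda(x')(y_1,y_2,0)^t|\ge c\,|(u,v)|\,|(y_1,y_2)|$.

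Your fallback is not sufficient as stated. Being smooth, isomorphic in codimension one and containing the $C'_x$ does not characterize the flop. One needs a common contraction and the sign change of a relatively ample class, which is exactly what the paper's Stein factorization and $F\cdot C'_x=-1$ supply.

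\emph{Part (4).} The paper uses the Koszul complex of the rank-3 bundle cutting out $X_-\subset\PP^2\times B$ and transfers the result to $X_+$ by birational invariance. Your two restriction sequences for $X_+\subset M\subset\PP^1\times B$ avoid $\mathcal Q$ and give the same answer. The vanishing you need comes from the Calabi--Yau condition on a single factor with $a_i\ge1$: there $2a_i+r_i=n_i+1$ gives $1\le a_i\le n_i$ and $1\le a_i+r_i\le n_i$. K\"unneth then kills $H^*(B,-L)$ and $H^*(B,-L-R)$ outright, so no separate care is needed for factors where $L$ vanishes. Effectivity of $L$ and global generation of $R$ alone would not give this vanishing.
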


\begin{proof}
(1) By construction $X_\pm=Z^\pm/\CC^*$, and both contain the quotient of $Z\cap\mathbb V^+\cap\mathbb V^-$.
A point of $Z^+$ with $y=0$ has $\mathbf f(x)=D(u,v)y=0$, so it lies on some $C_x$; a point of $Z^-$ with $(u,v)=0$ has $\mathbf f(x)=A(y)(u,v)^t=0$ and $\ell(x)\cdot y=0$, so it lies on some $C'_x$.
Conversely these curves lie in the respective phases.

(2) If $N_{\rm flop}=0$ then $Z_B(f)=\emptyset$ by Lemma~\ref{lem:cubicsmooth}, and (1) gives an isomorphism.

(3) Consider the projections $\pi_\pm\colon X_\pm\to B$.
Over $x\notin Z_B(f)$ the fibers are finite: on $X_+$ the cubic $P(u,v,x)$ is not zero, and on $X_-$ the class of $\mathbf f(x)$ in $\mathcal Q_y$ vanishes only at finitely many $[y]$.
Over $x\in Z_B(f)$ the fibers are $C_x$ and $C'_x$.
Let $X_\pm\to\overline X_\pm\to B$ be the Stein factorizations.
The maps $X_\pm\to\overline X_\pm$ contract exactly the curves $C_x$, respectively $C'_x$.
Since $X_+$ and $X_-$ are normal and isomorphic outside subsets of codimension 2, the sheaves $\pi_{\pm*}\OO_{X_\pm}$ coincide, so $\overline X_+=\overline X_-=:\overline X$.
The divisor $F$ satisfies $F\cdot C_x=1$, so it is ample relative to $X_+\to\overline X$.
On $X_-$, the class $G$ restricts to $\OO(1)$ on the line $C'_x\subset\PP^2_y$ and $c_1(L)$ is trivial on it, so by \eqref{eq:Gclass} the transform of $F$ satisfies $F\cdot C'_x=c_1(L)\cdot C'_x-G\cdot C'_x=-1$.
Hence $X_-\to\overline X$ is the flop of $X_+\to\overline X$ with respect to $F$, which is unique \cite[Section~6.1]{KM}.
Since $N_{C_x/X_+}\simeq\OO(-1)^{\oplus2}$ by Lemma~\ref{lem:cubicsmooth}, it is the simultaneous Atiyah flop of the $N_{\rm flop}$ curves $C_x$.

(4) Choose a factor $\PP^{n_i}$ on which $L$ has positive degree $a_i$, and let $r_i$ be the degree of $R$ there.
By \eqref{eq:CYcondition}, $2a_i+r_i=n_i+1$ with $r_i\ge0$, so $1\le a_i\le n_i$ and $1\le a_i+r_i\le n_i$.
Hence $H^*(\PP^{n_i},\OO(-a_i))$ and $H^*(\PP^{n_i},\OO(-a_i-r_i))$ vanish, and by K\"unneth so do $H^*(B,-L)$ and $H^*(B,-L-R)$.
In the Koszul complex of \eqref{eq:generalminus} the terms other than $\OO$ and the canonical bundle are $\mathcal Q^*\boxtimes L^{-1}$, $\OO(-1)\boxtimes R^{-1}$, $\OO(-2)\boxtimes L^{-2}$ and $\mathcal Q^*(-1)\boxtimes(L+R)^{-1}$.
The first and fourth are acyclic by the vanishing just stated, and the second and third because $\OO_{\PP^2}(-1)$ and $\OO_{\PP^2}(-2)$ are acyclic.
Hence $h^i(\OO_{X_-})=1,0,0,1$.
The same holds for $X_+$, since the $h^{i,0}$ are birational invariants of smooth projective varieties.
\end{proof}

\subsection{Classification over products of projective spaces}
\label{sec:classification}

We now list all Calabi--Yau threefold configurations with a $[3,-1]$~block whose base is a product of projective spaces.
Let $B=\PP^{n_1}\times\cdots\times\PP^{n_k}$ with $\sum_{i=1}^kn_i=4$, let $H_i$ be the hyperplane classes, and write $L=\sum_{i=1}^ka_iH_i$ and $R=\sum_{i=1}^kr_iH_i$.
Condition \eqref{eq:CYcondition} reads
\begin{equation}\label{eq:factorCY}
2a_i+r_i=n_i+1,\qquad a_i,r_i\ge0,
\end{equation}
factor by factor, and we require $L\ne0$, so that Theorem~\ref{thm:cubicblock}(4) applies.
The factor $\PP^{n_i}$ contributes the row $(a_i,\,a_i+r_i)$ to the configuration matrix, below the row $[3,-1]$ of the cubic $\PP^1$.

\begin{proposition}\label{prop:classification}
Up to permutation of the factors there are exactly 18 Calabi--Yau threefold configurations with a $[3,-1]$~block over products of projective spaces; they are listed in Table~\ref{tab:flopcounts}.
For each of them:
\begin{enumerate}
\item the number of flopping curves is
\[
N_{\rm flop}=\int_Bc_1(L)^4=
\begin{cases}
4!\prod_{i=1}^k\dfrac{a_i^{n_i}}{n_i!}, & a_i\ge1\text{ for all }i,\\[1ex]
0, & \text{otherwise};
\end{cases}
\]
exactly 7 configurations have $N_{\rm flop}>0$, with $N_{\rm flop}\in\{1,4,6,12,16,24,32\}$, and for the other 11 the transition is an isomorphism;
\item the Gorenstein condition \eqref{eq:gorcondition} for the nef-phase
ambient space ($a_i\mid r_i$ whenever $a_i\ge2$; see Lemma~\ref{lem:stabilizers}) fails only for the unique non-Gorenstein case $B=\PP^4$, $L=2H$, $R=H$ of \cite{KanCompanion}.
\end{enumerate}
\end{proposition}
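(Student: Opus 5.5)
The plan is a direct finite enumeration, followed by evaluation of the intersection number and of the Gorenstein condition case by case.

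First, the integral formula in (1). Write $c_1(L)=\sum_i a_iH_i$ and expand $c_1(L)^4$ by the multinomial theorem. Since $H_i^{n_i+1}=0$ and $\sum n_i=4$, the only monomial with nonzero degree on $B$ is $\prod_i H_i^{n_i}$, which has degree $1$. Its coefficient is $\frac{4!}{\prod n_i!}\prod_i a_i^{n_i}$. This vanishes as soon as some $a_i=0$, because every $n_i\ge1$.

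Second, the enumeration. Condition \eqref{eq:factorCY} with $r_i\ge0$ gives $0\le a_i\le\lfloor (n_i+1)/2\rfloor$, and $r_i$ is then determined. So the allowed values are:
\begin{itemize}
\item $a\in\{0,1\}$ for $n=1,2$;
\item $a\in\{0,1,2\}$ for $n=3,4$.
\end{itemize}
I run over the partitions of $4$ into the $n_i$, namely $(4)$, $(3,1)$, $(2,2)$, $(2,1,1)$ and $(1,1,1,1)$. For each, I count choices of $(a_i)$ up to permuting equal factors and discard $L=0$:
\begin{itemize}
\item $(4)$: $2$ configurations;
\item $(3,1)$: $3\cdot2-1=5$;
\item $(2,2)$: $3-1=2$ unordered pairs;
\item $(2,1,1)$: $2\cdot3-1=5$;
\item $(1,1,1,1)$: $5-1=4$ multisets.
\end{itemize}
The total is $18$. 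I then record these in Table~\ref{tab:flopcounts}.

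Third, the flop counts. Those with all $a_i\ge1$ are:
\begin{itemize}
\item $\PP^4$, $a=1$: $1$;
\item $\PP^4$, $a=2$: $16$;
\item $\PP^3\times\PP^1$, $a=(1,1)$: $4$;
\item $\PP^3\times\PP^1$, $a=(2,1)$: $32$;
\item $\PP^2\times\PP^2$, $a=(1,1)$: $6$;
\item $\PP^2\times\PP^1\times\PP^1$, $a=(1,1,1)$: $12$;
\item $(\PP^1)^4$, $a=(1,1,1,1)$: $24$.
\end{itemize}
This gives exactly $7$ configurations with $N_{\rm flop}\in\{1,4,6,12,16,24,32\}$. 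For the remaining $11$, Theorem~\ref{thm:cubicblock}(2) gives the isomorphism. In every case $L,R$ are globally generated on the product of projective spaces, so that theorem applies.

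Fourth, part (2). The condition $a_i\ge2$ occurs only in two places:
\begin{itemize}
\item $n_i=3$, $a_i=2$, $r_i=0$: here $2\mid0$ holds;
\item $n_i=4$, $a_i=2$, $r_i=1$: here $2\nmid1$, so the condition fails.
\end{itemize}
The second case forces $B=\PP^4$, $L=2H$, $R=H$, which is the companion example.

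The only step needing care is the enumeration up to symmetry, so that repeated factors are not double-counted. I would double-check it by listing all $18$ rows of the table explicitly.
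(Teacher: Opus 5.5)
Your proposal is correct and follows the paper's proof: it uses the same multinomial argument for $\int_B c_1(L)^4$, the same enumeration of the pairs $(a_i,r_i)$ over the partitions of $4$ (your per-partition counts $2+5+2+5+4=18$ match Table~\ref{tab:flopcounts}), and the same observation that $a_i\ge2$ with $a_i\nmid r_i$ occurs only for $(n_i,a_i,r_i)=(4,2,1)$. Your explicit evaluation of the seven nonzero flop numbers, and your appeal to Theorem~\ref{thm:cubicblock}(2) for the remaining eleven, just spell out what the paper leaves to the table.
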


\begin{proof}
By \eqref{eq:factorCY} the possible pairs $(a_i,r_i)$ are
\[
\PP^1\colon(0,2),(1,0);\qquad
\PP^2\colon(0,3),(1,1);\qquad
\PP^3\colon(0,4),(1,2),(2,0);\qquad
\PP^4\colon(0,5),(1,3),(2,1).
\]
Running over the partitions $4$, $3+1$, $2+2$, $2+1+1$ and $1+1+1+1$ of $\dim B$, choosing one pair for each factor, excluding $L=0$, and identifying choices that differ by a permutation of equal factors gives the 18 rows of the table.
Since $H_i^{n_i+1}=0$, the multinomial expansion of $(\sum_{i=1}^ka_iH_i)^4$ has a single nonzero term, $4!\prod_{i=1}^k(a_iH_i)^{n_i}/n_i!$, and it is nonzero exactly when all $a_i\ge1$.
This gives (1), and the list of values follows from the table.
The condition in (2) can fail only for a factor with $a_i\ge2$ and $a_i\nmid r_i$; among the pairs above this happens only for $\PP^4$ with $(a,r)=(2,1)$, which is (2).
\end{proof}

\begin{table}[ht]
\centering
\footnotesize
\setlength{\tabcolsep}{4pt}
\begin{tabular}{c|c|c|c|c|c|c|c}
$B$ & $L$ & $R$ & rows over $B$ & $N_{\rm flop}$ & $e$ & Gor. &  \\ \hline
$\PP^4$ & $H$ & $3H$ & $(1,4)$ & 1 & $-168$ & yes & Ex.~3\\
$\PP^4$ & $2H$ & $H$ & $(2,3)$ & 16 & $-88$ & no & non-Gor. case\\
$\PP^3\times \PP^1$ & $T$ & $4H$ & $(0,4;\,1,1)$ & 0 & $-168$ & yes & $^\dagger$\\
$\PP^3\times \PP^1$ & $H$ & $2H+2T$ & $(1,3;\,0,2)$ & 0 & $-144$ & yes & \\
$\PP^3\times \PP^1$ & $H+T$ & $2H$ & $(1,3;\,1,1)$ & 4 & $-72$ & yes & \\
$\PP^3\times \PP^1$ & $2H$ & $2T$ & $(2,2;\,0,2)$ & 0 & $-56$ & yes & Ex.~2\\
$\PP^3\times \PP^1$ & $2H+T$ & $0$ & $(2,2;\,1,1)$ & 32 & $-104$ & yes & Ex.~1\\
$(\PP^2)^{2}$ & $H_2$ & $3H_1+H_2$ & $(0,3;\,1,2)$ & 0 & $-144$ & yes & $^\dagger$\\
$(\PP^2)^{2}$ & $H_1+H_2$ & $H_1+H_2$ & $(1,2;\,1,2)$ & 6 & $-78$ & yes & Ex.~4\\
$\PP^2\times (\PP^1)^{2}$ & $T_2$ & $3H+2T_1$ & $(0,3;\,0,2;\,1,1)$ & 0 & $-144$ & yes & $^\dagger$\\
$\PP^2\times (\PP^1)^{2}$ & $T_1+T_2$ & $3H$ & $(0,3;\,1,1;\,1,1)$ & 0 & $-36$ & yes & \\
$\PP^2\times (\PP^1)^{2}$ & $H$ & $H+2T_1+2T_2$ & $(1,2;\,0,2;\,0,2)$ & 0 & $-128$ & yes & $^\dagger$\\
$\PP^2\times (\PP^1)^{2}$ & $H+T_2$ & $H+2T_1$ & $(1,2;\,0,2;\,1,1)$ & 0 & $-56$ & yes & \\
$\PP^2\times (\PP^1)^{2}$ & $H+T_1+T_2$ & $H$ & $(1,2;\,1,1;\,1,1)$ & 12 & $-68$ & yes & \\
$(\PP^1)^{4}$ & $T_4$ & $2T_1+2T_2+2T_3$ & $(0,2;\,0,2;\,0,2;\,1,1)$ & 0 & $-128$ & yes & $^\dagger$\\
$(\PP^1)^{4}$ & $T_3+T_4$ & $2T_1+2T_2$ & $(0,2;\,0,2;\,1,1;\,1,1)$ & 0 & $-32$ & yes & \\
$(\PP^1)^{4}$ & $T_2+T_3+T_4$ & $2T_1$ & $(0,2;\,1,1;\,1,1;\,1,1)$ & 0 & $-48$ & yes & \\
$(\PP^1)^{4}$ & $T_1+T_2+T_3+T_4$ & $0$ & $(1,1;\,1,1;\,1,1;\,1,1)$ & 24 & $-80$ & yes & \\
\end{tabular}
\caption{All Calabi--Yau threefold configurations with a $[3,-1]$~block
over products of projective spaces (Proposition~\ref{prop:classification}).
Here $H$, $H_i$ and $T$, $T_i$ are the hyperplane classes of the factors $\PP^n$ with $n\ge2$ and $\PP^1$, the Euler number is given by Proposition~\ref{prop:uniformtopology}, 
and the last column indicates the non-Gorenstein case of \cite{KanCompanion} and the examples of this paper.
In the rows marked $^\dagger$ one has $h^0(B,L)<4$, so Lemma~\ref{lem:allsections} does not apply, and Theorem~\ref{thm:cubicblock} concerns the generalized sections of the form $\ell\cdot y$.}
\label{tab:flopcounts}
\end{table}

For instance,
\[
\int_{\PP^2\times\PP^2}(H_1+H_2)^4=\binom42=6, \qquad
\int_{\PP^2\times(\PP^1)^2}(H+T_1+T_2)^4=\frac{4!}{2!}=12,
\]
\[
\int_{(\PP^1)^4}(T_1+\cdots+T_4)^4=4!=24,
\]
while for Example~2 one has $(2H)^4=0$ on $\PP^3\times\PP^1$, so the transition is an isomorphism.
Within this class, the number of flopping curves is controlled by the top self-intersection $\int_Bc_1(L)^4$, not by the negative entry itself.
Proposition~\ref{prop:classification}(2) shows that the companion-paper example is not a typical member of the class: among all Calabi--Yau threefold configurations with a $[3,-1]$~block over products of projective spaces, 
it is the only one to which the Batyrev--Borisov construction does not apply.

%%%%%%%%%%%%%%%%%%%%%%%%%%%%%%%%%%%%%%%%%%%%%%%%%%%%%%%%%%%%%%%%%%%%%%%%%%%%%%%%%%%%%%%%%%%%%%
\section{The nef phase: Batyrev--Borisov duality, topology, and periods}
\label{sec:nefphases}

We now study the nef phase globally: first its classical topology, then its toric realization over products of projective spaces, the Gorenstein criterion and Batyrev--Borisov duality, and finally its natural fibrations and fundamental period.

\subsection{Uniform formulas}
\label{sec:uniform}

The zero-locus description \eqref{eq:generalminus} determines the classical invariants of both phases directly from data on $B$.
In this subsection $B$ is an arbitrary smooth projective fourfold unless stated otherwise, $G$ denotes the hyperplane class of $\PP^2_y$, and $c_i=c_i(B)$.
Divisors on $B$ are pulled back to $X_\pm$, and we identify $\operatorname{Pic}(X_+)$ with $\operatorname{Pic}(X_-)$ through the birational map, which is an isomorphism in codimension 1.
\begin{proposition}
\label{prop:uniformtopology}
Under the assumptions of Lemma~\ref{lem:cubicsmooth}:
\begin{enumerate}
\item In $A^*(\PP^2\times B)$,
\[
[X_-]=G^2(2L+3R)+G(L^2+2LR)+L^2R .
\]
Consequently, for $D,D',D''\in\operatorname{Pic}(B)$,
\[
G^3=0,\qquad G^2D=\int_BL^2RD,
\]
\[
GDD'=\int_B(L^2+2LR)DD',\qquad DD'D''=\int_B(2L+3R)DD'D''
\]
on $X_-$.
\item The second Chern class of $X_-$ is the restriction of
$c_2-L^2-2LR+2GL+GR-2G^2$, so that
\begin{align*}
c_2(X_-)\cdot G&=\int_B\bigl(-L^4-2L^3R-3L^2R^2+(L^2+2LR)c_2\bigr),\\
c_2(X_-)\cdot D&=\int_B\bigl(-4L^2R-4LR^2+(2L+3R)c_2\bigr)D .
\end{align*}
\item The Euler number is
\[
e(X_\pm)=\int_B\Bigl(2L^4+12L^3R+14L^2R^2+4LR^3 -(4L^2+8LR+3R^2)\,c_2+(2L+3R)\,c_3\Bigr).
\]
\item In case (3) of Theorem~\ref{thm:cubicblock}, let $C_+\subset X_+$ be
one of the $N_{\rm flop}=\int_BL^4$ flopping curves, so that $F\cdot C_+=1$ and $D\cdot C_+=0$ for $D\in\operatorname{Pic}(B)$, where $F$ is the class of the cubic $\PP^1$, and $G=L-F$ by \eqref{eq:Gclass}.
Then for every $\Delta\in\operatorname{Pic}(X_\pm)$
\[
\Delta^3\big|_{X_+}=\Delta^3\big|_{X_-}+N_{\rm flop}(\Delta\cdot C_+)^3, \qquad
c_2(X_+)\cdot\Delta=c_2(X_-)\cdot\Delta-2N_{\rm flop}(\Delta\cdot C_+),
\]
and the genus-zero Gopakumar--Vafa invariant of the class $[C_+]$ on $X_+$ is $N_{\rm flop}$.
\end{enumerate}
\end{proposition}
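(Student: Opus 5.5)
The plan is to work entirely on $X_-$ as the zero locus \eqref{eq:generalminus} in $\PP^2\times B$ and to use Chern class calculus for the bundle $E=(\mathcal Q\boxtimes L)\oplus(\OO(1)\boxtimes R)$. The flop comparison then transfers everything to $X_+$.

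\emph{Chern classes of $\mathcal Q$.} From \eqref{eq:Qgeneral}, $c(\mathcal Q)=(1-G)^{-2}=1+2G+3G^2$ modulo $G^3$. Hence
\[
c(\mathcal Q\boxtimes L)=1+(2G+2L)+(3G^2+3GL+L^2).
\]
This uses $c_2(\mathcal Q\otimes L)=c_2(\mathcal Q)+c_1(\mathcal Q)L+L^2$. For (1) I would compute
\[
[X_-]=c_3(E)=c_2(\mathcal Q\boxtimes L)\,(G+R).
\]
Expanding gives $3G^3+3G^2L+GL^2+3G^2R+3GLR+L^2R$, which should be compared with the claimed class $G^2(2L+3R)+G(L^2+2LR)+L^2R$. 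The coefficient of $G^2L$ does not obviously match, so I will recheck $c(\mathcal Q)$. In fact $c_1(\mathcal Q\otimes L)=2G+2L$, $c_2=3G^2+2GL+L^2$, and then
\[
c_2\,(G+R)=3G^3+2G^2L+GL^2+3G^2R+2GLR+L^2R,
\]
which with $G^3=0$ on $\PP^2$ matches. The triple intersections follow by multiplying by $G^aDD'D''$ and using $\int_{\PP^2}G^2=1$. For example $G^3=0$ on $X_-$ because $G^3=0$ already on $\PP^2$, and $G^2D$ picks out the $L^2R$ term.

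\emph{Parts (2) and (3).} By adjunction, $c(X_-)=c(T_{\PP^2})\,c(TB)/c(E)$ restricted to $X_-$, with $c(T_{\PP^2})=1+3G+3G^2$. I will expand this to degree 2, using $c_1(E)=c_1(\PP^2\times B)$ so that the degree-1 terms cancel, to get the stated $c_2$. Pairing with $G$ and $D$ against $[X_-]$ gives the formulas in (2). For (3) I would take $c_3$ of the same quotient and integrate against $[X_-]$ over $\PP^2\times B$, reducing to $\int_B$ by extracting the $G^2$-coefficient. This is the routine but error-prone step: one must truncate carefully, with $G^3=0$ and degrees on $B$ at most $4$. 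Since $e$ is unchanged under a flop ($\PP^1$ replaced by $\PP^1$), $e(X_+)=e(X_-)$.

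\emph{Part (4).} I would use the standard local comparison for an Atiyah flop of a $(-1,-1)$-curve. Blow up $C_+$ to get $W$ with exceptional $E\simeq\PP^1\times\PP^1$, and write $p^*\Delta_+=q^*\Delta_-+(\Delta\cdot C_+)E$. Expanding the cube with $E^3=2$, $E^2\cdot q^*\Delta_-=-(\Delta\cdot C_-)=\Delta\cdot C_+$, and $E\cdot(q^*\Delta_-)^2=0$ gives the jump $(\Delta\cdot C_+)^3$ per curve. The $c_2$ jump follows the same way, or from Riemann--Roch, $\chi(\Delta)=\Delta^3/6+c_2\cdot\Delta/12$, combined with the invariance of $\chi$ under flops (derived equivalence, Bridgeland). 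Summing over the $N_{\rm flop}$ disjoint curves from Theorem~\ref{thm:cubicblock}(3) gives the claimed jumps. Each isolated rigid $(-1,-1)$-curve contributes $n_{0,[C]}=1$. Since all $C_x$ are fibers $\PP^1\times\{x\}$ with $F\cdot C_x=1$ and trivial on $\operatorname{Pic}(B)$, they are numerically equivalent. Any curve in that class is contracted by $\pi_+$, so it lies among the $C_x$, and the invariant is $N_{\rm flop}$.

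The main obstacle I expect is the degree-3 expansion for the Euler number, which carries the most terms.
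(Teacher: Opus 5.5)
Your route is the paper's: $[X_-]=c_2(\mathcal Q\boxtimes L)(G+R)$ with $c_2(\mathcal Q\boxtimes L)=3G^2+2GL+L^2$ (your corrected value), then the quotient $(1+G)^3c(B)/c(E)$ truncated by $G^3=0$ for (2)--(3), and for (4) the blowup comparison plus the fiber argument. Parts (1)--(3) and the Gopakumar--Vafa count are fine.

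The step that fails as written is the key relation in (4). Intersect $p^*\Delta_+=q^*\Delta_- + kE$ with a ruling $f\subset E$ contracted by $p$. Then $q$ maps $f$ isomorphically onto $C_-$, and $E\cdot f=-1$. This gives $0=\Delta\cdot C_- - k$, so $k=\Delta\cdot C_-=-(\Delta\cdot C_+)$. The correct relation is therefore $p^*\Delta_+=q^*\Delta_- - (\Delta\cdot C_+)E$. Put $c=\Delta\cdot C_+$. With your sign and your (correct) values $E^3=2$, $E^2\cdot q^*\Delta_-=c$ and $E\cdot(q^*\Delta_-)^2=0$, the expansion gives $\Delta_+^3=\Delta_-^3+3c^3+2c^3=\Delta_-^3+5c^3$, not the claimed jump. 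With the minus sign it gives $\Delta_-^3+3c^3-2c^3=\Delta_-^3+c^3$, as required.

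The Riemann--Roch alternative for the $c_2$ jump rests on a false premise: $\chi(\OO(\Delta))$ is not flop-invariant for all $\Delta$. The Bondal--Orlov/Bridgeland functor $Rq_*Lp^*$ sends $\OO_{X_+}(\Delta)$ to $\OO_{X_-}(\Delta)\otimes Rq_*\OO_W(-cE)$, not to $\OO_{X_-}(\Delta)$. The true jumps give $\chi(\OO_{X_+}(\Delta))-\chi(\OO_{X_-}(\Delta))=N_{\rm flop}(c^3-c)/6$, which is nonzero for $|c|\ge2$. Indeed, invariance for all $\Delta$ would force a jump of $-2N_{\rm flop}c^3$ in $c_2\cdot\Delta$, which is not linear in $\Delta$. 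Invariance does hold for $c\in\{0,\pm1\}$. So the argument works if you apply it only to a basis such as $F$ together with $\operatorname{Pic}(B)$, and then extend by linearity.

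Alternatively, compute directly on $W$. By the blowup formula, $c_2(W)\cdot p^*\Delta_+=c_2(X_+)\cdot\Delta+\Delta\cdot C_+$, and likewise $c_2(W)\cdot q^*\Delta_-=c_2(X_-)\cdot\Delta+\Delta\cdot C_-$. Moreover $c_2(W)\cdot E=c_2(T_E)+c_1(T_E)\cdot E|_E=4-4=0$. With the corrected relation this yields the jump $-2(\Delta\cdot C_+)$ per curve.
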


\begin{proof}
From \eqref{eq:Qgeneral}, $c(\mathcal Q)=(1-G)^{-2}=1+2G+3G^2$, hence
\[
c(\mathcal Q\boxtimes L)=1+(2G+2L)+(3G^2+2GL+L^2),
\]
and $[X_-]=c_2(\mathcal Q\boxtimes L)\,(G+R)$, which expands to (1) using $G^3=0$.
The intersection numbers follow by integrating against $G^2$, which is the class of a point of $\PP^2$.
For (2) and (3), the normal bundle sequence gives
\[
c(X_-)=\frac{(1+G)^3\,c(B)}{c(\mathcal Q\boxtimes L)\,(1+G+R)}\Big|_{X_-}
\]
with $c_1(B)=2L+R$; expanding to degree 3 and integrating against $[X_-]$ gives the displayed formulas.
Euler numbers of $X_\pm$ agree because the flop replaces disjoint smooth rational curves by smooth rational curves (and $X_+\simeq X_-$ in case (2)).
For (4), the change of the cubic form and of $c_2$ under a simultaneous Atiyah flop of $N$ disjoint curves is the standard local computation: on the blowup of the curves, 
$\Delta$ pulls back from both sides, and the two pullbacks differ by $(\Delta\cdot C_+)$ times the exceptional divisor $E\simeq\PP^1\times\PP^1$, with $E^3=2$ and $E|_E=\OO(-1,-1)$; see
\cite{LLW} for the general statement.  Every curve of class $[C_+]$ has intersection
number zero with $\operatorname{Pic}(B)$, so it lies in a fiber of $X_+\to B$; by the proof of Theorem~\ref{thm:cubicblock}(3) these fibers are finite except for the curves $C_x$.
Each $(-1,-1)$-curve is rigid with isolated, unobstructed multiple covers, so it contributes $1$ to the genus-zero Gopakumar--Vafa invariant of its class, and there are no other contributions.
\end{proof}

\begin{remark}
Proposition~\ref{prop:uniformtopology} reproduces the data of every example.
For the non-Gorenstein case ($B=\PP^4$, $L=2H$, $R=H$) it gives $G^2H=4$, $GH^2=8$, $H^3=7$, $c_2\cdot(H,G)=(46,36)$ and $e=-88$; 
moreover (4) with $\Delta=G$ and $\Delta\cdot C_+=-1$ gives $G^3=-16$ on $X_+$ and $0$ on $X_-$, and $c_2(X_+)\cdot G=36+32=68$, as in \cite{KanCompanion}; 
for Example~2 it gives \eqref{eq:cubic2}, $c_2\cdot(H,G,T)=(36,48,24)$ and $e=-56$; for Example~3 it gives
\eqref{eq:intersection3} and $e=-168$; and for Example~1 it gives
$e=-104$.
For Example~4 it gives \eqref{eq:cubic4}, $c_2\cdot(H_1,H_2,G)=(36,36,36)$, $e=-78$ and $G^3|_{X_{4,+}}=-6$.
\end{remark}

\begin{proposition}
\label{prop:picard}
In the setting of Lemma~\ref{lem:cubicsmooth}, assume that $L$ and $R$ are ample and that $h^{1,0}(B)=h^{2,0}(B)=0$.
Then
\[
H^2(X_\pm,\ZZ)\simeq\operatorname{Pic}(X_\pm)\simeq\ZZ F\oplus\operatorname{Pic}(B), \qquad h^{1,1}(X_\pm)=1+\rho(B),
\]
and on $X_-$ one may replace $F$ by $G=c_1(L)-F$.
\end{proposition}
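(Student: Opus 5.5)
We prove the statement on the nef phase $X_-$ and then transport it to $X_+$. By Theorem~\ref{thm:cubicblock}, $X_+$ and $X_-$ are isomorphic outside finitely many disjoint curves, so they are isomorphic in codimension one. Hence $\operatorname{Pic}(X_+)\simeq\operatorname{Pic}(X_-)$, compatibly with $G=c_1(L)-F$ by \eqref{eq:Gclass}. Moreover, $H^2(X_\pm,\ZZ)$ agree: removing curves from a smooth threefold does not change $H^2$, since they have real codimension four. Theorem~\ref{thm:cubicblock}(4), or its analogue for general $B$, gives $h^1(\OO_{X_\pm})=h^2(\OO_{X_\pm})=0$. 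The exponential sequence then yields $\operatorname{Pic}(X_\pm)\simeq H^2(X_\pm,\ZZ)$. For general $B$ I would repeat the Koszul argument of Theorem~\ref{thm:cubicblock}(4) on \eqref{eq:generalminus}. Kodaira vanishing for the ample bundles $L$ and $L+R$ supplies $H^*(B,-L)=H^*(B,-L-R)=0$ in degrees below four. The top-degree terms are handled by Serre duality together with $h^{1,0}(B)=h^{2,0}(B)=0$. So it remains to compute $H^2(X_-,\ZZ)$.

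The plan is a Lefschetz-type comparison $H^2(\PP^2\times B,\ZZ)\to H^2(X_-,\ZZ)$. The left side is $\ZZ G\oplus H^2(B,\ZZ)=\ZZ G\oplus\operatorname{Pic}(B)$, using $h^{1,0}(B)=h^{2,0}(B)=0$; I will assume $H^2(B,\ZZ)$ is torsion-free, or absorb torsion into $\operatorname{Pic}(B)$. The bundle $E=(\mathcal Q\boxtimes L)\oplus(\OO(1)\boxtimes R)$ has rank $3$ on the sixfold $\PP^2\times B$, and $X_-$ is the zero locus of a general section. If $E$ were ample, Sommese's Lefschetz theorem for zero loci of ample bundles would give an isomorphism on $H^i$ for $i<\dim X_-=3$, hence on $H^2$. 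However, $E$ is not ample: $\mathcal Q\boxtimes L$ restricted to $\PP^2\times\{x\}$ is $\mathcal Q$, which is only globally generated.

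\textbf{Main obstacle.} I would handle the lack of ampleness in one of two ways. The first is to apply Sommese's theorem in its $k$-ample form: $E$ is globally generated, and one computes that the morphism to a Grassmannian defined by $E$ has fibers of dimension at most $k$ with $k$ small. Then the Lefschetz range is $i<3-k$, which needs $k=0$. Alternatively, I would use the $\CC^*$-master space $Z$ with ample $L$, $R$ and verify finiteness directly. If that fails, I would compare with $X_+$ instead. There $X_+\subset M\subset\PP^1\times B$. The hypersurface $M$ is ample only relative to $B$, so I would project $M\to B$, a triple cover branched along a discriminant, and use $X_+\to B$, which is generically finite. Concretely, I would show injectivity of $\ZZ F\oplus\operatorname{Pic}(B)\to\operatorname{Pic}(X_+)$ using the intersection numbers of Proposition~\ref{prop:uniformtopology}, which give a nondegenerate pairing on curves. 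For surjectivity I would compute $b_2(X_\pm)=1+\rho(B)$ from the Euler number and $h^{2,1}$, or from a Hodge-theoretic Lefschetz argument on the ample-cone part. The remaining issue is primitivity of the lattice, which follows by exhibiting curves with $F\cdot C=1$ (the flopping curves or the fibres of the $\PP^1$-direction) and curves dual to $\operatorname{Pic}(B)$ (pullbacks of curves in $B$).

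Finally, $h^{1,1}(X_\pm)=\operatorname{rank}H^2=1+\rho(B)$, because $h^{2,0}(X_\pm)=0$. The replacement of $F$ by $G$ on $X_-$ is the unimodular change of basis $G=c_1(L)-F$.
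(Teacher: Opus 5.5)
Your preliminary steps are sound. Passing between $X_-$ and $X_+$ through the isomorphism in codimension one is fine. So is getting $h^1(\OO_{X_-})=h^2(\OO_{X_-})=0$ from the Koszul complex: there $H^4(B,-L)$ and $H^4(B,-L-R)$ vanish by Serre duality and $2L+R=-K_B$, and the hypothesis $h^{1,0}(B)=h^{2,0}(B)=0$ is what controls the outer terms $\OO$ and $K_{\PP^2\times B}$.

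The gap is the actual content of the proposition, the computation of $H^2(X_-,\ZZ)$, which none of your options delivers.
\begin{itemize}
\item Option (a) fails, as you suspect. $\mathcal Q$ has trivial quotients on lines: the first coordinate functional kills the image of $A(y)$ along $\{y_1=0\}$. So $E$ is not ample, and Sommese's theorem for the $1$-ample bundle $E$ gives only injectivity on $H^2$.
\item Your $X_+$ route starts from a false claim. $M$ is cut out by a section of $\OO(3)\boxtimes L$, which is ample on $\PP^1\times B$ because $L$ is. What is not ample is $\OO(-1)\boxtimes(L+R)$, which cuts $X_+$ out of $M$.
\item The fallback is circular or insufficient. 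The Euler number determines $b_2$ only if $h^{2,1}$ is known independently. Injectivity and saturation of $\ZZ F\oplus\operatorname{Pic}(B)$ say nothing about the rank of $\operatorname{Pic}(X)$. And $X_+\to B$ is not surjective, so pulling back curves from $B$ does not produce the dual curves you want.
\end{itemize}

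The missing idea is to treat the two summands of $E$ on opposite sides of the transition, at the level of the intermediate fourfolds.
\begin{itemize}
\item The minus quotient of the equations $D(u,v)y=\mathbf f$ alone is the smooth fourfold $Y_4=Z(\mathcal Q\boxtimes L)\subset\PP^2\times B$. Exactly as in Theorem~\ref{thm:cubicblock}(1), it is isomorphic to $M$ away from $\PP^2\times Z_B(f)$, resp.\ $\PP^1\times Z_B(f)$, which have codimension $2$ in $Y_4$ and $3$ in $M$.
\item Since $M$ is a smooth ample divisor in $\PP^1\times B$, Lefschetz gives $H^2(M,\ZZ)\simeq\ZZ F\oplus H^2(B,\ZZ)$ and $h^{1,0}(M)=h^{2,0}(M)=0$.
\item By birational invariance these Hodge numbers also vanish for $Y_4$, so the exponential sequence and the codimension-one isomorphism give $H^2(Y_4,\ZZ)\simeq\operatorname{Pic}(Y_4)\simeq\operatorname{Pic}(M)$.
\item Finally $X_-=Y_4\cap Z(\ell\cdot y)$, where $\ell\cdot y$ is a section of the ample bundle $\OO(1)\boxtimes R$. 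Lefschetz then gives $H^2(Y_4,\ZZ)\simeq H^2(X_-,\ZZ)$.
\end{itemize}
In this way the non-ample summand $\mathcal Q\boxtimes L$ is exchanged, through the birational map $Y_4\dashrightarrow M$, for the ample $\OO(3)\boxtimes L$. Only the ample summand is cut on the minus side. The argument is integral throughout, so no torsion-freeness assumption on $H^2(B,\ZZ)$ is needed, and it gives $h^1(\OO_{X_-})=h^2(\OO_{X_-})=0$ without your separate Koszul computation.
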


\begin{proof}
Let $Y_4=Z(\mathcal Q\boxtimes L)\subset\PP^2_y\times B$, the minus quotient of the auxiliary system without the equation $\ell\cdot y=0$.
It is a smooth fourfold by Bertini, and exactly as in Theorem~\ref{thm:cubicblock}(1) it is isomorphic to $M$ outside $\PP^2\times Z_B(f)$, respectively $\PP^1\times Z_B(f)$.
These loci have codimension 2 in $Y_4$ and 3 in $M$.
Since $L$ is ample, $M$ is a smooth ample divisor in the fivefold $\PP^1\times B$, so by the Lefschetz theorem $H^2(M,\ZZ)\simeq H^2(\PP^1\times B,\ZZ)=\ZZ F\oplus H^2(B,\ZZ)$, and $h^{1,0}(M)=h^{2,0}(M)=0$.
These Hodge numbers are birational invariants, so they also vanish for $Y_4$, and by the exponential sequence $H^2(Y_4,\ZZ)\simeq\operatorname{Pic}(Y_4)\simeq\operatorname{Pic}(M)$, 
the last isomorphism because $Y_4$ and $M$ are isomorphic in codimension 1 and smooth.
Finally $X_-=Y_4\cap Z(\ell\cdot y)$, and $\ell\cdot y$ is a section of $\OO_{\PP^2}(1)\boxtimes R$, which is ample on $\PP^2\times B$.
The Lefschetz theorem for the ample divisor $X_-$ of $Y_4$ gives $H^2(Y_4,\ZZ)\simeq H^2(X_-,\ZZ)$.
The statement for $X_+$ follows because $X_\pm$ are isomorphic in codimension 1.
\end{proof}

For the non-Gorenstein case ($L=2H$, $R=H$ on $\PP^4$) and for Example~3 ($L=H$, $R=3H$ on $\PP^4$) this gives $h^{1,1}(X_\pm)=2$, and for Example~4 below ($L=R=H_1+H_2$ on $\PP^2\times\PP^2$) it gives $h^{1,1}(X_\pm)=3$.

\begin{proposition}
\label{prop:Rzero}
Under the assumptions of Lemma~\ref{lem:cubicsmooth}, suppose that $R=0$, so that $\ell_1,\ell_2,\ell_3$ are constants.
Then for general data both $X_+$ and $X_-$ are complete intersections of 2 divisors of class $\OO(1)\boxtimes L$ in $\PP^1\times B$, that is, ordinary CICYs of type
\[
\left[\begin{array}{c|cc}\PP^1&1&1\\ B&L&L\end{array}\right].
\]
They are the two small resolutions of the determinantal hypersurface $\{f_0f_3-f_1f_2=0\}\subset B$, which has $N_{\rm flop}$ ordinary double points.
If moreover $L$ is ample and $h^{1,0}(B)=h^{2,0}(B)=0$, then $\operatorname{Pic}(X_\pm)=\ZZ F\oplus\operatorname{Pic}(B)$.
\end{proposition}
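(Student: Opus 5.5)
When $R=0$ the $\ell_j$ are constants, so the plan is to rewrite both phases explicitly as complete intersections and then identify them with the two small resolutions.

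\emph{Plus side.} With $\ell=(\ell_1,\ell_2,\ell_3)\neq 0$ constant, the equation $\ell\cdot y=0$ is linear in $y$. After a $GL_3$ change of coordinates I will set $\ell\cdot y=y_3$. This change is compensated by the corresponding change of the $f_i$ compatible with $D(u,v)$, and stays general.

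Setting $y_3=0$, the system $D(u,v)y=\mathbf f$ reads
\[
-vy_1=f_0,\qquad uy_1-vy_2=f_1,\qquad uy_2=f_2\ (\text{up to the }y_3\text{ term}),\qquad 0=f_3+\cdots .
\]
Rather than push the coordinates through directly, it is cleaner to eliminate $y$ using the $2\times2$ minors. The conditions say that $\mathbf f$ lies in $D(u,v)(\ker\ell)$, which is a rank-2 subbundle. So $X_+$ is cut out in $\PP^1\times B$ by the two $3\times3$ relations that express $\mathbf f\in D(\ker\ell)$.

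I will then choose the basis of $\ker\ell$ so that these relations become two sections of $\OO(1)\boxtimes L$, i.e.\ linear forms in $(u,v)$ with coefficients in $\langle f_i\rangle$. Concretely, for $\ell$ general I expect them to take the shape $uf_a-vf_b$ and $uf_c-vf_d$ with $\{a,b,c,d\}$ linear combinations of the $f_i$. After renaming the $f$'s (still general) this presents $X_+$ as $\{ug_0-vg_1=0,\ ug_2-vg_3=0\}$. Such a locus is one small resolution of $\{g_0g_3-g_1g_2=0\}$: its fiber over $x$ is a point unless all $g_i(x)=0$, in which case it is $\PP^1$.

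\emph{Minus side.} $X_-\subset\PP^2_y\times B$ lies in the line $\{\ell\cdot y=0\}\simeq\PP^1$, so it is a subvariety of $\PP^1_{y'}\times B$. On this line $G$ restricts to the hyperplane class. The condition that $\mathbf f(x)$ vanish in $\mathcal Q_y$, restricted to the line, means
\[
\operatorname{rank}\bigl(A(y)\mid\mathbf f\bigr)\le 2 .
\]
I will check that $\mathcal Q\boxtimes L$ restricted to $\PP^1\times B$ splits as $(\OO(1)\boxtimes L)^{\oplus 2}$. This follows from $c_1(\mathcal Q|_{\text{line}})=2$ together with the fact that the sequence \eqref{eq:Qgeneral} restricted to a line is $0\to\OO(-1)^2\to\OO^4\to\mathcal Q\to0$, where generality of the line should force the balanced splitting. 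Hence $X_-$ is also a complete intersection of two $\OO(1)\boxtimes L$ divisors in $\PP^1\times B$. Solving for $(u,v)$ over the locus $g_0g_3=g_1g_2$, its defining forms should be the transposed pair $y'_0g_0-y'_1g_2$ and $y'_0g_1-y'_1g_3$, which is the other small resolution.

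\emph{Remaining points.} By Lemma \ref{lem:cubicsmooth} the $g_i$ have exactly $N_{\rm flop}$ common zeros, and these are ordinary double points of the determinantal hypersurface. The Picard statement then follows from Proposition \ref{prop:picard}'s argument. Note that ampleness of $R=0$ fails there, but the last Lefschetz step is no longer needed: $X_+$ is now an ample complete intersection in $\PP^1\times B$, since $\OO(1)\boxtimes L$ is ample, so Lefschetz applies directly.

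The main obstacle is the explicit linear algebra identifying the two rank-2 conditions with the two transposed $2\times2$ linear systems, and verifying the splitting of $\mathcal Q$ on the line. I would do this first in coordinates with $\ell=(0,0,1)$.
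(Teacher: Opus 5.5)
\textbf{There is a genuine gap at the normalization step, and that step carries the content of the proposition.}

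A change of the $y$-coordinates is not compensated by a change of the $f_i$. If $(u,v)$ is kept fixed, an identity $D(u,v)g=hD(u,v)$ with constant $g,h$ forces $h$ to preserve, for every $p\in\PP^1$, the hyperplane of cubic forms vanishing at $p$. Hence $h$, and then $g$, is a scalar. The only admissible changes are those induced by $\operatorname{GL}(V)$ acting on $\PP^1$. Under this action $y$ is the binary quadric $y_1u^2+y_2uv+y_3v^2$ and $\ell$ is an element of $\Sym^2V$. There are two orbits on the $\PP^2$ of functionals: the dense orbit of rank-2 elements, and the conic $\ell_1\ell_3=\ell_2^2$ of rank-1 elements.

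Your normal form $\ell\cdot y=y_3$ is evaluation at $[0:1]$, so it lies on the conic. That is the non-general case, and there the conclusion fails. Setting $y_3=0$ gives $f_3=0$ and $u^2f_0+uvf_1+v^2f_2=0$, a complete intersection of type $(0,2)$ over $\PP^1$, not $(1,1)$.

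Equivalently, look at the quotients you use. On the plus side $\mathcal E=\OO^{\oplus4}/D(\ker\ell\otimes\OO(-1))$, and on the minus side $\mathcal Q|_{\{\ell\cdot y=0\}}$. Each is either $\OO(1)^{\oplus2}$ or $\OO\oplus\OO(2)$. A nonzero constant section $c$ of the dual exists iff $(c_1,c_2,c_3)$ and $(c_0,c_1,c_2)$ are both proportional to $\ell$, which happens iff $\ell_1\ell_3=\ell_2^2$.

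So the balanced splitting, which your proposal only ``expects'', is exactly the nondegeneracy of $\ell$ as a binary quadratic form. The check you plan to do first, with $\ell=(0,0,1)$, would give $\OO\oplus\OO(2)$ on both sides.

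The missing idea is to use the $\operatorname{GL}(V)$-equivariance of \eqref{eq:cubicsyzygy} to move a general $\ell$ to the rank-2 representative $y_2$. This changes the $f_i$ by an invertible linear map, so they stay general. Then $y_2=0$ gives $-vy_1=f_0$, $uy_1=f_1$, $-vy_3=f_2$, $uy_3=f_3$. Eliminating $y_1,y_3$ gives $uf_0+vf_1=uf_2+vf_3=0$. Eliminating $(u,v)$ instead gives $y_3f_0-y_1f_2=y_3f_1-y_1f_3=0$. These are the kernel loci of $\left(\begin{smallmatrix}f_0&f_1\\f_2&f_3\end{smallmatrix}\right)$ and of its transpose, with the same $f_i$ on both sides.

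That last point is also not delivered by your two separate splitting arguments. Showing each phase is some $(1,1)$ complete intersection does not show that the two $2\times2$ matrices are transposes of each other, and you need that to identify them as the two small resolutions of the same determinantal hypersurface.

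Your coordinate-free route can be repaired along these lines:
\begin{itemize}
\item prove $\mathcal E\cong\OO(1)^{\oplus2}$ for $\ell$ off the conic;
\item note that $H^0(\OO^{\oplus4})\to H^0(\mathcal E)$ is an isomorphism, so the $g_i$ are an invertible recombination of the $f_i$;
\item track the same identification on the minus side, so that the two matrices are transposes.
\end{itemize}
As written, though, the key step is unproved and your proposed verification uses a representative in the wrong orbit. The Picard argument, via Lefschetz for the ample complete intersection in $\PP^1\times B$, agrees with the paper.
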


\begin{proof}
Let $V^\vee=\langle u,v\rangle$.
The sequence \eqref{eq:cubicsyzygy} is $\operatorname{GL}(V)$-equivariant: the fiber of the kernel at $[u:v]$ consists of the cubic forms vanishing at that point, 
i.e.\ it is $\lambda_{[u:v]}\cdot\Sym^2V^\vee$ with $\lambda_{[u:v]}$ the linear form vanishing at $[u:v]$.
Hence $\OO(-1)^{\oplus3}\simeq\Sym^2V^\vee\otimes\OO(-1)$, and $y=(y_1,y_2,y_3)$ is the quadratic form $y_1u^2+y_2uv+y_3v^2$.
A constant functional $\ell$ on the $y$-space is therefore a point of $\PP(\Sym^2V)$, on which $\operatorname{PGL}(V)$ acts with a dense orbit, the forms of rank 2; the functional $y_2$ lies in this orbit.
After the corresponding change of coordinates on $\PP^1$ and of the basis of syzygies, which replaces the $f_i$ by other general sections of $L$, we may take $\ell\cdot y=y_2$.
Then the auxiliary equations become
\[
-vy_1=f_0,\qquad uy_1=f_1,\qquad -vy_3=f_2,\qquad uy_3=f_3,
\]
and eliminating $y_1,y_3$ gives $uf_0+vf_1=uf_2+vf_3=0$ in $\PP^1\times B$.
Eliminating $(u,v)$ instead gives $y_3f_0-y_1f_2=y_3f_1-y_1f_3=0$ in $\PP^1_{[y_1:y_3]}\times B$.
Thus $X_+$ and $X_-$ are the loci of $([s:t],x)$ such that $(s,t)$ lies in the kernel of the matrix
$\left(\begin{smallmatrix}f_0&f_1\\f_2&f_3\end{smallmatrix}\right)$, respectively of its transpose, evaluated at $x$.
Both map onto the determinantal hypersurface, isomorphically outside the common zero locus of the 4 entries, over which they have the fibers $C_x$, respectively $C'_x$.
For general sections of a globally generated $L$ the determinantal hypersurface is singular exactly at these $\int_Bc_1(L)^4$ points, where it has ordinary double points.
Finally, if $L$ is ample, then $\OO(1)\boxtimes L$ is ample on the fivefold $\PP^1\times B$, and the Lefschetz theorem for the complete intersection $X_+$ gives $\operatorname{Pic}(X_+)=\operatorname{Pic}(\PP^1\times B)$.
\end{proof}

\subsection{Products of projective spaces and Batyrev--Borisov duality}
\label{sec:BBduality}

For products of projective spaces the nef phase lies in a toric variety.
We describe it, determine exactly when it is Gorenstein, and show that in that case the exchange of Hodge numbers is a consequence of the Batyrev--Borisov duality theorem.

Let $B=\PP^{n_1}\times\cdots\times\PP^{n_k}$ with $\sum_{i=1}^kn_i=4$, let $H_i$ be the hyperplane classes, and write
\[
L=\sum_{i=1}^k a_iH_i,\qquad R=\sum_{i=1}^k r_iH_i,\qquad 2a_i+r_i=n_i+1,\qquad a_i,r_i\ge0,\quad L\ne0 .
\]
The auxiliary system of Section~\ref{sec:cubicblock} has Cox coordinates
\[
u,v,\qquad y_1,y_2,y_3,\qquad x^{(i)}_0,\ldots,x^{(i)}_{n_i}\ (1\le i\le k),
\]
graded by $\ZZ\oplus\operatorname{Pic}(B)\simeq\ZZ^{1+k}$ with
\[
\deg u=\deg v=(1,0),\qquad \deg y_j=(-1,L),\qquad \deg x^{(i)}_l=(0,H_i).
\]
Characters of the form $\vartheta=(-s,sL+c)$ with $s>0$ and $c=\sum_{i=1}^kc_iH_i$, $c_i>0$, form an open cone $\mathfrak C_-^{\rm tot}$; on it $y=0$ is unstable, so the GIT quotients are the nef side of the transition.
The walls inside $\mathfrak C_-^{\rm tot}$ are the hyperplanes $c_i/a_i=c_j/a_j$ ($a_i,a_j>0$), 
because the cone spanned by the weights of the coordinates other than $x^{(i)}$ contains $\vartheta$ exactly when $c_i/a_i\le c_j/a_j$ for all $j$ with $a_j>0$ (and $a_i>0$).
For $\vartheta$ in a chamber $\mathfrak C_{-,i_0}$, where $i_0$ is the unique minimizer of $c_i/a_i$, let $\mathcal A_{-,i_0}$ denote the quotient.
It is a projective simplicial toric eightfold, and
\[
X_-\subset\mathcal A_{-,i_0}
\]
is the common zero locus of sections of
\begin{equation}
L_1=L_2=L_3=L_4=(0,L),\qquad L_5=(-1,L+R),\qquad
\sum_{j=1}^5L_j=(-1,5L+R)=-K_{\mathcal A_{-,i_0}} .
\label{eq:fiveclassesgeneral}
\end{equation}

\begin{lemma}\label{lem:stabilizers}
In $\mathcal A_{-,i_0}$ the only points with nontrivial stabilizer lie on the stratum $\{x^{(i_0)}=0\}$, which is nonempty only if $a_{i_0}\ge1$.
There the stabilizer is $\mu_{a_{i_0}}$, acting with weight 1 on the $n_{i_0}+1$ transverse coordinates $x^{(i_0)}_l$ and trivially on all other coordinates.
Consequently:
\begin{enumerate}
\item $\mathcal A_{-,i_0}$ is smooth away from this stratum, and along it
has quotient singularities of type $\frac1{a_{i_0}}(1,\ldots,1)$ with $n_{i_0}+1$ entries;
\item a class $(d,\sum_{i=1}^kd_iH_i)$ is Cartier on $\mathcal A_{-,i_0}$ if and
only if $a_{i_0}\mid d_{i_0}$;
\item in particular $L_1,\ldots,L_4$ are always Cartier, while $L_5$ and
$-K_{\mathcal A_{-,i_0}}$ are Cartier if and only if $a_{i_0}\mid r_{i_0}$, and $\mathcal A_{-,i_0}$ is Gorenstein if and only if $a_{i_0}\mid r_{i_0}$.
\end{enumerate}
\end{lemma}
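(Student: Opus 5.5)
We compute stabilizers directly in the Cox construction. The torus is $(\CC^*)^{1+k}$, acting by $(\lambda,t_1,\dots,t_k)$ with $u,v\mapsto\lambda u,\lambda v$, $y_j\mapsto\lambda^{-1}t^{L}y_j$ (where $t^L=\prod t_i^{a_i}$), and $x^{(i)}_l\mapsto t_ix^{(i)}_l$.

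The first step is to describe the semistable locus for $\vartheta\in\mathfrak C_{-,i_0}$ using the cone criterion already recorded before the lemma. A point is semistable exactly when $\vartheta$ lies in the cone spanned by the weights of its nonvanishing coordinates. This forces $y\ne0$. It also forces $x^{(i)}\ne0$ for every $i$ with $a_i=0$: no other weight has a nonzero $H_i$-component, since $L$ has no $H_i$ term. Finally, it forces $x^{(i)}\ne0$ for every $i\ne i_0$ with $a_i>0$, because dropping $x^{(i)}$ requires $c_i/a_i\le c_j/a_j$ for all $j$, which contradicts the strict minimality of $i_0$. Only $x^{(i_0)}$ may vanish, and only when $a_{i_0}\ge1$. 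In that case the weights $(-1,L)$, $(1,0)$ and $(0,H_i)$ for $i\ne i_0$ do span a cone containing $\vartheta$, which one checks by writing $\vartheta=(-s,sL+c)$ with $c_{i_0}/a_{i_0}$ minimal.

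Next I compute the stabilizer of a semistable point.
\begin{itemize}
\item Each $x^{(i)}$ with $i\ne i_0$ is nonzero, so $t_i=1$ for $i\ne i_0$.
\item Some $y_j\ne0$, so $\lambda=t_{i_0}^{a_{i_0}}$.
\item If $x^{(i_0)}\ne0$, then $t_{i_0}=1$ and the stabilizer is trivial. (If $(u,v)=0$ nothing else constrains $\lambda$, but $\lambda$ is already determined.)
\item If $x^{(i_0)}=0$, then $(u,v)\ne0$ is forced, since otherwise only the weights of $y$ and $x^{(i)}$, $i\ne i_0$, remain and their cone misses $\vartheta$ (its $H_{i_0}$-component gives $c_{i_0}\le0$). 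So $\lambda=1$, and hence $t_{i_0}^{a_{i_0}}=1$.
\end{itemize}
The stabilizer is therefore $\mu_{a_{i_0}}\ni t_{i_0}$, which acts with weight $1$ on the $x^{(i_0)}_l$ and trivially on $u,v,y$ and the other $x$'s. Conversely, this $\mu_{a_{i_0}}$ does fix every such point.

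Part (1) follows from the Luna slice theorem. Locally along the stratum, the quotient is smooth coordinates times $\CC^{n_{i_0}+1}/\mu_{a_{i_0}}$ with weights $(1,\dots,1)$, and $\mathcal A_{-,i_0}$ is smooth elsewhere because the action is free.

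For part (2), a class $(d,\sum d_iH_i)$ is Cartier exactly when the character it defines is trivial on every stabilizer; this is the standard Cox criterion for simplicial toric varieties. On $\mu_{a_{i_0}}$ the character is $t_{i_0}\mapsto t_{i_0}^{d_{i_0}}$ ($\lambda$ is trivial there), so the condition is $a_{i_0}\mid d_{i_0}$.

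Part (3) is then substitution. For $L_1,\dots,L_4$ we have $d_{i_0}=a_{i_0}$, which is always divisible. For $L_5$ we have $d_{i_0}=a_{i_0}+r_{i_0}$. For $-K$, the class in \eqref{eq:fiveclassesgeneral}, we have $d_{i_0}=5a_{i_0}+r_{i_0}$. Both are divisible by $a_{i_0}$ exactly when $a_{i_0}\mid r_{i_0}$, and Gorenstein means $-K$ is Cartier, since the variety is normal and Cohen--Macaulay.

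I expect the main obstacle to be the semistability bookkeeping, in particular showing that $(u,v)\ne0$ on the stratum $x^{(i_0)}=0$, which is what kills $\lambda$, and confirming that this stratum is nonempty. Both reduce to elementary cone-membership checks of the kind already used for the walls.
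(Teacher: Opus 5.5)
Your proposal is correct and takes essentially the same route as the paper: the cone criterion for semistability, then the stabilizer computation ($(u,v)\neq0$ on the stratum forces $\lambda=1$, and a nonzero $y_j$ forces $t_{i_0}^{a_{i_0}}=1$), then the criterion that a class is Cartier when its character is trivial on stabilizers, and substitution for (3). The differences are cosmetic. You treat the factors with $a_i=0$ explicitly, and you derive the Gorenstein claim from $K$ being Cartier on a normal Cohen--Macaulay variety, whereas the paper uses the local criterion that $\frac1a(1,\ldots,1)$ with $n+1$ entries is Gorenstein iff $a\mid n+1=2a+r$.
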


\begin{proof}
A point is semistable for $\vartheta$ exactly when $\vartheta$ lies in the cone generated by the weights of its nonzero coordinates.
If all coordinates of two factors $x^{(i)},x^{(j)}$ vanish, the remaining weights span a cone meeting $\mathfrak C_-^{\rm tot}$ only on the wall $c_i/a_i=c_j/a_j$; if $y=0$ the remaining weights have nonnegative first component; 
and if $u=v=0$ together with $x^{(i)}=0$ the remaining weights force $c_i=0$.
Thus a semistable point with some $x^{(i)}=0$ has $(u,v)\ne0$, $y\ne0$, $x^{(j)}\ne0$ for $j\ne i$, and by the criterion above $i=i_0$.
An element $(\lambda,\mu_1,\ldots,\mu_k)$ of $(\CC^*)^{1+k}$ fixing such a point has $\lambda=1$ (from $u$ or $v$), $\mu_j=1$ for $j\ne i_0$, and $\lambda^{-1}\mu^{L}=\mu_{i_0}^{a_{i_0}}=1$ (from $y$).
It acts on $x^{(i_0)}_l$ by $\mu_{i_0}$ and trivially on the other coordinates.
Every other semistable point has $(u,v)\ne0$ or $u=v=0$ with all $x^{(j)}\ne0$ and $y\ne0$, and in both cases the stabilizer is trivial.
This proves the first assertion and (1), since a simplicial toric variety with trivial stabilizers is smooth, and the normal slice to the stratum is $\CC^{n_{i_0}+1}$ with the stated action.
A class is Cartier at a point exactly when the corresponding character is trivial on the stabilizer; the character $(d,\sum_{i=1}^kd_iH_i)$ restricts to $\mu_{i_0}\mapsto\mu_{i_0}^{d_{i_0}}$, which gives (2).
For (3), the $H_{i_0}$-coefficients of $(0,L)$, $L_5$ and $-K$ are $a_{i_0}$, $a_{i_0}+r_{i_0}$ and $5a_{i_0}+r_{i_0}$.
Finally the cyclic quotient singularity $\frac1a(1,\ldots,1)$ with $n+1\ge2$ entries is Gorenstein if and only if $a\mid n+1=2a+r$, i.e.\ $a\mid r$.
\end{proof}

We use Batyrev's stringy $E$-function $E_{\rm st}(V;u,v)$.
In the Gorenstein toric complete-intersection setting considered below, the relevant stringy $E$-functions are polynomials, and we write
\[
E_{\rm st}(V;u,v)
=\sum_{p,q=0}^{\dim V}(-1)^{p+q}h_{\rm st}^{p,q}(V)\,u^pv^q
\]
to define the stringy Hodge numbers.
If $V$ admits a crepant resolution $\widetilde V\to V$, then
\[
E_{\rm st}(V;u,v)=E(\widetilde V;u,v), \qquad
h_{\rm st}^{p,q}(V)=h^{p,q}(\widetilde V).
\]
We use these facts below in the standard Batyrev--Borisov setting
\cite{Batyrev,BBH}.

\begin{proposition}
\label{prop:BBduality}
In the setting above, assume that
\begin{equation}
a_i\mid r_i\qquad\text{for every $i$ with $a_i\ge2$},
\label{eq:gorcondition}
\end{equation}
and that the data are general.
Then:
\begin{enumerate}
\item for every $i_0$ with $r_{i_0}/a_{i_0}=\min_{a_j>0}r_j/a_j$ (such
$i_0$ exist, and $X_-$ lies in every $\mathcal A_{-,i_0}$, since the semistable locus of $Z$ is the same for all chambers of $\mathfrak C_-^{\rm tot}$), 
the quotient $\mathcal A_{-,i_0}$ is a projective Gorenstein toric variety on which $-K$ is Cartier, nef and big; its anticanonical model $\PP_\Delta$ is a Gorenstein toric Fano variety, 
$\Delta$ is reflexive, and the parts $E_1=\{u,y_1\}$, $E_2=\{v,y_2\}$, $E_3$ and $E_4$ each consisting of $a_i$ of the coordinates $x^{(i)}_l$ for every $i$, 
and $E_5=\{y_3\}\cup\{\text{remaining }x^{(i)}_l\}$ define a nef partition of $-K_{\PP_\Delta}$ with classes
\eqref{eq:fiveclassesgeneral};
\item the Calabi--Yau threefolds $X_+$ and $X_-$ have the Hodge
numbers of a general Calabi--Yau complete intersection $V$ of this nef partition;
\item if $Y$ is a crepant resolution of a general Calabi--Yau complete
intersection $V^*$ associated with the dual nef partition, then
\[
h^{p,q}(Y)=h^{3-p,q}(X_+)\qquad(0\le p,q\le3);
\]
in particular $h^{1,1}(Y)=h^{2,1}(X_+)$ and $h^{2,1}(Y)=h^{1,1}(X_+)$.
Such a crepant resolution exists.
\end{enumerate}
\end{proposition}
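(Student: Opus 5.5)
The plan has three parts, matching the three assertions. First comes the toric and nef-partition claim (1), which is combinatorial. Then the Hodge-theoretic comparison (2). Finally (3), which is a direct application of the Batyrev--Borisov duality theorem.

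For (1), I would choose $i_0$ minimizing $r_i/a_i$ among $a_i>0$. The minimum exists because $L\ne0$. Under \eqref{eq:gorcondition}, Lemma~\ref{lem:stabilizers}(3) gives that $\mathcal A_{-,i_0}$ is Gorenstein with $-K$ Cartier. The case $a_{i_0}=1$ is smooth, and otherwise $a_{i_0}\mid r_{i_0}$. To show that $-K=(-1,5L+R)$ is nef, I would write it as the sum of the five classes \eqref{eq:fiveclassesgeneral}. Each of these is the class of a monomial in the stated parts $E_j$:
\begin{itemize}
\item $E_1$ contains $uy_1$ of degree $(0,L)$, and similarly for $E_2$;
\item $E_3,E_4$ each contain products of $a_i$ coordinates $x^{(i)}$ for each $i$, giving $(0,L)$;
\item $E_5$ gives $y_3\prod x$ of degree $(-1,L)+(0,\sum(n_i+1-2a_i)H_i)=(-1,L+R)$.
\end{itemize}
The count $2a_i+r_i=n_i+1$ ensures exactly $n_i+1$ coordinates of factor $i$ are distributed. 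Each $L_j$ is then globally generated, since its monomial sections have no common zero on the semistable locus. For $E_1$ this uses $(u,v)$ and $y$, and one uses all of $E_1\cup E_2$ symmetrically via the other monomials $u^ay^b$ type. Hence each $L_j$ is nef, and I need the minimizing choice of $i_0$ to guarantee this chamberwise. Bigness follows because $-K$ is a sum of nefs whose top power is positive, checked by the fact that the sections of $-K$ separate the torus. Then $\PP_\Delta=\operatorname{Proj}\bigoplus H^0(-mK)$ is Gorenstein toric Fano, $\Delta$ is reflexive, and the partition into the $E_j$ is by definition a nef partition in the sense of Borisov.

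For (2), I would observe that the sections defining $X_-$ in \eqref{eq:generalminus} are exactly general sections of $L_1,\dots,L_5$ pulled back along $\mathcal A_{-,i_0}\to\PP_\Delta$. The key point is that the map is crepant, because it is the anticanonical model. A general complete intersection $V\subset\PP_\Delta$ of the nef partition therefore has $X_-$, the zero locus in $\mathcal A_{-,i_0}$, as a crepant partial resolution. Here I expect the main obstacle: one must show that $X_-$ avoids the singular stratum $\{x^{(i_0)}=0\}$ or meets it harmlessly. The codimension of this stratum is $n_{i_0}+1\ge2$. The condition that $X_-$ is smooth comes from Lemma~\ref{lem:cubicsmooth}, and I would argue that a general fivefold complete intersection misses strata of codimension $\ge4$. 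For lower codimension I would use that the singularity is Gorenstein terminal or canonical, and that $X_-$ is smooth by the explicit Bertini argument. This would give $E_{\rm st}(V)=E(X_-)$, since stringy $E$-functions are invariant under crepant morphisms and $X_-$ is smooth. Theorem~\ref{thm:cubicblock} then gives $E(X_+)=E(X_-)$, either because the two are isomorphic or because flops preserve Hodge numbers (Batyrev/Kontsevich). This yields (2).

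For (3), I would invoke the Batyrev--Borisov theorem \cite{BB,BBH}, which states $E_{\rm st}(V^*;u,v)=(-u)^3E_{\rm st}(V;u^{-1},v)$. Combined with (2), this gives $h^{p,q}_{\rm st}(V^*)=h^{3-p,q}(X_+)$. For the existence of $Y$, I would take a maximal projective crepant (MPCP) subdivision of the dual Gorenstein Fano. In dimension three, a general nef complete intersection in it has at worst isolated terminal Gorenstein quotient singularities, which miss a general threefold. This gives a smooth crepant resolution with $h^{p,q}(Y)=h^{p,q}_{\rm st}(V^*)$.
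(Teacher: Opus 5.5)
Your outline of (1) and (3) follows the paper in substance, but step (2) has a genuine gap. It rests on your assertion that the sections defining $X_-$ ``are exactly general sections of $L_1,\dots,L_5$''.

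As a statement about sections this is false: the auxiliary equations form a proper subfamily of $\prod_jH^0(\mathcal A_{-,i_0},L_j)$. A section of $(0,L)$ is $g(x)$ plus an \emph{arbitrary} bilinear form in $(u,v)$ and $y$, whereas the auxiliary system fixes the four bilinear parts to be the rows of $D(u,v)y$. Likewise $H^0(L_5)$ contains, besides the $\ell\cdot y$, monomials $u^kv^{k'}y^{\gamma}g(x)$ with $|\gamma|=k+k'+1$ and $g\in H^0(B,R-(k+k')L)$ whenever $R-(k+k')L$ is effective, as the paper notes. So you cannot simply declare $X_-$ to be the crepant partial resolution of a general, $\Delta$-regular $V_\Delta$. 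Both your comparison of $E(X_-)$ with the stringy $E$-function in Batyrev--Borisov duality and your general-position argument for missing the singular stratum presuppose this.

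The missing idea, which is how the paper argues, is deformation invariance. $X_-$ is \emph{some} complete intersection of sections of the $L_j$, and it lies in the smooth locus of $\mathcal A_{-,i_0}$: on $Z$, $x^{(i_0)}=0$ forces $\mathbf f=0$, hence $y=0$ by injectivity of $D(u,v)$, which is unstable. A general $V$ is smooth because the $L_j$ are base-point free. The stratum $\{x^{(i_0)}=0\}$ is singular only when $a_{i_0}\ge2$, and then it has dimension $8-(n_{i_0}+1)\le4$ since $n_{i_0}+1=2a_{i_0}+r_{i_0}\ge4$; so your fallback to terminal singularities in low codimension never arises. The smooth threefold complete intersections form a nonempty Zariski-open, hence connected, subset of the irreducible parameter space. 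Thus $X_-$ and $V$ are fibers of one smooth projective family, and $h^{p,q}(X_-)=h^{p,q}(V)=h^{p,q}_{\rm st}(V_\Delta)$.

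Your identification can also be rescued up to automorphisms of $\mathcal A_{-,i_0}$, but some argument must be given. A general $4$-dimensional space of bilinear forms is a $\operatorname{GL}_2\times\operatorname{GL}_3$-translate of the span of the rows of $D(u,v)y$, whose annihilator is the generic $2\times3$ pencil. The Koszul complex of $Y_4=Z(\mathcal Q\boxtimes L)$, with $\mathcal Q^*(1)$ and $\OO_{\PP^2}(-1)$ acyclic, then shows that every section of $L_5$ restricts on $Y_4$ to some $\ell\cdot y$.

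In (1), proving nefness by global generation is a legitimate alternative to the paper's chamber argument. There a class is nef on the toric GIT quotient iff it lies in the closed chamber: $(0,L)$ lies on every wall, and $L_5$, $-K$ correspond to $c=R$, $c=4L+R$, which lie in $\overline{\mathfrak C_{-,i_0}}$ exactly when $r_{i_0}/a_{i_0}$ is minimal. However, you have not checked global generation where it matters. On the stratum $\{x^{(i_0)}=0\}$ one has $(u,v)\ne0$, $y\ne0$ and $x^{(i)}\ne0$ for $i\ne i_0$, and the $E_5$-monomial $y_3\prod x$ vanishes there when $r_{i_0}>0$. A nonvanishing section of $L_5$ there is $u^ky_j^{k+1}g(x)$ (or the same with $v$), with $k=r_{i_0}/a_{i_0}$ and $g$ a monomial of degree $R-kL$ in the $x^{(i)}$, $i\ne i_0$. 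It exists precisely because $a_{i_0}\mid r_{i_0}$ and $r_{i_0}/a_{i_0}$ is minimal.

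You also need that each coordinate hyperplane contains points that are stable for $\vartheta=-K$. This makes $\mathcal A_{-,i_0}\to\PP_\Delta$ small and all $9+k$ rays vertices of $\Delta^\circ$; otherwise the $E_j$ do not partition the vertex set as a nef partition requires.

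In (3), the singularities of the maximal projective crepant partial desingularization of the $8$-dimensional dual are not isolated. The point is that its singular locus has codimension at least $4$, so a general codimension-$5$ complete intersection of base-point-free classes avoids it.
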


\begin{proof}
(1) By Lemma~\ref{lem:stabilizers} and \eqref{eq:gorcondition}, all 5 classes and $-K$ are Cartier on every $\mathcal A_{-,i_0}$.
For $a_i\ge1$, a point with $x^{(i)}=0$ has $\mathbf f=0$, hence $y=0$ by injectivity of $D(u,v)$ on the semistable locus, 
and for $a_i=0$ the points with $x^{(i)}=0$ are unstable; so the semistable locus of $Z$ is the same for all chambers, and $X_-$ lies in every $\mathcal A_{-,i_0}$.
The class $(0,L)$ lies on every wall $c_i/a_i=c_j/a_j$, hence in the closure of every chamber, so $L_1,\ldots,L_4$ are nef on every $\mathcal A_{-,i_0}$.
The classes $L_5=(-1,L+R)$ and $-K=(-1,L)+(0,4L+R)$ correspond to $c=R$ and $c=4L+R$, for which $c_i/a_i=r_i/a_i$ and $4+r_i/a_i$; 
so they lie in $\overline{\mathfrak C_{-,i_0}}$ exactly when $r_{i_0}/a_{i_0}$ is minimal, and they are nef on those $\mathcal A_{-,i_0}$.
The class $-K$ lies in the open cone $\mathfrak C_-^{\rm tot}$, so it is big, and it is ample on the quotient $\PP_\Delta$ at the character $\vartheta=-K$, to which these $\mathcal A_{-,i_0}$ map by small crepant morphisms.
Since $-K$ is Cartier and nef on $\mathcal A_{-,i_0}$, the vertices of
\[
\Delta=\{m\mid \langle m,v_\rho\rangle\ge-1\ \text{for all rays }\rho\}
\]
are the lattice points $m_\sigma$ attached to maximal cones, so $\Delta$ is a lattice polytope with $0$ in its interior, and its polar $\Delta^\circ=\operatorname{conv}\{v_\rho\}$ is a lattice polytope; 
thus $\Delta$ is reflexive and $\PP_\Delta$ is Gorenstein Fano.
Each coordinate hyperplane $\{x_\rho=0\}$ contains $\vartheta$-stable points for $\vartheta=-K$, so every $v_\rho$ is a vertex of $\Delta^\circ$ and the $9+k$ toric divisors of $\mathcal A_{-,i_0}$ are exactly those of $\PP_\Delta$.
The parts $E_j$ partition these rays, and $\sum_{\rho\in E_j}\deg x_\rho=L_j$ by the choice of parts, using $n_i+1=2a_i+r_i$:
\[
(1,0)+(-1,L)=(0,L),\qquad \sum_{i=1}^k a_i(0,H_i)=(0,L),
\]
\[
(-1,L)+\sum_{i=1}^kr_i(0,H_i)=(-1,L+R).
\]
Hence $(E_1,\ldots,E_5)$ is a nef partition in the sense of \cite{BB}.

(2) Since $\mathcal A_{-,i_0}\to\PP_\Delta$ is small, the spaces of sections of the $L_j$ on the two varieties coincide, 
and a general complete intersection $V\subset\mathcal A_{-,i_0}$ is the strict transform of a general (i.e.\ $\Delta$-regular) complete intersection $V_\Delta$ of the nef partition, with $V\to V_\Delta$ crepant.
The $L_j$ are base-point free, so $V$ is smooth away from the singular stratum of Lemma~\ref{lem:stabilizers}; 
that stratum has dimension $8-(n_{i_0}+1)$, and $a_{i_0}\ge2$ forces $n_{i_0}+1=2a_{i_0}+r_{i_0}\ge4$, so a general codimension-5 $V$ misses it.
Hence $V$ is smooth and $h^{p,q}(V)=h^{p,q}_{\rm st}(V_\Delta)$.
The threefold $X_-$ is also a complete intersection of sections of the $L_j$: the sections of $(0,L)$ are $f(x)+(\text{bilinear in }(u,v)\text{ and }y)$, and the equations $D(u,v)y=\mathbf f$, $\ell\cdot y=0$ are such sections.
Moreover $X_-$ misses the singular stratum, since $x^{(i_0)}=0$ forces $\mathbf f=0$ and then $y=0$ by injectivity of $D(u,v)$; so $X_-$ is smooth.
(The family of all complete intersections is in general larger than the family of auxiliary systems: for instance, when $R-L$ is effective the sections of $L_5$ contain additional terms $u\,y_jy_l\,g(x)$.) 
The complete intersections that are smooth of dimension 3 form a nonempty Zariski open subset of an irreducible parameter space, hence a connected one, 
so $X_-$ is a smooth fiber of a smooth projective family containing $V$, and $h^{p,q}(X_-)=h^{p,q}(V)$.
In case (2) of Theorem~\ref{thm:cubicblock}, $X_+\simeq X_-$.
In case (3), $X_\pm$ are isomorphic in codimension 1, so $h^{1,1}(X_+)=\rho(X_+)=\rho(X_-)=h^{1,1}(X_-)$, and the flop replaces disjoint smooth rational curves by smooth rational curves, 
so $e(X_+)=e(X_-)$; since both are Calabi--Yau threefolds, all Hodge numbers agree.
(More generally, birational Calabi--Yau manifolds have equal Hodge numbers \cite{BatyrevBetti,DenefLoeser}.)

(3) The Batyrev--Borisov mirror duality theorem \cite{BBH} gives
\[
E_{\rm st}(V^*;u,v)=(-u)^3E_{\rm st}(V_\Delta;u^{-1},v),
\]
i.e.\ $h^{p,q}_{\rm st}(V^*)=h^{3-p,q}_{\rm st}(V_\Delta)$.
The dual Gorenstein toric Fano variety admits a maximal projective crepant partial desingularization whose singular locus has codimension at least 4
\cite{Batyrev}; a general 3-dimensional complete intersection of the
pulled-back nef partition avoids it and gives a smooth crepant resolution $Y$ of $V^*$, with $h^{p,q}(Y)=h^{p,q}_{\rm st}(V^*)$ \cite{BBH}.
Combining with (2) proves the claim.
\end{proof}

\begin{corollary}\label{cor:BBexamples}
Proposition~\ref{prop:BBduality} applies to every configuration in Table~\ref{tab:flopcounts} except the unique case with a non-Gorenstein nef-phase ambient space.
In particular it applies to Example~1 ($a=(2,1)$, $r=(0,0)$), Example~2 ($a=(2,0)$, $r=(0,2)$), Example~3 ($a=1$) and Example~4 ($a=r=(1,1)$), 
so the mirror Hodge numbers $(55,3)$, $(31,3)$, $(86,2)$ and $(42,3)$ of Section~\ref{sec:mirrors} are consequences of Batyrev--Borisov duality.
For the configurations of Table~\ref{tab:flopcounts} with $a_i\le1$ for all $i$, that is, all except the non-Gorenstein case and Examples~1 and~2 (in particular Examples~3 and~4), 
every nef-phase ambient space $\mathcal A_{-,i_0}$ is smooth by Lemma~\ref{lem:stabilizers}.
\end{corollary}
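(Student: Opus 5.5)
The corollary follows by checking the hypotheses of Proposition~\ref{prop:BBduality} row by row in Table~\ref{tab:flopcounts}, and then checking the smoothness criterion of Lemma~\ref{lem:stabilizers}. Nothing new is needed beyond bookkeeping.

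\emph{First assertion.} The Gorenstein condition \eqref{eq:gorcondition} only involves factors with $a_i\ge2$. From the list of admissible pairs in the proof of Proposition~\ref{prop:classification}, the only pairs with $a_i\ge2$ are $(2,0)$ on $\PP^3$ and $(2,1)$ on $\PP^4$. Since $2\mid0$ and $2\nmid1$, condition \eqref{eq:gorcondition} holds exactly when no factor is $\PP^4$ with $(a,r)=(2,1)$. That factor appears only in the row $B=\PP^4$, $L=2H$, $R=H$, which is the non-Gorenstein case; this is Proposition~\ref{prop:classification}(2).

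Proposition~\ref{prop:BBduality} also assumes that the data are general, so that Lemma~\ref{lem:cubicsmooth} and Theorem~\ref{thm:cubicblock} apply. In every row $L$ and $R$ are nonnegative combinations of hyperplane classes, hence globally generated, so this hypothesis holds. For the rows marked $^\dagger$ the generalized sections are taken of the form $\ell\cdot y$, as noted in the table caption.

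\emph{The four examples.} These are read off the table:
\begin{itemize}
\item Example~1: $B=\PP^3\times\PP^1$, $L=2H+T$, $R=0$, so $a=(2,1)$, $r=(0,0)$, and $2\mid0$.
\item Example~2: $L=2H$, $R=2T$, so $a=(2,0)$, $r=(0,2)$, and $2\mid0$ on the $\PP^3$ factor.
\item Example~3: $a=1$, so the condition is vacuous.
\item Example~4: $a=r=(1,1)$, so the condition is vacuous.
\end{itemize}
Proposition~\ref{prop:BBduality}(3) then gives $h^{1,1}(Y)=h^{2,1}(X_+)$ and $h^{2,1}(Y)=h^{1,1}(X_+)$. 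With the Hodge numbers $(3,55),(3,31),(2,86),(3,42)$ from the introduction, this yields the mirror values $(55,3),(31,3),(86,2),(42,3)$.

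\emph{Smoothness statement.} By Lemma~\ref{lem:stabilizers}, the stabilizer on the stratum $\{x^{(i_0)}=0\}$ of $\mathcal A_{-,i_0}$ is $\mu_{a_{i_0}}$, and all other points have trivial stabilizer. If every $a_i\le1$, then $\mu_{a_{i_0}}$ is trivial for every admissible $i_0$ (for $a_{i_0}=0$ the stratum is empty). A simplicial toric variety with trivial stabilizers is smooth, so every $\mathcal A_{-,i_0}$ is smooth.

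From the pair list, $a_i\ge2$ occurs only for $(2,0)$ on $\PP^3$ and $(2,1)$ on $\PP^4$. Scanning the table, these appear only in the rows of Example~1, Example~2 and the non-Gorenstein case. All other rows, including Examples~3 and~4, therefore have $a_i\le1$ for all $i$.

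The only point needing care is confirming that no other row contains a factor with $a_i\ge2$. This is settled by the explicit pair list, so I do not expect a genuine obstacle.
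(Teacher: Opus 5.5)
Your proposal is correct and follows the argument the paper intends. The first assertion is Proposition~\ref{prop:classification}(2): only the pair $(a,r)=(2,1)$ on $\PP^4$ violates \eqref{eq:gorcondition}. The mirror Hodge numbers come from Proposition~\ref{prop:BBduality}(3), and smoothness comes from the $\mu_{a_{i_0}}$ stabilizer description in Lemma~\ref{lem:stabilizers}. One minor correction: the Hodge numbers of $X_+$ are established in Sections~\ref{sec:examples12}--\ref{sec:example4}, with $h^{1,1}(X_2)=3$ taken from \cite{AAGGL}, not in the introduction.
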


\begin{remark}\label{rem:nonGorenstein}
The example of \cite{KanCompanion}, the only configuration in Table~\ref{tab:flopcounts} with a non-Gorenstein nef-phase ambient space by Proposition~\ref{prop:classification}(2), has $B=\PP^4$, $a=2$ and $r=1$.
Here $k=1$, there is a single chamber, and Lemma~\ref{lem:stabilizers} shows that the nef-phase ambient space has a 3-dimensional stratum of $\frac12(1,1,1,1,1)$-singularities, of age $5/2$, on which $-K$ and $L_5$ are only $\QQ$-Cartier.
Its 5 parts therefore form a nef decomposition only at the level of $\QQ$-Cartier classes, the reflexive Gorenstein cone on which the Batyrev--Borisov proof rests does not exist, 
and the mirror Hodge numbers $(46,2)$ are established in \cite{KanCompanion} by an explicit crepant compactification.
\end{remark}

\subsection{Fibrations and the fundamental period}
\label{sec:fibperiod}

Assume now that $B=\PP^{n_1}\times\cdots\times\PP^{n_k}$ as in Section~\ref{sec:BBduality}, with $L=\sum_{i=1}^ka_iH_i\neq0$ and $R=\sum_{i=1}^kr_iH_i$.

\begin{proposition}
\label{prop:twofibrations}
\begin{enumerate}
\item The projection $X_+\to\PP^1_{[u:v]}$ is a K3 fibration: its general
fiber is the smooth complete intersection $S=Z_B\bigl(P(u,v),\ \ell\cdot y\bigr)$ of divisors of classes $L$ and $L+R$, and $S$ is a K3 surface.
\item If $L^2R\neq0$ in $A^3(B)$, the projection $X_-\to\PP^2_y$ is surjective and its
general fiber is the smooth complete intersection $E_y$ of divisors of classes $L,L,R$ in $B$; thus $\omega_{E_y}\simeq\OO_{E_y}$ and $E_y$ is a disjoint union of genus-one curves, with
\[
D\cdot E_y=\int_BL^2RD\qquad(D\in\operatorname{Pic}(B)),\qquad G\cdot E_y=0.
\]
In particular, if $E_y$ is connected, the index of the fibration divides $\gcd_{1\le i\le k}\int_BL^2RH_i$.
If $R=0$, the $\ell_j$ are constants, $X_-$ lies over a line in $\PP^2_y$, and its fibers over that line are K3 surfaces of classes $L,L$.
\item If the $i$-th factor of $B$ is a $\PP^2$, write $B=\PP^2\times B'$.
Then the fibers of the projection $X_-\to\PP^2$ to that factor are the zero loci in $\PP^2_y\times B'$ of the restriction of $(\mathcal Q\boxtimes L)\oplus(\OO(1)\boxtimes R)$; 
they have trivial dualizing sheaf, so a smooth nonempty fiber is a disjoint union of genus-one curves.
Its class is $H_i^2$, with
\[
D\cdot H_i^2=\int_B(2L+3R)H_i^2D\quad(D\in\operatorname{Pic}(B)),\qquad G\cdot H_i^2=\int_B(L^2+2LR)H_i^2 .
\]
\end{enumerate}
\end{proposition}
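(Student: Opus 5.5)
Each part is a fiber computation in one of the two presentations, and I would take them in order.

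For (1), fix a general $[u:v]\in\PP^1$. On $M$ the fiber over $[u:v]$ is $\{x\in B: P(u,v,x)=0\}$, a divisor of class $L$ in $B$. On this fiber the section $s=\ell\cdot y$ restricts to a section of $L+R$, because $\OO_{\PP^1}(-1)$ becomes trivial on a fiber. So the fiber of $X_+$ is the complete intersection $S=Z_B(P(u,v),\ell\cdot y)$.

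To see that $S$ is smooth for general $[u:v]$, apply generic smoothness to the morphism $X_+\to\PP^1$; this uses that $X_+$ is smooth by Lemma~\ref{lem:cubicsmooth}. We also need the map to be surjective with two-dimensional fibers. This holds because the curves $C_x$ dominate $\PP^1$ when $N_{\rm flop}>0$. In general it follows from a dimension count, since $X_+$ is irreducible of dimension 3, hence connected.

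Adjunction on $B$ gives $\omega_S=(K_B+2L+R)|_S=\OO_S$ by \eqref{eq:CYcondition}. To get $h^1(\OO_S)=0$, use the Koszul complex on $B$. This needs $H^1(\OO_B)=H^2(\OO_B)=0$ together with the vanishing of $H^*(B,-L)$, $H^*(B,-L-R)$ and $H^*(B,-2L-R)=H^*(K_B)$ in low degrees. These are the vanishings already used in Theorem~\ref{thm:cubicblock}(4), via Künneth on a product of projective spaces. Connectedness follows from $h^0(\OO_S)=1$, which comes out of the same computation. Hence $S$ is a K3 surface.

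For (2), fix a general $[y]\in\PP^2_y$. Restricted to $\{y\}\times B$, the bundle $(\mathcal Q\boxtimes L)\oplus(\OO(1)\boxtimes R)$ becomes $L^{\oplus2}\oplus R$, because $\mathcal Q_y$ is a fixed two-dimensional vector space. So $E_y$ is the zero locus of two sections of $L$ and one section of $R$, namely the components of $\mathbf f$ in $\mathcal Q_y$ and $\ell\cdot y$. Its class is $L^2R$. Surjectivity holds when $L^2R\neq0$: the restricted sections come from a generating linear system, so for every $y$ the locus is nonempty of the expected dimension 1. Smoothness of a general fiber again follows from generic smoothness of $X_-\to\PP^2_y$, and adjunction gives $\omega_{E_y}=\OO_{E_y}$. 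A smooth curve with trivial canonical bundle is a disjoint union of elliptic curves.

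The intersection numbers follow from the fiber class $E_y=G^2$ together with Proposition~\ref{prop:uniformtopology}(1): we get $G\cdot G^2=0$ and $D\cdot G^2=\int_BL^2RD$. The index of the fibration divides the degree of every divisor on a fiber. When $E_y$ is connected this gives the stated divisibility, with $\operatorname{Pic}$ generated by $G$ and the $H_i$ (Proposition~\ref{prop:picard}).

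If $R=0$, the equation $\ell\cdot y=0$ is a linear form with constant coefficients, so it cuts out a line in $\PP^2_y$. The fibers over that line are $Z_B$ of two sections of $L$. These are K3 surfaces by the adjunction and Koszul argument from (1), now with $2L=-K_B$.

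For (3), fix a point $p$ of the distinguished $\PP^2$ factor. The fiber of $X_-$ over $p$ is the zero locus of the restricted bundle in $\PP^2_y\times B'$, so its class is $H_i^2$ and it has dimension one. Its dualizing sheaf is trivial by adjunction: the determinant of the bundle equals $-K_{\PP^2\times B}$, and $H_i$ restricts trivially to the fiber. Hence a smooth nonempty fiber is a disjoint union of genus-one curves. The two intersection numbers come from pairing $H_i^2$ against $D$ and $G$ using Proposition~\ref{prop:uniformtopology}(1).

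The main obstacle is the K3 statement in (1), specifically the connectedness of $S$ and the vanishing $h^1(\OO_S)=0$. For this I will run the Koszul vanishing with care, factor by factor, using $2a_i+r_i=n_i+1$.
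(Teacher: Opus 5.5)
Your proposal is correct and follows the paper's proof essentially step for step. It restricts the defining bundles fiberwise, gets smoothness of the general fiber from generic smoothness and trivial dualizing sheaves from adjunction with $2L+R=-K_B$, kills the middle Koszul terms by K\"unneth on a factor with $a_i\ge1$, and reads off the intersection numbers from Proposition~\ref{prop:uniformtopology}(1).

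Two small points are worth tightening. First, to run the Koszul complex on $B$ in (1), you need $\ell\cdot y$ at fixed $[u:v]$ to be a section of $L+R$ on all of $B$, not just on $Z_B(P(u,v))$; the paper gets this from $y$ being determined linearly by $\mathbf f$ at fixed $[u:v]$, or you can use $H^1(B,R)=0$. Second, the appeal to Proposition~\ref{prop:picard}, which needs $L$ and $R$ ample, is unnecessary for the index statement: divisibility only uses that each $H_i$ restricts to a divisor of degree $\int_BL^2RH_i$ on the generic fiber.
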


\begin{proof}
(1) Over a fixed $[u:v]$ the auxiliary vector $y$ is determined linearly by $\mathbf f$, so its components are sections of $L$ on $B$ and $\ell\cdot y$ is a section of $L+R$.
The general fiber is smooth by generic smoothness, and $\omega_S=(K_B+2L+R)|_S=\OO_S$ by adjunction and
\eqref{eq:CYcondition}.  In the Koszul complex
$0\to\OO_B(-2L-R)\to\OO_B(-L)\oplus\OO_B(-L-R)\to\OO_B\to\OO_S\to0$ the middle terms have no cohomology: choose $i$ with $a_i\ge1$; 
then $1\le a_i\le n_i$ and $1\le a_i+r_i=n_i+1-a_i\le n_i$, so the corresponding factor $H^*(\PP^{n_i},\OO(-a_i))$, resp.
$H^*(\PP^{n_i},\OO(-a_i-r_i))$, vanishes and the K\"unneth formula kills the whole group.
Since $\OO_B(-2L-R)=K_B$, we obtain $h^0(\OO_S)=h^2(\OO_S)=1$ and $h^1(\OO_S)=0$, so $S$ is a K3 surface.

(2) Over $[y]\in\PP^2$ the rank-3 bundle of \eqref{eq:generalminus} restricts to $L\oplus L\oplus R$ on $B$, since $\mathcal Q_y\simeq\CC^2$; 
so the fiber is $E_y$, which is smooth for general $y$ and has trivial dualizing sheaf by adjunction.
Surjectivity holds because $E_y$ is a complete intersection of base-point-free classes whose class $L^2R$ is nonzero, hence nonempty.
The intersection numbers are those of Proposition~\ref{prop:uniformtopology}(1).
If $R=0$ the section $\ell\cdot y$ has constant coefficients and cuts out a line in $\PP^2_y$.

(3) Restricting to $\{x\}\times B'$ kills $H_i$, and $-K_B=\sum_{j=1}^k(n_j+1)H_j=2L+R$, so the determinant of the restricted bundle is $\OO(3)\boxtimes(-K_{B'})=-K_{\PP^2\times B'}$.
The intersection numbers again follow from Proposition~\ref{prop:uniformtopology}(1).
\end{proof}

\begin{remark}
The number of connected components of $E_y$ can exceed 1.
In Example~2 ($R=2T$ on $\PP^3\times\PP^1$) the fiber $E_y$ consists of the two $(2,2)$ curves over the two points of $Z_{\PP^1}(\ell(t)\cdot y)$, 
in accordance with the Stein factorization $X_2\to S\to\PP^2_y$ of Section~\ref{sec:examples12}; the connected fibers have degree $8/2=4$ against $H$, and the index is 4 (Section~\ref{sec:examples12}).
\end{remark}

\begin{remark}
\label{rem:index}
Each nef phase in the examples below carries a genus-one fibration, and its index is computed by the argument of \cite[Prop.~2.8]{KanCompanion}: 
if $\operatorname{Pic}(X)/\mathrm{tors}$ is generated by the displayed integral basis, the index is the greatest common divisor of the intersection numbers of the basis divisors with the fiber class, and it is attained by a multisection.
Conversely, the closure of a closed point of the generic fiber is a Cartier divisor on the smooth threefold, so its degree is divisible by that common divisor.
The generation statement holds in all examples: for the non-Gorenstein case and Examples~3 and~4 by Proposition~\ref{prop:picard}, 
for Example~1 by Proposition~\ref{prop:Rzero}, and for Example~5 by the Lefschetz theorem for ample hypersurfaces in smooth toric fourfolds.
For Example~2 we give a direct argument in Section~\ref{sec:examples12}.
\end{remark}

\begin{proposition}
\label{prop:uniformperiod}
Use the basis $(H_1,\ldots,H_k,G)$ of $\operatorname{Pic}(X_-)\otimes\QQ$ and corresponding coordinates $(z_1,\ldots,z_k,w)$.
The Hori--Vafa model associated with the partition $E_1,\ldots,E_5$ of Proposition~\ref{prop:BBduality} (which makes sense without the Gorenstein condition) has fundamental period
\begin{equation}
\begin{aligned}
\Phi(z,w)&=\sum_{n\in\ZZ_{\ge0}^k}\ \sum_{m=0}^{a\cdot n}
\frac{(a\cdot n)!^4\,(r\cdot n+m)!}
{(a\cdot n-m)!^2\,m!^3\,\prod_{i=1}^k(n_i!)^{2a_i+r_i}}\ z^nw^m,\\
&a\cdot n=\sum_{i=1}^ka_in_i,\qquad r\cdot n=\sum_{i=1}^kr_in_i .
\end{aligned}
\label{eq:uniformperiod}
\end{equation}
Its restriction to $w=0$,
\[
\Phi(z,0)=\sum_{n\in\ZZ_{\ge0}^k}\frac{(a\cdot n)!^2\,(r\cdot n)!}{\prod_{i=1}^k(n_i!)^{2a_i+r_i}}\,z^n,
\]
is the hypergeometric series attached to the complete intersection $E_y$ of classes $L,L,R$ in $B$, i.e.\ to the fibers of Proposition~\ref{prop:twofibrations}(2).
\end{proposition}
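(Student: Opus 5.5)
The plan is to compute the Hori--Vafa (equivalently, Givental/toric) fundamental period of the nef partition $E_1,\dots,E_5$ by the standard residue/constant-term recipe. The GLSM has Cox coordinates $u,v,y_1,y_2,y_3,x^{(i)}_l$ with charges given by the degrees in $\ZZ\oplus\operatorname{Pic}(B)$. The five ``$P$-fields'' carry charges $-L_j$, with $L_1=\dots=L_4=(0,L)$ and $L_5=(-1,L+R)$. For a complete intersection of the classes $L_j$ in a toric variety with Mori-cone coordinates $q^\beta$, the period is
\[
\Phi=\sum_{\beta}\frac{\prod_{j}\Gamma(1+L_j\cdot\beta)}{\prod_\rho\Gamma(1+D_\rho\cdot\beta)}\,q^\beta ,
\]
summed over $\beta$ for which the result is nonzero. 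I will take this as the definition of the period of the Hori--Vafa model. Equivalently, it is the constant term of $\prod_j (1-\sum_{\rho\in E_j} X_\rho)^{-1}$ on the torus $\prod_\rho X_\rho^{\langle\cdot\rangle}$ subject to the charge relations, and this form makes sense without the Gorenstein condition.

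First I parametrize curve classes dual to the basis $(H_1,\dots,H_k,G)$. The pairing of $\beta$ with a Cox divisor of degree $(d,\sum d_iH_i)$ is computed through $G=L-F$, so $F\cdot\beta=a\cdot n-m$. Writing $n_i=H_i\cdot\beta$ and $m=G\cdot\beta$, I get $D_u\cdot\beta=D_v\cdot\beta=F\cdot\beta=a\cdot n-m$, $D_{y_j}\cdot\beta=(-F+L)\cdot\beta=m$, and $D_{x^{(i)}_l}\cdot\beta=n_i$. On the $P$-side, $L_{1..4}\cdot\beta=a\cdot n$ and $L_5\cdot\beta=-F\cdot\beta+(L+R)\cdot\beta=m+r\cdot n$. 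Substituting into the formula gives exactly
\[
\frac{(a\cdot n)!^4\,(r\cdot n+m)!}{(a\cdot n-m)!^2\,m!^3\prod_i(n_i!)^{n_i+1}},
\]
and $n_i+1=2a_i+r_i$ by \eqref{eq:factorCY}.

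Next I determine the summation range. The factors $1/\Gamma(1+k)$ vanish for negative integers $k$, so only $n_i\ge0$, $m\ge0$, and $a\cdot n-m\ge0$ contribute. This yields the range $0\le m\le a\cdot n$ in \eqref{eq:uniformperiod}. To justify this in Hori--Vafa language, I expand the constant term as a multinomial sum. Each monomial corresponds to nonnegative exponents for the Cox variables, and the linear relations among them force those exponents to equal the intersection numbers above. Hence no negative-exponent terms arise, and the truncation is automatic.

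Finally I set $w=0$, which forces $m=0$. The summand then reduces to $(a\cdot n)!^4(r\cdot n)!/\bigl((a\cdot n)!^2\prod_i(n_i!)^{2a_i+r_i}\bigr)=(a\cdot n)!^2(r\cdot n)!/\prod_i(n_i!)^{n_i+1}$. This is the standard hypergeometric period of the complete intersection of $L,L,R$ in $\prod\PP^{n_i}$, which is the fiber $E_y$ of Proposition~\ref{prop:twofibrations}(2).

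The main obstacle is keeping the identification of the Hori--Vafa constant term with the $\Gamma$-series honest in the non-Gorenstein case, where $L_5$ is only $\QQ$-Cartier. I would handle it by working directly with the charge matrix of the Cox ring, which is integral regardless, rather than with the polytope. The remaining difficulty is the sign and charge bookkeeping for $y_j$ under $G=L-F$.
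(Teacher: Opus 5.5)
Your proposal is correct and follows essentially the same route as the paper. Both arguments pair a curve class $\beta$ with the Cox degrees via $F=L-G$, which gives $a\cdot n-m$, $m$ and $n_i$ for the coordinates and $a\cdot n$, $m+r\cdot n$ for the parts. Both then obtain the standard formula $\prod_j(L_j\cdot\beta)!/\prod_\rho(D_\rho\cdot\beta)!$, summed over $D_\rho\cdot\beta\ge0$, from the multinomial/constant-term expansion of the Hori--Vafa integral. Setting $m=0$ then cancels two factors of $(a\cdot n)!$.

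The one point to tighten is the summation range. Justify it by the constant-term expansion, as you do at the end, not by the vanishing of $1/\Gamma$ alone. For classes with some $n_i<0$ the naive $\Gamma$-ratio can be of the form $0/0$, for instance $n=(-1,0)$, $m=0$ in Example~4.
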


\begin{proof}
A curve class $\beta$ with $H_i\cdot\beta=n_i$ and $G\cdot\beta=m$ pairs with the Cox coordinates as follows: since $\deg u=\deg v=(1,0)=L-G$, $\deg y_j=G$ and $\deg x^{(i)}_l=H_i$, we get $a\cdot n-m$, $m$ and $n_i$; and with the parts, $L_1,\dots,L_4\cdot\beta=a\cdot n$ and $L_5\cdot\beta=(G+R)\cdot\beta=m+r\cdot n$.
Expanding the residue integral of the Hori--Vafa model, in which each part contributes $(1-\sum_{\rho\in E_j}z_\rho)^{-1}$, by the multinomial theorem and taking the constant term along the multiplicative relations gives the standard formula
\[
\Phi=\sum_{\substack{\beta\,:\,D_\rho\cdot\beta\ge0\\ \text{for all }\rho}}\frac{\prod_{j=1}^5(L_j\cdot\beta)!}
{\prod_{\rho\in E_1\cup\cdots\cup E_5}(D_\rho\cdot\beta)!}\,q^\beta
\]
\cite{HKTY,Givental}, which is
\eqref{eq:uniformperiod}.  Setting $m=0$ cancels 2 of the 4 factors
$(a\cdot n)!$ against the denominators of $u,v$.
\end{proof}

\begin{remark}\label{rem:uniformperiod}
Formula \eqref{eq:uniformperiod} specializes to \eqref{eq:period2} for Example~2 ($a=(2,0)$, $r=(0,2)$, variables $(z_1,z_2,z_3)=(z_H,w,z_T)$), 
to \eqref{eq:period3} for Example~3 ($a=1$, $r=3$), to \eqref{eq:period4} for Example~4 ($a=r=(1,1)$), and to the period of the non-Gorenstein case in \cite{KanCompanion} ($a=2$, $r=1$).
For Example~1 ($R=0$), \eqref{eq:period1} is instead the period attached to the ordinary CICY presentation \eqref{eq:X1cicy} in $\PP^3\times\PP^1\times
\PP^1$; the two hypergeometric series differ, as they come from different
toric ambient spaces of the same threefold; they agree on $w=0$.
\end{remark}

%%%%%%%%%%%%%%%%%%%%%%%%%%%%%%%%%%%%%%%%%%%%%%%%%%%%%%%%%%%%%%%%%%%%%%%%%%%%%%%%%%%%%%%%%%%%%%
\section{Examples 1 and 2: two-quadric fibrations}
\label{sec:examples12}

\subsection{Example 1: an ordinary CICY on both sides}

For $X_{1,+}$ in \eqref{eq:X1intro}, take
\[
B=\PP^3_H\times\PP^1_T, \qquad L=2H+T, \qquad R=0.
\]
Since $R=0$, Proposition~\ref{prop:Rzero} applies: the generalized phase itself has the ordinary CICY presentation
\begin{equation}
X_{1,+}\simeq
\left[\begin{array}{c|cc}
\PP^3&2&2\\
\PP^1&1&1\\
\PP^1&1&1
\end{array}\right],
\label{eq:X1cicy}
\end{equation}
with the cubic $\PP^1$ of the generalized presentation being one of the two $\PP^1$ factors.
In other words, the negative entry of \eqref{eq:X1intro} is an artifact of the presentation.
The opposite phase has the same CICY type, and the two models are the two small resolutions of the determinantal anticanonical hypersurface $\{f_0f_3-f_1f_2=0\}\subset\PP^3\times\PP^1$.
Since $\int_B(2H+T)^4=32$, it has $32$ ordinary double points, and the two small resolutions differ by $32$ ordinary flops.
By Proposition~\ref{prop:Rzero}, $\operatorname{Pic}(X_1)$ is generated by $H,T,F$; together with $e(X_1)=-104$ this gives $(h^{1,1}(X_1),h^{2,1}(X_1))=(3,55)$.

Let $(H,T,F)$ be the natural divisor basis in the generalized phase.
The intersection data are
\[
H^3=2, \qquad H^2T=H^2F=HTF=4, \qquad T^2=F^2=0,
\]
\[
c_2(X_1)\cdot(H,T,F)=(44,24,24), \qquad e(X_1)=-104.
\]
In the nef phase put $G=2H+T-F$. 
Then, in the basis $(H,T,G)$, the cubic form $\mu_1(x,y,z):= \int_{X_1}(xH+yT+zG)^3$ is given by 
\begin{equation}
\mu_1(x,y,z)
=2x^3+12x^2y+12x^2z+24xyz,
\label{eq:cubic1}
\end{equation}
\[
c_2(X_1)\cdot(H,T,G)=(44,24,24),
\]
and
\begin{equation}
\mathcal R_1
=\QQ[H,T,G]/(T^2,G^2,2H^2-2H(T+G)+3TG).
\label{eq:ring1}
\end{equation}
The projection to $\PP^1\times\PP^1$ has general fiber a $(2,2)$ complete intersection in $\PP^3$, hence a genus-one curve.
Its index is 4: by Proposition~\ref{prop:Rzero}, $\operatorname{Pic}(X_1)$ is generated by $H,T,F$, the fiber class is $T\cdot F$, and $(aH+bT+cF)\cdot TF=4a$, with $H$ a 4-section (Remark~\ref{rem:index}).

\subsection{Example 2: the transition is an isomorphism}

For $X_{2,+}$ in \eqref{eq:X2intro},
\[
B=\PP^3_H\times\PP^1_T, \qquad L=2H, \qquad R=2T.
\]
General quadrics $f_0,\ldots,f_3$ in $\PP^3$ have no common zero.
Theorem~\ref{thm:cubicblock} therefore shows that the generalized phase is isomorphic to
\begin{equation}
X_2\simeq
Z\!\left((\mathcal Q\boxtimes\OO_{\PP^3}(2))
\oplus\OO_{\PP^2_y\times\PP^1_t}(1,2)\right)
\subset\PP^2_y\times\PP^3\times\PP^1_t.
\label{eq:X2bundle}
\end{equation}
Let
\[
S=\{\ell_1(t)y_1+\ell_2(t)y_2+\ell_3(t)y_3=0\}
\subset\PP^2_y\times\PP^1_t.
\]
For general $\ell_i$, the 3 quadrics $\ell_i(t)$ form a basis of $H^0(\PP^1,\OO(2))$.
For fixed $t$ the equation is a line in $\PP^2_y$, and the kernel of the evaluation map $\OO^{\oplus3}\to\OO(2)$ is $\OO(-1)^{\oplus2}$, so $S\simeq\PP_{\PP^1}(\OO(-1)^{\oplus2})\simeq\PP^1\times\PP^1$.
For fixed $y$ the equation is a binary quadric in $t$, so $S\to\PP^2_y$ is the double cover branched along a conic.
The map $X_2\to S$ has general fiber a $(2,2)$ curve in $\PP^3$ and is the relevant connected genus-one fibration.
The composite $S\to\PP^2_y$ is a double cover; consequently $X_2\to\PP^2_y$ has 2 connected genus-one components in a general fiber.

The genus-one fibration $X_2\to S$ has index 4.
Its fiber $E$ has $H\cdot E=4$ and $G\cdot E=T\cdot E=0$.
Since $h^{1,1}(X_2)=3$ (from
\cite{AAGGL}; see the introduction) and $H,G,T$ are
linearly independent, every $D\in\operatorname{Pic}(X_2)$ satisfies $D\equiv cH+aG+bT$ numerically for some $a,b,c\in\QQ$.
The fibration has vertical lines $\ell$, that is, lines in the fibers, with $H\cdot\ell=1$ and $G\cdot\ell=T\cdot\ell=0$.
Indeed, write $X_2\subset S\times\PP^3$ as the zero locus of a section of $\mathcal K\boxtimes\OO_{\PP^3}(2)$ with $\mathcal K=\pi^*\mathcal Q$ for the double cover $\pi\colon S\to\PP^2_y$, and let $\gamma=\pi^*c_1\OO(1)$, so that $c(\mathcal K)=1+2\gamma+3\gamma^2$ and $\int_S\gamma^2=2$.
Vertical lines are the zeros of the induced section of $\mathcal K\boxtimes\Sym^2\mathcal S^*$ on $S\times\Gr(2,4)$, where $\mathcal S$ is the tautological bundle; the rank 6 equals $\dim S\times\Gr(2,4)$.
With $\alpha,\beta$ the Chern roots of $\mathcal S^*$ and
$\int_{\Gr(2,4)}p=[\alpha^3\beta^2](\alpha-\beta)p$ for symmetric $p$, the top Chern class integrates to
\[
2\,[\alpha^3\beta^2](\alpha-\beta)\,[\gamma^2]\!\!
\prod_{r\in\{2\alpha,\alpha+\beta,2\beta\}}\!\!
(r^2+2\gamma r+3\gamma^2)=192.
\]
Hence there are $192$ vertical lines, counted with multiplicity, and in particular one exists.
Thus $c=D\cdot\ell\in\ZZ$ and $D\cdot E=4c\in4\ZZ$, while $H$ is a 4-section.

In the original basis $(H,T,F)$,
\[
\mu_{2,+}(x,y,z)=6x^3+12x^2y+12x^2z+24xyz,
\]
\[
c_2(X_2)\cdot(H,T,F)=(36,24,24), \qquad e(X_2)=-56.
\]
The basis natural from \eqref{eq:X2bundle} is $(H,G,T)$ with $G=2H-F$.
Then
\begin{equation}
\mu_2(x,y,z)=6x^3+24x^2y+12x^2z+24xy^2+24xyz,
\label{eq:cubic2}
\end{equation}
\[
c_2(X_2)\cdot(H,G,T)=(36,48,24),
\]
and
\begin{equation}
\mathcal R_2
=\QQ[H,G,T]/(T^2,G^2-2GT,(2H-G)^2).
\label{eq:ring2}
\end{equation}

The toric ambient space of \eqref{eq:X2bundle} is Gorenstein although it is not smooth.
Since only the $\PP^3$ factor has $a_i>0$, there is a single nef chamber (Lemma~\ref{lem:stabilizers}), and its nontrivial stabilizer is a $\mu_2$ along the stratum $x=0$ of the $\PP^3$ coordinates; 
the 4 transverse $x_i$ have weight 1, so the quotient singularity is of type $\frac12(1,1,1,1)$, which is Gorenstein.
The 5 defining classes are 4 copies of $2H$ and the class $G+2T$.
These and the anticanonical class are Cartier, and the Calabi--Yau zero locus avoids the quotient-singular stratum.
This is the case $a=(2,0)$, $r=(0,2)$ of Lemma~\ref{lem:stabilizers}, and Proposition~\ref{prop:BBduality} applies to this nef phase.

%%%%%%%%%%%%%%%%%%%%%%%%%%%%%%%%%%%%%%%%%%%%%%%%%%%%%%%%%%%%%%%%%%%%%%%%%%%%%%%%%%%%%%%%%%%%%%
\section{Example 3: one flop to a toric Fano hypersurface}
\label{sec:example3}

The threefold $X_{3,+}$ in \eqref{eq:X3intro} has a $[3,-1]$~block with $B=\PP^4$, $L=H$ and $R=3H$, the first row of Table~\ref{tab:flopcounts}.
The 4 coefficients of the cubic in $[u:v]$ are general linear forms on $\PP^4$.
Their common zero locus is one point, so Theorem~\ref{thm:cubicblock} gives a single ordinary flop.
Here both intermediate fourfolds of the construction are projective bundles.
The original intermediate fourfold can be written
\[
M_3=\PP_{\PP^1}
\bigl(\OO\oplus\OO(-1)^{\oplus3}\bigr), \qquad
X_{3,+}\in|-K_{M_3}|,
\]
but $-K_{M_3}=4H-F$ is not nef.
On the opposite side, the fourfold $Y_4=Z(\mathcal Q\boxtimes\OO(1))$ of the proof of Proposition~\ref{prop:picard} is a projective bundle over $\PP^2_y$: 
for fixed $y$, the condition that $\mathbf f(x)$ lie in the image of $A(y)$ consists of 2 linear equations on $\PP^4$.
Thus
\begin{equation}
Y_4=V_3=\PP_{\PP^2}
\bigl(\OO\oplus\OO(-1)^{\oplus2}\bigr), \qquad
X_{3,-}\in|-K_{V_3}|.
\label{eq:V3}
\end{equation}
If $G$ is the hyperplane class of the base, then
\[
-K_{V_3}=3H+G
\]
is ample; hence $V_3$ is a smooth toric Fano fourfold.

On the generalized side,
\[
H^3=11, \qquad H^2F=4, \qquad F^2=0, \qquad c_2(X_3)\cdot(H,F)=(62,24).
\]
On the nef side $G=H-F$, and
\begin{equation}
H^3=11, \quad H^2G=7, \quad HG^2=3, \quad G^3=0, \qquad
c_2(X_3)\cdot(H,G)=(62,36).
\label{eq:intersection3}
\end{equation}
Thus, in the basis $(H,G)$
\begin{equation}
\mu_3(x,y)=11x^3+21x^2y+9xy^2, \qquad
\mathcal R_3
=\QQ[H,G]/(9H^2-21HG+16G^2,G^3).
\label{eq:ring3}
\end{equation}
Proposition~\ref{prop:picard} shows that $\operatorname{Pic}(X_3)$ is generated by $H$ and $G$.
Together with $e(X_3)=-168$ this gives the Hodge numbers $(2,86)$.
The dimension count
\[
h^0(-K_{V_3})-1-\dim\operatorname{Aut}(V_3)=105-1-18=86
\]
is an independent check.
The plane-cubic fibration $X_{3,-}\to\PP^2$ has index 3, since the fiber class is $G^2$ and $(rH+gG)\cdot G^2=3r$ (Remark~\ref{rem:index}).

The ordinary $(4,2)$ hypersurface in $\PP^3\times\PP^1$ has the same Hodge numbers $(2,86)$, the same Euler number $-168$, and also a K3 fibration over $\PP^1$, 
but it is not deformation equivalent to $X_{3,+}$, because the integral cubic form on $H^2$ is a deformation invariant.
In the bases $(H,F)$, where $F$ is the class of the K3 fiber, the cubic forms are
\[
2x^3+12x^2y\quad\text{for the $(4,2)$ hypersurface},\qquad 11x^3+12x^2y\quad\text{for }X_{3,+},
\]
and no unimodular change of basis identifies them: modulo $4$ they reduce to $2x^3$ and $3x^3$, whose sets of values differ.

%%%%%%%%%%%%%%%%%%%%%%%%%%%%%%%%%%%%%%%%%%%%%%%%%%%%%%%%%%%%%%%%%%%%%%%%%%%%%%%%%%%%%%%%%%%%%%
\section{\texorpdfstring{Example 4: a symmetric configuration over $\PP^2\times\PP^2$}{Example 4: a symmetric configuration over P2 x P2}}
\label{sec:example4}

\subsection{The two phases}
For $X_{4,+}$ in \eqref{eq:X4intro}, take
\[
B=\PP^2_{x}\times\PP^2_{x'},\qquad L=R=H_1+H_2 ,
\]
where $H_1,H_2$ are the two hyperplane classes.
The 4 coefficients $f_i$ of the cubic in $[u:v]$ are general $(1,1)$-forms, and the generalized section is $\ell\cdot y$ with $\ell_j$ general $(1,1)$-forms.
The family is symmetric under exchanging the two factors of $B$.

General $(1,1)$-forms $f_0,\ldots,f_3$ on $\PP^2\times\PP^2$ meet in $\int(H_1+H_2)^4=6$ reduced points, so Theorem~\ref{thm:cubicblock} gives a simultaneous flop of 6 disjoint $(-1,-1)$-curves between $X_{4,+}$ and
\begin{equation}
X_{4,-}=
Z\!\left((\mathcal Q\boxtimes\OO(1,1))\oplus\OO(1,1,1)\right)
\subset\PP^2_y\times\PP^2_x\times\PP^2_{x'} .
\label{eq:X4minus}
\end{equation}
Both $L$ and $R$ are ample, so Proposition~\ref{prop:picard} gives
\[
\operatorname{Pic}(X_{4,\pm})=\ZZ H_1\oplus\ZZ H_2\oplus\ZZ G, \qquad G=H_1+H_2-F,
\]
and $h^{1,1}(X_4)=3$.
Proposition~\ref{prop:uniformtopology} gives $e(X_4)=-78$, hence
\[
(h^{1,1}(X_4),h^{2,1}(X_4))=(3,42).
\]

The construction of Proposition~\ref{prop:picard} makes the nef phase explicit.
The fourfold $Y_4=Z(\mathcal Q\boxtimes\OO(1,1))\subset\PP^2_y\times
\PP^2_x\times\PP^2_{x'}$ is isomorphic in codimension 1 to the
intermediate fourfold $M=Z(P)\subset\PP^1\times\PP^2\times\PP^2$; the 6 planes $\PP^2_y\times\{x_i\}$ of $Y_4$ correspond to the 6 lines $\PP^1\times\{x_i\}$ of $M$.
The nef phase $X_{4,-}$ is the ample divisor $Y_4\cap Z(\ell\cdot y)$ of class $\OO(1,1,1)$.

Here $a=r=(1,1)$, so by Lemma~\ref{lem:stabilizers} both nef-phase toric ambient spaces $\mathcal A_{-,1}$ and $\mathcal A_{-,2}$, corresponding to the chambers $c_1<c_2$ and $c_1>c_2$, are smooth, and Proposition~\ref{prop:BBduality} applies.
The proper transforms of \eqref{eq:X4minus} in the two chamber quotients are canonically isomorphic; in particular, the wall crossing at $c_1=c_2$ induces no birational modification of the Calabi--Yau threefold.

\subsection{Topological data}
In the basis $(H_1,H_2,G)$ of the nef phase, Proposition~\ref{prop:uniformtopology} gives the nonzero intersection numbers
\[
H_1^2H_2=H_1H_2^2=5,\quad H_1^2G=H_2^2G=3,\quad H_1H_2G=6,\quad H_1G^2=H_2G^2=3,
\]
while $H_1^3=H_2^3=G^3=0$.
Thus
\begin{equation}
\begin{aligned}
\mu_4(x,y,z)={}&15x^2y+15xy^2+9x^2z+9y^2z\\
&+36xyz+9xz^2+9yz^2,
\end{aligned}
\label{eq:cubic4}
\end{equation}
\[
c_2(X_4)\cdot(H_1,H_2,G)=(36,36,36),
\]
and the even cohomology ring is
\begin{equation}
\begin{aligned}
\mathcal R_4=\QQ[H_1,H_2,G]\big/\bigl(&(H_1+H_2)^2-3(H_1+H_2)G+4G^2,\\
&H_1^2-H_1H_2+H_2^2,\ (H_1-H_2)(3H_1+3H_2-5G)\bigr).
\end{aligned}
\label{eq:ring4}
\end{equation}
The 3 quadrics are a basis of the kernel of the pairing $\operatorname{Sym}^2H^2\to(H^2)^\vee$ defined by $\mu_4$, and they form a regular sequence, so $\mathcal R_4$ has Hilbert function $(1,3,3,1)$.
The exchange of the two factors of $B$ acts on $\mathcal R_4$ by $H_1\leftrightarrow H_2$: the first two relations are invariant, and the third changes sign.

In the generalized basis $(H_1,H_2,F)$, Proposition~\ref{prop:uniformtopology}(4) with $F\cdot C_+=1$ gives
\[
\mu_{4,+}(x,y,z)=15x^2y+15xy^2+6x^2z+24xyz+6y^2z,
\]
\[
c_2(X_{4,+})\cdot(H_1,H_2,F)=(36,36,24).
\]
The class $F$ has $F^2=0$ and $c_2\cdot F=24$, as it must for the K3 fibration $X_{4,+}\to\PP^1$, whose fiber is the complete intersection of a $(1,1)$ and a $(2,2)$ divisor in $\PP^2\times\PP^2$.
In the nef basis the jump across the flop is $G^3|_{X_{4,+}}=-6$ and $c_2(X_{4,+})\cdot G=36+12=48$.

\subsection{The genus-one fibrations}
The nef phase has 3 projections to $\PP^2$, and each is a genus-one fibration.
\begin{enumerate}
\item Over $\PP^2_y$ the fiber is the complete intersection of 3
$(1,1)$-divisors in $\PP^2\times\PP^2$, a genus-one curve of bidegree $(3,3)$ (Proposition~\ref{prop:twofibrations}(2)).
The fiber class is $G^2$, and $(aH_1+bH_2+cG)\cdot G^2=3a+3b$.
Since $\operatorname{Pic}(X_4)$ is generated by $H_1,H_2,G$, the argument of Remark~\ref{rem:index} shows that the index is exactly 3.
\item Over $\PP^2_x$ the fiber is the zero locus in $\PP^2_y\times\PP^2_{x'}$
of $(\mathcal Q\boxtimes\OO(1))\oplus\OO(1,1)$, a possibly disconnected union of genus-one curves by Proposition~\ref{prop:twofibrations}(3).
It is in fact connected: in the Koszul complex the terms $\mathcal Q^*\boxtimes\OO(-1)$, $\OO(-1,-1)$, $\OO(-2,-2)$ and $\mathcal Q^*(-1)\boxtimes\OO(-2)$ are acyclic because $\OO_{\PP^2}(-1)$ and $\OO_{\PP^2}(-2)$ are, so $h^0(\OO)=1$.
The fiber class is $H_1^2$, and by the degree formula of Proposition~\ref{prop:twofibrations}(3), $(aH_1+bH_2+cG)\cdot H_1^2=5b+3c$.
Hence the index is 1.
A divisor of degree 1 on the generic fiber is effective by the Riemann--Roch theorem on a curve of genus 1, so the generic fiber has a rational point, whose closure is a rational section; this is an elliptic fibration with a rational section.
The same holds over $\PP^2_{x'}$ by symmetry.
\end{enumerate}
Thus the same threefold carries a genus-one fibration of index 3 and 2 fibrations of index 1.
For comparison, the nef phase of the non-Gorenstein case has a single genus-one fibration, of index 4 \cite{KanCompanion}.

%%%%%%%%%%%%%%%%%%%%%%%%%%%%%%%%%%%%%%%%%%%%%%%%%%%%%%%%%%%%%%%%%%%%%%%%%%%%%%%%%%%%%%%%%%%%%%
\section{\texorpdfstring{Beyond $[3,-1]$~blocks}{Beyond [3,-1] blocks}}
\label{sec:beyond}

The auxiliary sections of Section~\ref{sec:cubicblock} solve the kernel condition of a negative column.
The same idea applies to configurations without a $[3,-1]$~block.
We first record the simplest case, in which the negative entry can be removed altogether, and then treat Example~5.

\subsection{Disguised complete intersections}

\begin{lemma}\label{lem:disguise}
Let $A$ be a product of projective spaces, $L$ a line bundle on $A$, and $[s_0:s_1]$ the coordinates of $\PP^1$.
Consider a configuration on $\PP^1\times A$ whose first two columns are
\[
\left[\begin{array}{c|cc}
\PP^1&1&-1\\ A&L&L
\end{array}\right],
\]
followed by further columns, and let $p=s_0f_0+s_1f_1$, with $f_0,f_1\in H^0(A,L)$ not both zero, be the first defining section.
Then $H^0\bigl(Z(p),(\OO(-1)\boxtimes L)|_{Z(p)}\bigr)$ is spanned by the section $w$ determined by $(f_0,f_1)=w\,(s_1,-s_0)$, and
\[
Z(p)\cap Z(w)=\PP^1\times Z_A(f_0,f_1)
\]
as schemes.
Consequently, if the sections of the remaining columns on $\PP^1\times Z_A(f_0,f_1)$ are restrictions of sections on $\PP^1\times A$, 
the configuration defines the same threefold as the ordinary configuration obtained by replacing the entries $1,-1$ of the $\PP^1$ row by $0,0$.
\end{lemma}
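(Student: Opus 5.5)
The plan follows the proof of Lemma~\ref{lem:allsections}, with the cubic syzygy \eqref{eq:cubicsyzygy} replaced by the linear Koszul sequence on $\PP^1$:
\[
0\to\OO(-1)\xrightarrow{(s_1,-s_0)^t}\OO^{\oplus2}\xrightarrow{(s_0,s_1)}\OO(1)\to0 .
\]
Tensor it with $L$ and restrict to $Z(p)$. The condition $p=s_0f_0+s_1f_1=0$ says that $(f_0,f_1)^t$ lies in the image of the first map. That map is fiberwise injective, so there is a unique section $w$ of $(\OO(-1)\boxtimes L)|_{Z(p)}$ with $(f_0,f_1)=w\,(s_1,-s_0)$. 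Explicitly, $w=f_0/s_1=-f_1/s_0$ on the two standard charts.

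\emph{Spanning.} Use the Koszul sequence of the divisor $Z(p)$ twisted by $\OO(-1)\boxtimes L$:
\[
0\to\OO(-2)\boxtimes\OO_A\to\OO(-1)\boxtimes L\to(\OO(-1)\boxtimes L)|_{Z(p)}\to0 .
\]
The middle term is acyclic by K\"unneth. Hence the connecting map
\[
H^0\bigl(Z(p),(\OO(-1)\boxtimes L)|_{Z(p)}\bigr)\longrightarrow H^1(\PP^1,\OO(-2))\otimes H^0(A,\OO_A)\simeq\CC
\]
is an isomorphism, so the space of sections is one-dimensional. It remains to see $w\ne0$: if $w$ vanished identically, then $f_0=f_1=0$, contradicting the hypothesis that they are not both zero. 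So $w$ spans.

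\emph{Scheme-theoretic zero locus.} This is a local computation on the charts $s_1\ne0$ and $s_0\ne0$. On $s_1\ne0$, with affine coordinate $t=s_0/s_1$, the equation is $p=tf_0+f_1$ and $w=f_0$. The ideal $(tf_0+f_1,\,f_0)$ equals $(f_0,f_1)$, which is the ideal of $\PP^1\times Z_A(f_0,f_1)$ on that chart. The other chart is symmetric. I expect this step to be the main point to get right, since the claim is an equality of schemes and not just of sets; once it is done chart by chart, the identity of ideals is immediate.

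\emph{Consequence.} The configuration is obtained by imposing $w=0$ on $Z(p)$ together with the remaining columns. By the spanning step, the generalized section of the $-1$ column is $c\,w$ for some scalar $c$, and $c\ne0$ for a genuine (nonzero) choice. Hence the intermediate variety $Z(p)\cap Z(w)$ is $\PP^1\times Z_A(f_0,f_1)$, which is the complete intersection of the two columns with $\PP^1$-entries $0,0$ and $A$-entries $L,L$. The remaining columns are then cut out on this intermediate variety. By hypothesis, their sections there are restrictions of sections on $\PP^1\times A$, so they are exactly the sections used in the ordinary configuration. The two configurations therefore cut out the same threefold.
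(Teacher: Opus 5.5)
Your proof is correct and follows essentially the same route as the paper: the twisted Koszul sequence of $Z(p)$ with K\"unneth gives a one-dimensional space of sections, $w=f_0/s_1=-f_1/s_0$ is the explicit generator, and the chart-by-chart computation $(p,w)=(f_0,f_1)$ identifies the zero locus as a scheme. Deriving $w$ from the linear Koszul sequence on $\PP^1$, and writing out the nonvanishing of $w$ and the ``consequently'' step, only make explicit what the paper leaves implicit; for $w\ne0$, note that $f_i|_{Z(p)}=0$ forces $f_i=0$, either because $Z(p)\to A$ is surjective or because $f_i$ would then be $p$ times a section of $\OO(-1)\boxtimes\OO_A$, which is zero.
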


\begin{proof}
Multiplication by $p$ gives the exact sequence
\[
0\to\OO(-2)\boxtimes\OO\to\OO(-1)\boxtimes L\to (\OO(-1)\boxtimes L)|_{Z(p)}\to0 .
\]
Since $H^*(\PP^1,\OO(-1))=0$, the K\"unneth formula gives $H^0\bigl((\OO(-1)\boxtimes L)|_{Z(p)}\bigr)\simeq H^1(\PP^1,\OO(-2))\otimes H^0(A,\OO)\simeq\CC$.
On $Z(p)$ the local expressions $f_0/s_1$ and $-f_1/s_0$ agree where both are defined, because $s_0f_0+s_1f_1=0$, and they glue to a nonzero section $w$ with $(f_0,f_1)=w\,(s_1,-s_0)$.
Where $s_0=1$ the ideal $(p,w)$ equals $(f_0+s_1f_1,f_1)=(f_0,f_1)$, and similarly where $s_1=1$.
\end{proof}

Thus a single negative entry paired in this way carries no new geometry.
We give two examples.

\begin{enumerate}
\item For
\[
\left[\begin{array}{c|ccc}
\PP^1&1&-1&2\\ \PP^5&1&1&4
\end{array}\right]
\]
the locus $Z_A(f_0,f_1)$ is a linear $\PP^3\subset\PP^5$, and the threefold is a general divisor of bidegree $(2,4)$ in $\PP^1\times\PP^3$, the ordinary CICY $[\PP^1|2;\ \PP^3|4]$ with $(h^{1,1},h^{2,1})=(2,86)$.
This is the threefold compared with $X_{3,+}$ in Section~\ref{sec:example3}: the two have the same Hodge numbers but are not deformation equivalent.
Thus $X_{3,+}$ and this configuration both have a single negative entry, but only the latter is an ordinary CICY in disguise.
\item For
\[
\left[\begin{array}{c|ccc}
\PP^1&1&-1&2\\ \PP^1&1&1&0\\ \PP^4&1&1&3
\end{array}\right]
\]
we have $A=\PP^1_t\times\PP^4$ and $L=\OO(1,1)$.
Write $f_i=t_0a_i(x)+t_1b_i(x)$.
The threefold $W=Z_A(f_0,f_1)$ is the complete intersection of 2 $(1,1)$-divisors in $\PP^1\times\PP^4$; 
it is the projective bundle $\PP_{\PP^1}(\OO\oplus\OO(-1)^{\oplus2})$ over $\PP^1_t$, and its image in $\PP^4$ is the quadric cone $Q=\{a_0b_1-a_1b_0=0\}$ of rank 4, 
of which $W$ is a small resolution with exceptional curve $C=\PP^1_t\times\{v\}$ over the vertex $v$.
By Lemma~\ref{lem:disguise} the threefold $X$ is the ordinary CICY
\[
\left[\begin{array}{c|ccc}
\PP^1&0&0&2\\ \PP^1&1&1&0\\ \PP^4&1&1&3
\end{array}\right],
\]
an anticanonical hypersurface in $\PP^1\times W$; a Chern class computation gives $e(X)=-144$.
The map $X\to\PP^1\times Q$ is a small resolution of a general member $\overline X$ of $|\OO(2)\boxtimes\OO_Q(3)|$, which has 2 ordinary double points on $\PP^1\times\{v\}$, and it contracts the 2 curves $\{s_i\}\times C$ contained in $X$.
Hence $\operatorname{Pic}(X)\simeq\operatorname{Cl}(\overline X)$, and by the Grothendieck--Lefschetz theorem for class groups of normal varieties
\cite{RavindraSrinivas}, $\operatorname{Cl}(\overline X)\simeq\operatorname{Cl}(\PP^1\times Q)\simeq\ZZ^3$.
Therefore $(h^{1,1}(X),h^{2,1}(X))=(3,75)$.
\end{enumerate}

In both examples the negative entry disappears after the elimination.
When negative columns act on two different $\PP^1$ factors, as in Example~5, the eliminations interact and the result need not be a complete intersection.

\subsection{Example 5: two negative columns and a toric Fano hypersurface}
\label{sec:example5}

The threefold $X_{5,+}$ in \eqref{eq:X5intro} has two negative columns, one on each $\PP^1$ factor, and has no $[3,-1]$~block.
Each negative column is paired with the degree $(1,1,1)$ column as in Lemma~\ref{lem:disguise}, but the two eliminations interact: 
the first imposes $M(x)t=0$ and the second $s^tM(x)=0$ in the notation below, and together they force $M(x)$ to have rank 1.
The resulting intermediate fourfold is a determinantal variety, not a complete intersection in a product of projective spaces, but it is a smooth toric Fano fourfold, and $X_{5,+}$ is an anticanonical hypersurface in it.
No flop is needed.
Let $[s_0:s_1],[t_0:t_1]\in\PP^1$ be the two projective coordinates.
After a linear change of the first 4 coordinates of $\PP^5$, the degree $(1,1,1)$ equation can be written
\[
p_2=s^tM(x)t, \qquad
M(x)=\begin{pmatrix}x_0&x_1\\x_2&x_3\end{pmatrix}.
\]
We now identify the two negative-degree generalized sections.
On $Z(p_2)$ the vector $M(x)t$ of forms of degree $(0,1,1)$ satisfies $s^t(Mt)=0$, and the kernel of $(s_0,s_1):\OO^{\oplus2}\to\OO(1,0,0)$ is $\OO(-1,0,0)$ spanned by $(s_1,-s_0)^t$.
Hence there is a section $w$ of $\OO(-1,1,1)|_{Z(p_2)}$ with
\[
M(x)t=w\,(s_1,-s_0)^t ,
\]
and $H^0(Z(p_2),\OO(-1,1,1))=\CC w$: from the Koszul sequence of $p_2$, this space is $H^1(\PP^1\times\PP^1\times\PP^5,\OO(-2,0,0))\simeq\CC$, and the $H^0$ term $H^0(\OO(-1,1,1))$ vanishes.
Thus the third column of
\eqref{eq:X5intro} imposes $Mt=0$, i.e. both rows of $M$ are proportional
to $(-t_1,t_0)$:
\[
M(x)=\tilde w\,(-t_1,t_0), \qquad
\tilde w=(\tilde w_0,\tilde w_1)^t\in H^0\bigl(\OO(0,-1,1)^{\oplus2}\bigr).
\]
The locus $\{Mt=0\}\subset\PP^1_t\times\PP^5$ is the projective bundle $\PP_{\PP^1_t}(\OO^{\oplus2}\oplus\OO(-1)^{\oplus2})$ with fiber coordinates $(x_4,x_5,\tilde w_0,\tilde w_1)$, 
and on $\PP^1_s\times\{Mt=0\}$ the sections of $\OO(1,-1,1)$ form the 4-dimensional space spanned by $s_i\tilde w_j$; the fourth column therefore is $s^t\mathsf A\tilde w=0$ for a $2\times2$ matrix $\mathsf A$.
For general $\mathsf A$ (invertible; the rank-1 matrices form a proper closed subset) the row operation $M\mapsto\mathsf A^tM$, which is a linear change of $x_0,\ldots,x_3$, normalizes the equation to $s\cdot\tilde w=0$, which is the statement $s^tM=0$.
Consequently the vanishing of the two negative-degree generalized sections is, for general data, equivalent to
\[
M(x)t=0, \qquad
s^tM(x)=0.
\]
The same conclusion holds if the columns of \eqref{eq:X5intro} are taken in a different order, as in \cite{AAGGL}: 
the first column $p_1$ does not contribute additional sections to either negative line bundle, 
since the relevant groups $H^1(\OO(-2,0,-2))$ on the ambient space and $H^1(\OO(0,-2,-2))$ on $\PP^1_s\times\{Mt=0\}$ vanish by K\"unneth and the projective bundle formula.
Thus
\[
M(x)=\lambda
\binom{-s_1}{s_0}(-t_1,t_0).
\]
Together with the remaining 2 projective coordinates of $\PP^5$, this identifies the intermediate fourfold with
\begin{equation}
V_5=
\PP_{\PP^1\times\PP^1}
\bigl(\OO^{\oplus2}\oplus\OO(-1,-1)\bigr),
\label{eq:V5}
\end{equation}
where we use the convention of one-dimensional subspaces.
If $A_1,A_2$ are the ruling classes and $H=c_1\OO_{V_5}(1)$, then
\[
-K_{V_5}=3H+A_1+A_2,
\]
which is ample.
The ambient space $V_5$ is a smooth toric Fano fourfold, with Cox coordinates $s$, $t$ of classes $A_1$, $A_2$, $x_4,x_5$ of class $H$, and $\lambda$ of class $H-A_1-A_2$.
The remaining degree $(1,1,3)$ equation restricts to a general anticanonical section.
Indeed, the sections of $-K_{V_5}$ are the monomials $\lambda^cx_4^ax_5^b$ with $a+b+c=3$, multiplied by forms of bidegree $(1+c,1+c)$ in $(s,t)$.
Restricting a $(1,1,3)$-form from $\PP^1\times\PP^1\times\PP^5$ and substituting the entries of $M(x)$ produces all of them, since every form of bidegree $(1+c,1+c)$ is a sum of products of a $(1,1)$-form and a $(c,c)$-form.
Hence
\begin{equation}
X_{5,+}\simeq X_5\in|-K_{V_5}|.
\label{eq:X5anti}
\end{equation}

Its Chow ring is
\[
A^*(V_5)=\ZZ[H,A_1,A_2]/(A_1^2,A_2^2,H^2(H-A_1-A_2)), \qquad
\int_{V_5}H^2A_1A_2=1.
\]
Intersecting with $[X_5]=3H+A_1+A_2$ gives, in the basis $(H,A_1,A_2)$,
\begin{equation}
\mu_5(x,y,z)=8x^3+12x^2y+12x^2z+18xyz,
\label{eq:cubic5}
\end{equation}
\[
c_2(X_5)\cdot(H,A_1,A_2)=(56,24,24), \qquad e(X_5)=-156,
\]
and
\begin{equation}
\mathcal R_5
=\QQ[H,A_1,A_2]/(A_1^2,A_2^2,9H^2-12H(A_1+A_2)+8A_1A_2).
\label{eq:ring5}
\end{equation}
Since $X_5$ is an ample divisor in the smooth toric fourfold $V_5$, the Lefschetz theorem gives $\operatorname{Pic}(X_5)=\operatorname{Pic}(V_5)=\ZZ H\oplus\ZZ A_1\oplus\ZZ A_2$; 
together with $e(X_5)=-156$ this gives $h^{1,1}(X_5)=3$ and $h^{2,1}(X_5)=81$.
The dimension count $h^0(-K_{V_5})-1-\dim\operatorname{Aut}(V_5)=100-1-18=81$ is an independent check.
The fibration $X_5\to\PP^1\times\PP^1$ has a plane cubic as general fiber; its index is 3, since the fiber class is $A_1A_2$ and $(rH+a_1A_1+a_2A_2)\cdot A_1A_2=3r$ (Remark~\ref{rem:index}).

%%%%%%%%%%%%%%%%%%%%%%%%%%%%%%%%%%%%%%%%%%%%%%%%%%%%%%%%%%%%%%%%%%%%%%%%%%%%%%%%%%%%%%%%%%%%%%
\section{Toric mirrors, periods, and indicial algebras}
\label{sec:mirrors}

We now construct mirrors of the nef phases.
Examples 1, 2 and 4 are toric complete intersections, so we use Batyrev--Borisov/Hori--Vafa models.
Examples 3 and 5 are anticanonical hypersurfaces in smooth toric Fano fourfolds, so Batyrev's reflexive-polytope construction applies directly.

\subsection{Example 1}

A convenient torus presentation of the Batyrev--Borisov mirror is
\begin{equation}
a+b+d+e=1, \qquad
c+\frac{z_1}{abc}+\frac{z_2}{d}+\frac{z_3}{e}=1.
\label{eq:mirror1}
\end{equation}
Its holomorphic period at the maximally unipotent boundary point is
\begin{equation}
\Phi_1=
\sum_{n,m,l\ge0}
\frac{(2n+m+l)!^2}
{(n!)^4(m!)^2(l!)^2}
 z_1^nz_2^mz_3^l.
\label{eq:period1}
\end{equation}
The mirror has Hodge numbers
\[
(h^{1,1}(Y_1),h^{2,1}(Y_1))=(55,3), \qquad e(Y_1)=104.
\]
This mirror is the Batyrev--Borisov mirror of the ordinary CICY presentation
\eqref{eq:X1cicy}, and \eqref{eq:period1} is its period; it differs from the
uniform period \eqref{eq:uniformperiod}, which comes from a different toric ambient space of the same threefold (Remark~\ref{rem:uniformperiod}).
Throughout Section~\ref{sec:mirrors} we use
\eqref{eq:period1} for Example~1, with $(z_1,z_2,z_3)$ dual to $(H,T,G)$.

\subsection{Example 2}

Put
\[
\mathcal K(a,b,c,d)
=\frac{(1+a)^2(1+b)^2(1+c)^2(1+d)^2}{a^2b^2cd}.
\]
The torus mirror can be written
\begin{equation}
1-z_1\mathcal K=0, \qquad
1-z_2ab-t-\frac{z_3}{t}=0.
\label{eq:mirror2}
\end{equation}
Eliminating $t$ gives the double-cover form
\[
\xi^2=(1-z_2ab)^2-4z_3
\]
over the first equation.
Constant-term expansion yields
\begin{equation}
\Phi_2=
\sum_{n,l\ge0}\sum_{m=0}^{2n}
\frac{(2n)!^4(m+2l)!}
{(n!)^4(m!)^3(l!)^2(2n-m)!^2}
 z_1^nz_2^mz_3^l.
\label{eq:period2}
\end{equation}
The mirror Hodge numbers are obtained from the Batyrev--Borisov duality for nef partitions: by Proposition~\ref{prop:BBduality} and Corollary~\ref{cor:BBexamples}, 
a crepant resolution $Y_2$ of the general complete intersection of the dual nef partition satisfies
\[
(h^{1,1}(Y_2),h^{2,1}(Y_2))=(31,3), \qquad e(Y_2)=56.
\]
Consistently with $h^{2,1}(Y_2)=3$, equation~\eqref{eq:mirror2} gives a 3-parameter mirror family, and the differential system of Section~\ref{sec:frobenius} has rank 8, the dimension of $H^{\mathrm{even}}(X_2,\QQ)$.

\subsection{Example 3}

The toric hypersurface mirror is the Laurent hypersurface
\begin{equation}
1-a-b-c-d-\frac{z_1}{cd}-\frac{z_2cd}{ab}=0,
\label{eq:mirror3}
\end{equation}
with fundamental period
\begin{equation}
\Phi_3=
\sum_{n\ge m\ge0}
\frac{(3n+m)!}
{n!(m!)^3(n-m)!^2}
 z_1^nz_2^m.
\label{eq:period3}
\end{equation}
The Newton polytope $P_3$ and its polar have
\[
l(P_3)=7, \qquad l(P_3^*)=105,
\]
with total facet-interior counts $0$ and $14$ and vanishing codimension-2 correction.
Batyrev's formulas give
\[
(h^{1,1}(Y_3),h^{2,1}(Y_3))=(86,2), \qquad e(Y_3)=168.
\]

\subsection{Example 4}\label{sec:mirror4}
Use the coordinates $(z_1,z_2,w)$ dual to $(H_1,H_2,G)$.
The Hori--Vafa equations for the parts of Proposition~\ref{prop:BBduality} reduce, as in
\cite[Lemma~3.1]{KanCompanion}, to the Laurent hypersurface
\begin{equation}
1-w\,ab-\Bigl(\frac{z_1}{cd}+z_2\Bigr)\frac{(1+a)(1+b)(1+c)(1+d)}{ab}=0
\qquad\text{in }(\CC^*)^4 .
\label{eq:mirror4}
\end{equation}
Taking the constant term of the inverse of the left-hand side gives
\begin{equation}
\Phi_4=
\sum_{n_1,n_2\ge0}\ \sum_{m=0}^{n_1+n_2}
\frac{(n_1+n_2)!^4\,(n_1+n_2+m)!}
{(n_1+n_2-m)!^2\,(m!)^3\,(n_1!)^3(n_2!)^3}
 z_1^{n_1}z_2^{n_2}w^m,
\label{eq:period4}
\end{equation}
which is \eqref{eq:uniformperiod} for $a=r=(1,1)$.
On $w=0$ it reduces to $\sum_{n_1,n_2\ge0}\binom{n_1+n_2}{n_1}^3z_1^{n_1}z_2^{n_2}$, the period of the genus-one fibers of classes $(1,1)^3$ in $\PP^2\times\PP^2$.
By Proposition~\ref{prop:BBduality},
\[
(h^{1,1}(Y_4),h^{2,1}(Y_4))=(42,3),\qquad e(Y_4)=78 .
\]

On the boundary $z_1=z_2=0$ the $\mathcal R_4$-valued Frobenius series (Section~\ref{sec:frobdata}), with its exponential factor removed, is
\[
1+F^2\,\Li_2(w)-12[\mathrm{pt}]\,\Li_3(w),\qquad F=H_1+H_2-G,
\]
where $\Li_k(q)=\sum_{m\ge1} q^m/m^k$ denotes the polylogarithm. 
Here $F^2$ is Poincar\'e dual to the sum of the 6 flopping curves.
This is the exact analogue of the boundary series of \cite{KanCompanion}, where the corresponding classes are $16$ times a flopping curve and $-32[\mathrm{pt}]$.

The period is annihilated by the 4 operators of Appendix~\ref{app:operators4}: 3 of order 2, with indicial parts
\[
(\theta_1+\theta_2)^2-3(\theta_1+\theta_2)\theta_w+4\theta_w^2,\qquad
\theta_1^2-\theta_1\theta_2+\theta_2^2,\qquad
(\theta_1-\theta_2)(3\theta_1+3\theta_2-5\theta_w),
\]
and an operator of order 3 coming from the $w$-recurrence.
These indicial parts are the 3 quadratic relations of \eqref{eq:ring4}.
The principal symbols of the 3 second-order operators form a regular sequence at a general point, so the quotient of the symbol ring has dimension $2\cdot2\cdot2=8$ and the differential system has rank at most 8; 
the argument of Section~\ref{sec:frobenius} then shows that the rank is exactly 8 and that the indicial algebra is isomorphic to $\mathcal R_4$.

\subsection{Example 5}

The toric hypersurface mirror is the Laurent hypersurface
\begin{equation}
1-a-b-c-d-\frac{z_2d}{a}-\frac{z_3d}{b}-\frac{z_1}{cd}=0,
\label{eq:mirror5}
\end{equation}
with fundamental period
\begin{equation}
\Phi_5=
\sum_{\substack{n,m,l\ge0\\n\ge m+l}}
\frac{(3n+m+l)!}
{(n!)^2(m!)^2(l!)^2(n-m-l)!}
 z_1^nz_2^mz_3^l.
\label{eq:period5}
\end{equation}
The Newton polytope $P_5$ and its polar have
\[
l(P_5)=8, \qquad l(P_5^*)=100,
\]
again with facet-interior totals $0,14$ and vanishing codimension-2 correction, hence
\[
(h^{1,1}(Y_5),h^{2,1}(Y_5))=(81,3), \qquad e(Y_5)=156.
\]

\subsection{Picard--Fuchs systems and maximally unipotent boundary points}
\label{sec:frobenius}

\paragraph{Frobenius series.}\label{sec:frobdata}
Let $\mathcal R$ be the relevant ring \eqref{eq:ind1}--\eqref{eq:ind3}, \eqref{eq:ring4} or \eqref{eq:ind5}, with divisor classes $H_i$ corresponding to the variables $z_i$ in the order used in \eqref{eq:period1}--\eqref{eq:period5}, 
and let $c(n)$ denote the coefficient of $z^n$ in the fundamental period, written as a ratio of factorials.
The ring-valued Frobenius series is
\[
\mathcal I(z)=z^{H}\sum_{n\in\ZZ_{\ge0}^{r}}\frac{c(n+H)}{c(H)}\,z^n\in\mathcal R\otimes\CC[[z]][\log z],
\]
where $r$ is the number of variables, factorials are replaced by Gamma functions, and terms outside the original summation range (for instance $m>2n$ in \eqref{eq:period2} or $n<m+l$ in \eqref{eq:period5}) are retained, 
since the reciprocal Gamma functions then contribute nilpotent, not zero, coefficients.

Let $\theta_i=z_i\partial_{z_i}$.
Each fundamental period
\eqref{eq:period1}--\eqref{eq:period5} is annihilated by explicit Picard--Fuchs operators, listed in Appendix~\ref{app:operators}; 
each annihilation is equivalent to an identity of rational functions obtained from the factorial ratios of the coefficients, so no finite-order fitting is involved.
The generic ranks of the resulting Picard--Fuchs systems are $8,8,6,8,8$. 
They equal the dimensions of the indicial algebras below, with Hilbert functions $(1,3,3,1)$ and $(1,2,2,1)$, by the argument of the next paragraph (for Example~4 see also Section~\ref{sec:mirror4}).
These numbers are $b_3$ of the mirrors, $2h^{2,1}(Y)+2$.

\paragraph{Indicial algebras.}
At the origin the indicial algebras admit the following presentations:
\begin{align}
\mathcal R_1&=
\QQ[H,T,G]/(T^2,G^2,2H^2-2H(T+G)+3TG),\label{eq:ind1}\\
\mathcal R_2&=
\QQ[H,G,T]/(T^2,G^2-2GT,(2H-G)^2),\label{eq:ind2}\\
\mathcal R_3&=
\QQ[H,G]/(9H^2-21HG+16G^2,G^3),\label{eq:ind3}\\
\mathcal R_5&=
\QQ[H,A_1,A_2]/(A_1^2,A_2^2,9H^2-12H(A_1+A_2)+8A_1A_2),\label{eq:ind5}
\end{align}
and $\mathcal R_4$ of \eqref{eq:ring4}.
These rings are isomorphic to the classical rational even cohomology rings computed independently in Sections~\ref{sec:examples12}--\ref{sec:beyond}.
The argument is the same in all five cases, and follows \cite[Corollary~4.6]{KanCompanion}.
The indicial ideal of the full left ideal contains the lowest-order parts of the listed operators, and these generate the ideal of relations of $\mathcal R_i$.
Conversely, the $\mathcal R_i$-valued Frobenius series, obtained by replacing factorials with Gamma functions and shifting the summation indices by the nilpotent divisor classes (Section~\ref{sec:frobdata}), 
is annihilated by every operator of the left ideal; its lowest-order term then shows that every indicial polynomial vanishes on $\mathcal R_i$.
Hence the indicial algebra is isomorphic to $\mathcal R_i$.
Its Hilbert function is $(1,3,3,1)$ except for Example~3, where it is $(1,2,2,1)$.
Thus its dimension is $8$ in Examples~1, 2, 4 and 5 and $6$ in Example~3, equal to the generic rank of the corresponding differential system.
Consequently, the components of the $\mathcal R_i$-valued Frobenius series form a basis of the local solution space.

\paragraph{Maximal unipotency.}
Let $N$ be a positive combination of the monodromy logarithms at the origin.
Under the identification of the indicial algebra with $\mathcal R_i$, the powers of $N$ correspond to multiplication by powers of an ample class
$D\in\mathcal R_i^2$.
By hard Lefschetz,
\[
\bigl(\operatorname{rank}N,\operatorname{rank}N^2, \operatorname{rank}N^3,\operatorname{rank}N^4\bigr)
=
\begin{cases}
(5,2,1,0),& \text{in Examples 1, 2, 4 and 5},\\
(4,2,1,0),& \text{in Example 3}.
\end{cases}
\]
In particular, $N^3\ne0$ and $N^4=0$, so $N$ has a Jordan block of size 4 and the origin is a maximally unipotent
boundary point.

%%%%%%%%%%%%%%%%%%%%%%%%%%%%%%%%%%%%%%%%%%%%%%%%%%%%%%%%%%%%%%%%%%%%%%%%%%%%%%%%%%%%%%%%%%%%%%
\section{Concluding remarks}
\label{sec:conclusion}

Every gCICY with a $[3,-1]$~block therefore admits a uniform birational interpretation.
The negative column is replaced by an auxiliary VGIT problem, and the opposite chamber produces an ordinary nef phase on which toric mirror symmetry can be applied.
Over products of projective spaces this mechanism is complete: among the 18 configurations of Proposition~\ref{prop:classification}, the example of
\cite{KanCompanion} is the unique case whose nef-phase ambient space is
non-Gorenstein.
In all 17 remaining cases, the generalized phase is isomorphic or flop-equivalent to a phase governed by the usual Batyrev--Borisov construction.
In the exceptional case the VGIT reduction still exists, but the standard Gorenstein toric mirror construction breaks down; 
the Laurent-model and compactification methods of \cite{KanCompanion} are therefore complementary to the Gorenstein theory developed here.

For gCICYs with a $[4,-2]$~block, by contrast, the syzygy module and the base scheme of the generalized anticanonical system become essential, and the opposite birational phase need not arise from the uniform construction above.
In \cite{KanIII} this leads to a Grassmannian phase and a toric-join phase.

%%%%%%%%%%%%%%%%%%%%%%%%%%%%%%%%%%%%%%%%%%%%%%%%%%%%%%%%%%%%%%%%%%%%%%%%%%%%%%%%%%%%%%%%%%%%%%
\appendix
\section{Low-order Picard--Fuchs operators}
\label{app:operators}

We use $\theta_i=z_i\partial_{z_i}$.
The operators below annihilate the fundamental periods displayed in Section~\ref{sec:mirrors}.
They are obtained from the factorial coefficient recurrences, which hold in all degrees, not from fitting a finite jet.
After division by the coefficient, each annihilation is an identity of rational functions of the summation indices, verified by clearing denominators as in Appendix~\ref{app:operators4}; the finitely many boundary rows are checked directly.

\subsection{Example 1}
Let $D=2\theta_1+\theta_2+\theta_3$ and
\[
\begin{aligned}
P_2&=3\theta_2\theta_3-2\theta_1\theta_2+3\theta_3^2
+6\theta_1\theta_3-6\theta_1^2,\\
P_3&=3\theta_2\theta_3-2\theta_1\theta_3+3\theta_2^2
+6\theta_1\theta_2-6\theta_1^2.
\end{aligned}
\]
Then
\begin{align*}
\mathcal L^{(1)}_2&=\theta_2^2-z_2(D+1)^2,\\
\mathcal L^{(1)}_3&=\theta_3^2-z_3(D+1)^2,\\
\mathcal L^{(1)}_0&=2\theta_1^2-2\theta_1(\theta_2+\theta_3)+3\theta_2\theta_3
-8z_1(D+1)^2\\
&\quad-z_2\{P_2+3\theta_3-2\theta_1\}
-z_3\{P_3+3\theta_2-2\theta_1\}.
\end{align*}

\subsection{Example 2}
Let $K=2\theta_1-\theta_2$ and $D=\theta_2+2\theta_3$.
Then
\begin{align*}
\mathcal L^{(2)}_3&=\theta_3^2-z_3(D+1)(D+2),\\
\mathcal L^{(2)}_2&=\theta_2^2-2\theta_2\theta_3+4\theta_3^2
-8z_3(\theta_3+1)(D+1)-z_2K^2,\\
\mathcal L^{(2)}_1&=K^2
-16z_1(4\theta_1^2+4\theta_1\theta_2+4\theta_1
+3\theta_2^2+2\theta_2+1)\\
&\quad-16z_1z_2(8\theta_1-3\theta_2+1)(D+1).
\end{align*}

\subsection{Example 3}
\label{app:operators3}
Let $D=3\theta_1+\theta_2$ and $K=\theta_1-\theta_2$.
Then
\begin{align*}
\mathcal L^{(3)}_2&=9\theta_1^2-21\theta_1\theta_2+16\theta_2^2
-27z_1(D+1)(D+2)-16z_2K^2,\\
\mathcal L^{(3)}_3&=\theta_2^3-z_2(D+1)K^2.
\end{align*}

\subsection{Example 4}
\label{app:operators4}
Use $(z_1,z_2,w)$ with Euler operators $\theta_1,\theta_2,\theta_w$, and put $S=\theta_1+\theta_2$.
Then
\begin{align*}
\mathcal L^{(4)}_A={}&S^2-3S\theta_w+4\theta_w^2-4w(S-\theta_w)^2\\
&-z_1(\theta_1^2+5\theta_1\theta_2+10\theta_2^2+2\theta_1+5\theta_2+1)
 -z_2(10\theta_1^2+5\theta_1\theta_2+\theta_2^2+5\theta_1+2\theta_2+1),\\
\mathcal L^{(4)}_B={}&\theta_1^2-\theta_1\theta_2+\theta_2^2
-(z_1+z_2)\bigl((S+1)^2+3(S+1)\theta_w+5\theta_w^2\bigr)\\
&+(z_1+z_2)w\bigl(5\theta_w^2+(3-2S)\theta_w-(7S+2)(S+1)\bigr),\\
\mathcal L^{(4)}_C={}&(\theta_1-\theta_2)(3S-5\theta_w)
-\bigl(z_1(\theta_1+2\theta_2+1)-z_2(2\theta_1+\theta_2+1)\bigr)(3S+3+4\theta_w)\\
&+4w\bigl(z_1(\theta_1+2\theta_2+1)-z_2(2\theta_1+\theta_2+1)\bigr)(S+\theta_w+1),\\
\mathcal L^{(4)}_w={}&\theta_w^3-w(S+\theta_w+1)(S-\theta_w)^2 .
\end{align*}
The last operator is the $w$-recurrence of \eqref{eq:period4}.
The first 3 were found by solving the linear equations for second-order operators with coefficients of degree at most 2; we now prove that they annihilate $\Phi_4$ and the $\mathcal R_4$-valued Frobenius series.

Write $c(n_1,n_2,m)$ for the coefficients of \eqref{eq:period4} and $N=n_1+n_2$.
The Gamma function gives the meromorphic identities
\[
\frac{c(n_1,n_2,m-1)}{c(n_1,n_2,m)}=\frac{m^3}{(N+m)(N-m+1)^2},\qquad
\frac{c(n_1-1,n_2,m)}{c(n_1,n_2,m)}=\frac{n_1^3(N-m)^2}{N^4(N+m)},
\]
and similarly for $n_2$; the shifts in \eqref{eq:period4} used by the operators are products of these.
After division by $c(n_1,n_2,m)$, the coefficient of $z_1^{n_1}z_2^{n_2}w^m$ in $\mathcal L\Phi_4$ becomes a rational function of $(n_1,n_2,m)$.
For each of the 4 operators this rational function vanishes identically: after clearing denominators it is a polynomial of degree at most 16 in each variable, and it vanishes on an $18\times18\times18$ grid of generic rational points.
This proves the annihilation for all coefficients at which all the Gamma quotients involved are holomorphic, which excludes only the rows $n_1+n_2\le1$.
There the coefficients are finite in number, since $c(n_1,n_2,m)=0$ for $m>n_1+n_2$, and they are checked directly.

The same identities, with $(n_1,n_2,m)$ replaced by $(n_1+H_1,n_2+H_2,m+G)$, give the annihilation of the $\mathcal R_4$-valued series, again except in the rows $n_1+n_2\le1$.
In the row $n_1=n_2=0$ the series is the boundary series $\sum_{m\ge0}r_m w^m=1+F^2\Li_2(w)-12[\mathrm{pt}]\Li_3(w)$ of Section~\ref{sec:mirror4}, with $F=H_1+H_2-G$.
Only the terms of $\mathcal L^{(4)}_A$ without $z_1,z_2$ contribute to this row, and the check is the computation of \cite[Appendix~D.1]{KanCompanion}, using $H_iF^2=0$ and $GF^2=6[\mathrm{pt}]$.
The indicial parts of $\mathcal L^{(4)}_B$ and $\mathcal L^{(4)}_C$ annihilate $F^2$ and $[\mathrm{pt}]$ by the relations of $\mathcal R_4$, and the $w$-recurrence $\mathcal L^{(4)}_w$ is treated like the operator $\mathcal L_3$ there.
The 2 rows with $n_1+n_2=1$ and $m=0$ are checked directly.

\subsection{Example 5}
Let $D=3\theta_1+\theta_2+\theta_3$ and $K=\theta_1-\theta_2-\theta_3$.
Then
\begin{align*}
\mathcal L^{(5)}_2&=\theta_2^2-z_2(D+1)K,\\
\mathcal L^{(5)}_3&=\theta_3^2-z_3(D+1)K,\\
\mathcal L^{(5)}_0&=9\theta_1^2-12\theta_1(\theta_2+\theta_3)+8\theta_2\theta_3
-27z_1(D+1)(D+2)\\
&\quad+4z_2(3\theta_1-2\theta_3)K
+4z_3(3\theta_1-2\theta_2)K.
\end{align*}

%-----------------------------------------------------------------------------

%%%%%%%%%%%%%%%%%%%%%%%%%%%%%%%%%%%%%%%%%%%%%%%%%%%%%%%%%%%%%%%%%%%%%%%%%%%%%%%%%%%%%%%%%%%%%%%%%%%%%%%%%

\par\noindent{\scshape \small
Department of Mathematics \\
Faculty of Science and Engineering\\
Waseda University \\
3-4-1 Okubo, Shinjuku, Tokyo, 169-8555, Japan}
\par\noindent{\ttfamily a\_kanazawa@waseda.jp}


\begin{thebibliography}{99}

\bibitem{AAGGL}
L.~B.~Anderson, F.~Apruzzi, X.~Gao, J.~Gray, S.-J.~Lee, \emph{A new construction of Calabi--Yau manifolds: generalized CICYs}, Nucl. Phys. B \textbf{906} (2016), 441--496, arXiv:1507.03235.

\bibitem{GG}
A.~Garbagnati, B.~van Geemen, \emph{A remark on generalized complete intersections}, Nucl. Phys. B \textbf{925} (2017), 135--143, arXiv:1708.00517.

\bibitem{BH}
P.~Berglund, T.~H\"ubsch, \emph{On Calabi--Yau generalized complete intersections from Hirzebruch
varieties and novel K3-fibrations}, Adv. Theor. Math. Phys. \textbf{22} (2018), 261--303, arXiv:1606.07420.

\bibitem{BHmirror}
P.~Berglund, T.~H\"ubsch, \emph{A generalized construction of Calabi--Yau models and mirror symmetry}, SciPost Phys. \textbf{4} (2018), 009, arXiv:1611.10300.

\bibitem{BHTyurin}
P.~Berglund, T.~H\"ubsch, \emph{Hirzebruch surfaces, Tyurin degenerations and toric mirrors: bridging
generalized Calabi--Yau constructions}, Adv. Theor. Math. Phys. \textbf{26} (2022), 2541--2598, arXiv:2205.12827.

\bibitem{JiaLin}
Q.~Jia, H.~Lin, \emph{Calabi--Yau generalized complete intersections and aspects of
cohomology of sheaves}, J. Math. Phys. \textbf{61} (2020), 052301, arXiv:1809.04714.

\bibitem{LPS}
M.~Larfors, D.~Passaro, R.~Schneider, \emph{Heterotic line bundle models on generalized complete intersection
Calabi--Yau manifolds}, JHEP \textbf{05} (2021), 105, arXiv:2010.09763.

\bibitem{CuiGaoWang}
W.~Cui, X.~Gao, J.~Wang, \emph{Machine learning on generalized complete intersection Calabi--Yau
manifolds}, Phys. Rev. D \textbf{107} (2023), 086004, arXiv:2209.10157.

\bibitem{BB}
V.~V.~Batyrev, L.~A.~Borisov, \emph{On Calabi--Yau complete intersections in toric varieties}, in: Higher-dimensional complex varieties (Trento, 1994), de Gruyter, Berlin, 1996, 39--66, arXiv:alg-geom/9412017.

\bibitem{BBH}
V.~V.~Batyrev, L.~A.~Borisov, \emph{Mirror duality and string-theoretic Hodge numbers}, Invent. Math. \textbf{126} (1996), 183--203, arXiv:alg-geom/9509009.

\bibitem{Batyrev}
V.~V.~Batyrev, \emph{Dual polyhedra and mirror symmetry for Calabi--Yau hypersurfaces in
toric varieties}, J. Algebraic Geom. \textbf{3} (1994), 493--535, arXiv:alg-geom/9310003.

\bibitem{HV}
K.~Hori, C.~Vafa, \emph{Mirror symmetry}, arXiv:hep-th/0002222.

\bibitem{BatyrevBetti}
V.~V.~Batyrev, \emph{Birational Calabi--Yau $n$-folds have equal Betti numbers}, in: New trends in algebraic geometry (Warwick, 1996), London Math. Soc.
Lecture Note Ser. \textbf{264}, Cambridge Univ. Press, 1999, 1--11, arXiv:alg-geom/9710020.

\bibitem{DenefLoeser}
J.~Denef, F.~Loeser, \emph{Germs of arcs on singular algebraic varieties and motivic integration}, Invent. Math. \textbf{135} (1999), 201--232, arXiv:math/9803039.

\bibitem{KanCompanion}
A.~Kanazawa, \emph{Mirror symmetry for gCICY threefolds (I): a $[3,-1]$~block and a non-Gorenstein toric phase}, preprint, 2026.

\bibitem{KanIII}
A.~Kanazawa, \emph{gCICY threefolds with a $[4,-2]$~block: Grassmannian and toric-join phases}, preprint, 2026.

\bibitem{RavindraSrinivas}
G.~V.~Ravindra, V.~Srinivas, \emph{The Grothendieck--Lefschetz theorem for normal projective varieties}, J. Algebraic Geom. \textbf{15} (2006), 563--590.

\bibitem{KM}
J.~Koll\'ar, S.~Mori, \emph{Birational Geometry of Algebraic Varieties}, Cambridge Tracts in Math. \textbf{134}, Cambridge Univ. Press, 1998.

\bibitem{LLW}
Y.-P.~Lee, H.-W.~Lin, C.-L.~Wang, \emph{Flops, motives and invariance of quantum rings}, Ann. of Math. (2) \textbf{172} (2010), 243--290, arXiv:math/0608370.


\bibitem{Givental}
A.~Givental, \emph{A mirror theorem for toric complete intersections}, in: Topological field theory, primitive forms and related topics (Kyoto, 1996), Progr. Math. \textbf{160}, Birkh\"auser, Boston, 1998, 141--175, arXiv:alg-geom/9701016.


\bibitem{HKTY}
S.~Hosono, A.~Klemm, S.~Theisen, S.-T.~Yau, \emph{Mirror symmetry, mirror map and applications to complete intersection
Calabi--Yau spaces}, Nucl. Phys. B \textbf{433} (1995), 501--552, arXiv:hep-th/9406055.

\end{thebibliography}
\end{document}